\let\@wraptoccontribs\wraptoccontribs
\title{Completing perfect complexes}
\author[Henning Krause]{Henning Krause}
\thanks{Tobias Barthel was supported by the the Danish National
  Research Foundation (DNRF92) and the European Union's Horizon 2020
  research and innovation programme under the Marie Sklodowska-Curie
  grant agreement No.~751794.}
\address{T.~B.: Max Planck Institut f\"ur Mathematik, Vivatsgasse 7, 53111 Bonn, Germany}
\email{tbarthel@mpim-bonn.mpg.de}
\address{B.~K.: Universit\'e Paris Diderot -- Paris~7, UFR de
  Math\'ematiques, Institut de Math\'ematiques de Jussieu--PRG, UMR
  7586 du CNRS, Case 7012, B\^atiment Sophie Germain, 75205 Paris
  Cedex 13, France}
\email{bernhard.keller@imj-prg.fr}
\address{H.~K.: Fakult\"at f\"ur Mathematik,
Universit\"at Bielefeld, 33501 Bielefeld, Germany}
\email{hkrause@math.uni-bielefeld.de}
\theoremstyle{plain}
\newtheorem{thm}{Theorem}[section]
\newtheorem{prop}[thm]{Proposition}
\newtheorem{lem}[thm]{Lemma} 
\newtheorem{cor}[thm]{Corollary}
\theoremstyle{definition}
\newtheorem{defn}[thm]{Definition}
\newtheorem{exm}[thm]{Example}
\theoremstyle{remark}
\newtheorem{rem}[thm]{Remark}
\numberwithin{equation}{section}
\newcommand{\Add}{\operatorname{Add}}
\newcommand{\art}{\operatorname{art}}
\newcommand{\Cau}{\operatorname{Cauch}}
\newcommand{\coh}{\operatorname{coh}}
\newcommand{\colim}{\operatorname*{colim}}
\newcommand{\End}{\operatorname{End}}
\newcommand{\Ext}{\operatorname{Ext}}
\newcommand{\fl}{\operatorname{fl}}
\newcommand{\Fun}{\operatorname{Fun}}
\newcommand{\hocolim}{\operatorname*{hocolim}}
\newcommand{\Hom}{\operatorname{Hom}}
\newcommand{\id}{\operatorname{id}}
\renewcommand{\Im}{\operatorname{Im}}
\newcommand{\Ind}{\operatorname{Ind}}
\newcommand{\inj}{\operatorname{inj}}
\newcommand{\Inj}{\operatorname{Inj}}
\newcommand{\Ker}{\operatorname{Ker}}
\newcommand{\Lex}{\operatorname{Lex}}
\renewcommand{\min}{\operatorname{min}}
\renewcommand{\mod}{\operatorname{mod}}
\newcommand{\Mod}{\operatorname{Mod}}
\newcommand{\Ph}{\operatorname{Ph}}
\newcommand{\proj}{\operatorname{proj}} 
\newcommand{\Qcoh}{\operatorname{Qcoh}}
\newcommand{\soc}{\operatorname{soc}}
\newcommand{\St}{\operatorname{St}}
\newcommand{\Thick}{\operatorname{Thick}}
\newcommand{\Ab}{\mathrm{Ab}}
\newcommand{\op}{\mathrm{op}}
\newcommand{\per}{\mathrm{per}}
\newcommand{\Set}{\mathrm{Set}}
\newcommand{\two}{\mathbb I}
\newcommand{\iso}{\xrightarrow{_\sim}}
\newcommand{\Lotimes}{\otimes^{\mathbf L}}
\newcommand{\lto}{\longrightarrow}
\newcommand{\smatrix}[1]{\left[\begin{smallmatrix}#1\end{smallmatrix}\right]}
\newcommand{\xto}{\xrightarrow}
\DeclareRobustCommand\longtwoheadrightarrow
\def\A{\mathcal A} 
\def\C{\mathcal C}
\def\D{\mathcal D} 
\def\E{\mathcal E} 
\def\F{\mathcal F} 
\def\G{\mathcal G}
\def\I{\mathcal I}
\def\M{\mathcal M}
\def\P{\mathcal P}
\def\calS{\mathcal S} 
\def\T{\mathcal T}
\def\X{\mathcal X}
\def\bfD{\mathbf D} 
\def\bfK{\mathbf K}
\def\bbN{\mathbb N}
\def\bbQ{\mathbb Q} 
\def\bbR{\mathbb R} 
\def\bbU{\mathbb U} 
\def\bbX{\mathbb X} 
\def\bbZ{\mathbb Z}
\def\a{\alpha}
\def\b{\beta}
\def\p{\phi}
\def\s{\sigma}
\def\t{\tau}
\def\Ga{\Gamma}
\def\La{\Lambda}
\def\Si{\Sigma}
\DeclareMathOperator{\fib}{fib}
\newcommand{\sgen}[4]{\overline{\langle {#1}\rangle}_{#2}^{#3,#4}}
\def\cO{\mathcal O}
\newcommand{\bfR}{\mathbf R}
\begin{document}

\begin{abstract}
  This note proposes a new method to complete a triangulated category,
  which is based on the notion of a Cauchy sequence. We apply this to
  categories of perfect complexes. It is shown that the bounded
  derived category of finitely presented modules over a right coherent
  ring is the completion of the category of perfect complexes. The
  result extends to non-affine noetherian schemes and gives rise to a
  direct construction of the singularity category. The parallel theory
  of completion for abelian categories is compatible with the
  completion of derived categories.

  There are three appendices. The first one by Tobias Barthel
  discusses the completion of perfect complexes for ring spectra. The
  second one by Tobias Barthel and Henning Krause refines for a
  separated noetherian scheme the description of the bounded derived
  category of coherent sheaves as a completion.  The final appendix by
  Bernhard Keller introduces the concept of a morphic enhancement for
  triangulated categories and provides a foundation for
  completing a triangulated category.
\end{abstract}

\keywords{Completion, Cauchy sequence, derived category, triangulated
  category, morphic enhancement, perfect complex, coherent ring,
  noetherian scheme, ring spectrum}

\subjclass[2010]{18E30 (primary); 14F05, 16E35, 55P42 (secondary)}

\dedicatory{Dedicated to the memory of Ragnar-Olaf Buchweitz.}

\date{October 30, 2019}
\maketitle

\section{Introduction}

This note proposes a new method to complete a triangulated category,
and we apply this to categories of perfect complexes
\cite{Il1966}. For any category $\C$, we introduce its \emph{sequential
  completion} $\widehat\C$, which is a categorical analogue of the
construction of the real numbers from the rationals via equivalence
classes of Cauchy sequences, following Cantor and M\'eray
\cite{Ca1872,Me1869}.

When a ring $\La$ is right coherent, then the category $\mod\La$ of
finitely presented modules is abelian and one can consider its bounded
derived category $\bfD^b(\mod\La)$, which contains the category of
perfect complexes $\bfD^\per(\La)$ as a full triangulated
subcategory. The following theorem describes $\bfD^b(\mod\La)$ as a
completion of $\bfD^\per(\La)$.

\begin{thm}\label{th:main}
  For a right coherent ring $\La$ there is a canonical triangle
  equivalence
  \begin{equation*}
    \widehat{\bfD^\per(\La)}^b\xto{\ _\sim\ }\bfD^b(\mod\La)
  \end{equation*}
which sends a Cauchy sequence in $\bfD^\per(\La)$ to its colimit.
\end{thm}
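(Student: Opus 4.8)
The plan is to analyse the functor $\Phi\colon (X_i)\mapsto\hocolim_i X_i$, computed in $\bfD(\Mod\La)$, and show it is a triangle equivalence by checking four things: that it lands in $\bfD^b(\mod\La)$, that it is exact, that it is dense, and that it is fully faithful (a dense, fully faithful exact functor between triangulated categories is an equivalence). That $\Phi$ takes values in $\bfD^b(\mod\La)$ is immediate from the Cauchy condition, since cohomology commutes with the sequential homotopy colimit: $H^j(\Phi X)=\colim_i H^j(X_i)$ stabilises to $H^j(X_i)$ for $i\gg0$, which is finitely presented because $X_i$ is perfect and $\La$ is right coherent, while boundedness is exactly the defining condition of $\widehat{\bfD^\per(\La)}^b$. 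Exactness of $\Phi$ will follow once the completion carries its triangulated structure, as a triangle of Cauchy sequences is assembled degreewise from triangles of perfect complexes and $\hocolim$ preserves triangles and shifts.

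For density I would start from $M\in\bfD^b(\mod\La)$. Since $\La$ is right coherent, every finitely presented module resolves by finitely generated projectives, so $M$ admits a quasi-isomorphism $P\iso M$ with $P$ a complex of finitely generated projective modules and $P^j=0$ for $j\gg0$. The brutal truncations $X_i:=\sigma^{\ges -i}P$, keeping the terms in degrees $\ges -i$, form an increasing chain of perfect subcomplexes of $P$ with $\bigcup_i X_i=P$; since the inclusions are termwise split, $\hocolim_i X_i\simeq P\simeq M$. The cofibre of $X_i\hookrightarrow X_{i+1}$ is the stalk complex $P^{-i-1}[i+1]$, concentrated in cohomological degree $-i-1$, so these cofibres drift off to $-\infty$: this is precisely the Cauchy condition, and $(X_i)$ is the required bounded Cauchy sequence with $\Phi(X_i)\simeq M$.

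Full faithfulness is the heart of the matter. Unwinding the definition of morphisms in the completion, for Cauchy sequences $X=(X_i)$ and $Y=(Y_m)$ one should have $\Hom_{\widehat{\bfD^\per(\La)}^b}(X,Y)=\lim_i\colim_m\Hom(X_i,Y_m)$. Because each $X_i$ is perfect, hence compact in $\bfD(\Mod\La)$, the inner colimit computes $\colim_m\Hom(X_i,Y_m)=\Hom(X_i,Y)$ with $Y:=\Phi Y$, so the source reduces to $\lim_i\Hom(X_i,Y)$. On the other side, the Milnor sequence for $\Phi X=\hocolim_i X_i$ reads
\begin{equation*}
0\lto {\lim_i}^1\Hom(X_i,Y[-1])\lto \Hom(\Phi X,Y)\lto \lim_i\Hom(X_i,Y)\lto 0,
\end{equation*}
so identifying $\Hom(\Phi X,\Phi Y)$ with $\Hom(X,Y)$ comes down exactly to the vanishing of the ${\lim}^1$ term.

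The step I expect to be the main obstacle is this ${\lim}^1$ vanishing, and here boundedness enters decisively. Writing $C_i:=\Cone(X_i\to X_{i+1})\cong P^{-i-1}[i+1]$ from the density step and using that $P^{-i-1}$ is projective, one computes $\Hom(C_i,Y[j])\cong\Hom_\La(P^{-i-1},H^{j-i-1}(Y))$. The connecting groups controlling the transition maps of the inverse system $i\mapsto\Hom(X_i,Y[-1])$ are those for $j\in\{-1,0\}$; since $Y\in\bfD^b(\mod\La)$ is bounded below, $H^{j-i-1}(Y)=0$ once $i$ is large, so these transition maps are isomorphisms for $i\gg0$. An eventually constant inverse system has vanishing ${\lim}^1$, whence $\Hom(\Phi X,\Phi Y)\iso\lim_i\Hom(X_i,Y)=\Hom(X,Y)$. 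Combined with density and exactness, this yields the triangle equivalence.
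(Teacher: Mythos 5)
Your strategy is in substance the paper's (your density step is Lemma~\ref{le:coherent-perf-cohom}, your truncation computation is Lemma~\ref{le:bounded}), but there is a genuine gap at the step you rightly call the heart of the matter. Full faithfulness quantifies over \emph{all} pairs of objects of $\widehat{\bfD^\per(\La)}^b$, i.e.\ over arbitrary bounded Cauchy sequences $X$, whereas your vanishing of ${\lim}^1_i\Hom(X_i,\Si^{-1}\Phi Y)$ rests on the identification $C_i=\Cone(X_i\to X_{i+1})\cong P^{-i-1}[i+1]$, which holds only for the truncation sequences manufactured in the density step. For an arbitrary bounded Cauchy sequence the $C_i$ are just perfect complexes, and the Cauchy condition does not control them in the way you need: it bounds maps \emph{into} the sequence from a fixed compact object (it gives $\Hom(C,C_i)=0$ for $i\gg0$ with $C$ fixed), while the ${\lim}^1$ term is governed by maps \emph{out} of the varying objects $C_i$ into the fixed, non-compact object $\Phi Y$. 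So as written you have proved full faithfulness of $\Phi$ only on truncation sequences. The same quantifier problem undercuts your exactness step: by Theorem~\ref{th:tria-completion} and Remark~\ref{re:bounded}, the triangulated structure on $\widehat{\bfD^\per(\La)}^b$ is available only after the class of \emph{all} bounded Cauchy sequences is known to be phantomless --- exactly the ${\lim}^1$ statement still at issue --- so it cannot be taken as given at that point.

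The missing idea is that the obstruction is intrinsic rather than presentation-dependent. By Lemma~\ref{le:phantom} (Milnor's sequence), for \emph{any} way of writing an object $U$ of $\bfD(\La)$ as a homotopy colimit of a sequence $(X_i)$ of perfect complexes one has ${\lim}^1_i\Hom(X_i,Z)\cong\Ph(U,\Si Z)$, and the right-hand side does not depend on the chosen sequence. Since your first step shows $\Phi X\in\bfD^b(\mod\La)$ for every bounded Cauchy sequence $X$, you may compute $\Ph(\Phi X,\Phi Y)$ using the truncation presentation of $\Phi X$, where your computation applies and gives zero; feeding this back into the Milnor sequence for the original sequence $(X_i)$ yields $\Hom(\Phi X,\Phi Y)\iso\lim_i\Hom(X_i,\Phi Y)=\Hom(X,Y)$ in general, and simultaneously establishes the phantomless condition needed for the triangulated structure. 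This transfer --- Lemma~\ref{le:phantom} combined with Lemma~\ref{le:phantomless} (equivalently Lemma~\ref{le:yoneda}) --- is precisely how the paper organises the argument; with it inserted, your proof closes up and becomes essentially the paper's proof of Theorem~\ref{th:bounded-derived}.
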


The description of $\bfD^b(\mod\La)$ as a completion extends to
non-affine schemes. Thus for a noetherian scheme $\bbX$ there is a
canonical triangle equivalence
\[\widehat{\bfD^\per(\bbX)}^b\xto{\ _\sim\ }\bfD^b(\coh\bbX).\]
In particular, this provides a direct construction of the singularity
category (in the sense of Buchweitz and Orlov \cite{Bu1987,Or2004}) as
the Verdier quotient
\[\frac{\widehat{\bfD^\per(\bbX)}^b}{\bfD^\per(\bbX)}.\]

The completion $\widehat\C$ of a category $\C$ comes
with an embedding $\C\to \widehat\C$ so that the objects in
$\widehat\C$ are precisely the colimits of Cauchy sequences in $\C$,
and $\widehat\C$ identifies with a full subcategory of the
ind-completion of $\C$ in the sense of Grothendieck and Verdier
\cite{GV}.

When $\C$ is triangulated, there is a natural finiteness condition
such that $\widehat\C$ inherits a triangulated structure with exact
triangles given as colimits of Cauchy sequences of exact triangles in
$\C$. This involves the notion of a phantom morphism and Milnor's
exact sequence \cite{Mi1962}. In order to explain this, let us assume
for simplicity that $\C$ identifies with the full subcategory of
compact objects of a compactly generated triangulated category $\T$.
Fix a class $\X$ of sequences $X_0\to X_1\to X_2\to\cdots$ in $\C$
that is stable under suspensions. We consider their homotopy colimits
and have
\[\Ph(\hocolim_iX_i, \hocolim_jY_j)=0\quad\text{for all}\quad X,Y\in\X\]
if and only if
\[ \sideset{}{^{1}}\lim_i\colim_j\Hom(X_i,Y_j)=0\quad\text{for
    all}\quad X,Y\in\X,\] where $\Ph(U,V)$ denotes the set of phantom
morphisms $U\to V$. It is this finiteness condition which is satisfied
for categories of perfect complexes, and it enables us to establish a
triangulated structure for the completion of $\C$ with respect to
$\X$.

The idea of completing a triangulated category $\C$ is not new; the
method is always to identify the completion $\D$ with a category of
certain cohomological functors $\C^\op\to \Ab$. Note that the category
of all cohomological functors is equivalent to the ind-completion of
$\C$.  In most cases, $\C$ identifies with the category of compact
objects of a compactly generated triangulated category $\T$, and $\D$
is another triangulated subcategory of $\T$. Let us mention the paper
of Neeman \cite{Ne1992a} that addresses the question when a category
of cohomological functors carries the structure of a triangulated
category. In \cite{Ro2008}, Rouquier identifies various natural
choices of cohomological functors $\C^\op\to \Ab$. The recent work of
Neeman \cite{Ne2018,Ne2018b} employs the notion of `approximability';
it is crucial for understanding the case of non-affine schemes and
recommended as an alternative approach via Cauchy sequences.

We also include a discussion of completions for abelian
categories. Again, some finiteness condition is needed so that the
completion is abelian. For instance, we show for a noetherian algebra
$\La$ over a complete local ring that the completion of the category
$\fl\La$ of finite length modules identifies with the category of
artinian $\La$-modules. Using Matlis duality, this yields for
$\Ga=\La^\op$ triangle equivalences
\[\bfD^b(\mod\La)^\op\xto{\ _\sim\ } \bfD^b(\widehat{\fl\Ga})\xto{\
    _\sim\ }\widehat{\bfD^b(\fl\Ga)}^b.\]

This paper has three appendices. The first one by Tobias Barthel
discusses completions for stable homotopy categories. In particular,
we see that Theorem~\ref{th:main} generalises to ring spectra.

The second one by Tobias Barthel and Henning Krause refines for a
separated noetherian scheme the description of the bounded derived category of
coherent sheaves as a completion. It is shown that the objects are
precisely the colimits of Cauchy sequences of perfect complexes that
satisfy an intrinsic boundedness condition.

In the final appendix, Bernhard Keller introduces the notion of a
\emph{morphic enhancement} of a triangulated category, following
\cite{Ke1991}. This allows us to capture the notion of standard
triangle and of coherent morphism between standard triangles,
generalising analogous approaches via stable model categories or
stable derivators. Morphic enhancements provide a setting for turning
a completion into a triangulated category. In fact, we see that in
Theorem~\ref{th:main} the completion of the morphic enhancement of
$\bfD^\per(\La)$ identifies with the morphic enhancement of
$\bfD^b(\mod\La)$.

After completion of this work, Neeman published a survey
\cite{Ne2019} which discusses metrics in triangulated categories,
following work of Lawvere from the 1970s. Completing with respect to
such metrics yields an alternative method of completing triangulated
categories; it does not depend on an enhancement and we recommend a
comparison.

\subsection*{Acknowledgement}

This work benefited from discussions at an Oberwolfach workshop in
March 2018. Following the spirit of this workshop, it is intended as a
contribution of potential common interest to stable homotopy theory,
representation theory, and algebraic geometry.

I wish to thank Amnon Neeman for various helpful comments on this
work, in particular for drawing my attention to the related notion of
`approximability', for providing the proof of
Lemma~\ref{le:scheme-bounded}, and for pointing out problems in some
previous versions of this manuscript. Also, the interest and comments
of Greg Stevenson are very much appreciated.

I am grateful to Tobias Barthel and Bernhard Keller for many
stimulating comments and for agreeing to include their ideas in form
of an appendix.

Finally, I wish to thank an anonymous referee for various helpul comments
concerning the exposition.

\section{The sequential completion of a category}

Let $\bbN=\{0,1,2,\ldots\}$ denote the set of natural numbers, viewed
as a category with a single morphism $i\to j$ if $i\le j$.

Now fix a category $\C$ and consider the category $\Fun(\bbN,\C)$ of
functors $\bbN\to\C$.  An object $X$ is nothing but a sequence of
morphisms $X_0\to X_1\to X_2\to \cdots$ in $\C$, and the morphisms
between functors are by definition the natural transformations.  We
call $X$ a \emph{Cauchy sequence} if for all $C\in\C$ the induced map
$\Hom(C,X_i)\to\Hom(C,X_{i+1})$ is invertible for $i\gg 0$. This
means:
\[\forall\; C\in\C\;\; \exists\; n_C\in\bbN\;\; \forall\; j\ge i\ge
  n_C\;\; \Hom(C,X_i)\xto{_\sim}\Hom(C,X_j).\]

Let $\Cau(\bbN,\C)$ denote the full subcategory consisting of all
Cauchy sequences.  A morphism $X\to Y$ is \emph{eventually invertible}
if for all $C\in\C$ the induced map $\Hom(C,X_i)\to\Hom(C,Y_i)$ is
invertible for $i\gg 0$. This means:
\[\forall\; C\in\C\;\; \exists\; n_C\in\bbN\;\; \forall\;  i\ge
  n_C\;\; \Hom(C,X_i)\xto{_\sim}\Hom(C,Y_i).\] Let $S$ denote the class
of eventually invertible morphisms in $\Cau(\bbN,\C)$.

\begin{defn}
  The \emph{sequential completion} of $\C$ is the category
  \[\widehat\C:= \Cau(\bbN,\C)[S^{-1}]\] that is obtained from the
  Cauchy sequences by formally inverting all eventually invertible
  morphisms, together with the \emph{canonical functor}
  $\C\to\widehat\C$ that sends an object $X$ in $\C$ to the constant
  sequence $X\xto{\id} X\xto{\id} \cdots$.
\end{defn}

A  sequence $X\colon\bbN\to\C$ induces a functor
\[\widetilde X\colon\C^\op\lto\Set,\quad C\mapsto\colim_i\Hom(C,X_i),\]
and this yields a functor
\[\widehat\C\lto\Fun(\C^\op,\Set),\quad X\mapsto\widetilde X,\]
because the assignment $X\mapsto\widetilde X$ maps eventually invertible
morphisms to isomorphisms.  We will show that this functor is fully
faithful.

Let $\D$ be a category and $S$ a class of morphisms in $\D$.  There is
an explicit description of the localisation $\D[S^{-1}]$ provided that
the class $S$ \emph{admits a calculus of left fractions} in the sense
of \cite{GZ1967}, that is, the following conditions are satisfied:
\begin{enumerate}
\item[(LF1)] The identity morphism of each object is in $S$. The
  composition of two morphisms in $S$ is again in $S$.
\item[(LF2)] Each pair of morphisms $X'\xleftarrow{\s}X\to Y$ with
  $\s\in S$ can be completed to a commutative diagram 
  \[
\begin{tikzcd}
X\arrow{r}\arrow[swap]{d}{\s}&
Y\arrow{d}{\t}\\
X'\arrow{r}&Y'
\end{tikzcd}
\]
such that $\t\in S$.
\item[(LF3)] Let $\a,\b\colon X\to Y$ be morphisms. If there is
  $\s\colon X'\to X$ in $S$ such that $\a\s=\b\s$, then there is
  $\t\colon Y\to Y'$ in $S$ such that $\t\a=\t\b$.
\end{enumerate}

If $S$ admits a calculus of left fractions, then the morphisms in
$\D[S^{-1}]$ are of the form $\s^{-1}\a$ given by a pair of morphisms
$X\stackrel{\a}\to Y'\stackrel{\s}\leftarrow Y$ in $\D$ with
$\s\in S$, where we identify a morphism in $\D$ with its image under
the canonical functor $\D\to \D[S^{-1}]$.  For pairs $(\a_1,\s_1)$ and
$(\a_2,\s_2)$ we have $\s_1^{-1}\a_1=\s_2^{-1}\a_2$ in $\D[S^{-1}]$ if
and only if there exists a commutative diagram
\[
\begin{tikzcd}
&Y_1\arrow{d}\\ X\arrow{r}{\a}\arrow{ru}{\a_1}\arrow[swap]{rd}{\a_2}&Y'&
Y\arrow[swap]{lu}{\s_1}\arrow{ld}{\s_2}\arrow[swap]{l}{\s}\\
&Y_2\arrow{u}
\end{tikzcd}
\]
with $\s$ in
$S$; see  \cite{GZ1967}.

\begin{lem}\label{le:LF}
  The eventually invertible morphisms in $\Cau(\bbN,\C)$ admit a
  calculus of left fractions.
\end{lem}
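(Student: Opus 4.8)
The plan is to verify the three axioms (LF1)--(LF3) for a calculus of left fractions, with $S$ the class of eventually invertible morphisms. Two elementary observations will be used repeatedly. First, for a monotone map $m\colon\bbN\to\bbN$ with $m(j)\ge j$ and a Cauchy sequence $X$, the reindexed sequence $X^\flat$ given by $X^\flat_j=X_{m(j)}$ (with transitions induced from those of $X$) is again Cauchy, and the canonical map $\kappa_X\colon X\to X^\flat$ assembled from the transition morphisms of $X$ is eventually invertible: for each $C$ the map $\Hom(C,X_j)\to\Hom(C,X_{m(j)})$ is a composite of isomorphisms once $j$ is large. Second, reindexing along a fixed $m$ is a functor on $\Fun(\bbN,\C)$, so it preserves composition and identities. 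Axiom (LF1) is then immediate: identities are eventually invertible, and for a composite one takes, for each $C$, the maximum of the two thresholds.

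The heart of the matter is (LF2), and I expect the main obstacle to be that $\C$ need not admit pushouts, so the completing square cannot be formed by a naive levelwise colimit. The key device is a \emph{partial inverse up to reindexing} for an eventually invertible $\sigma$. Given $\sigma\colon X\to X'$ in $S$ and $f\colon X\to Y$, I first choose increasingly a monotone $m$ with $m(j)\ge j$ large enough that $\Hom(C,X_i)\to\Hom(C,X'_i)$ is invertible for all $i\ge m(j)$ whenever $C\in\{X_j,X'_j\}$. Using invertibility at $C=X'_j$, I lift the transition $X'_j\to X'_{m(j)}$ uniquely through $\sigma_{m(j)}$ to a morphism $\rho_j\colon X'_j\to X_{m(j)}$ with $\sigma_{m(j)}\rho_j$ equal to that transition. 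A uniqueness argument — comparing the two candidate squares after applying $\sigma$ and using that $\Hom(X'_j,X_{m(k)})\to\Hom(X'_j,X'_{m(k)})$ is injective for $k\ge j$ — shows the $\rho_j$ are compatible with transitions, hence assemble into a morphism $\rho\colon X'\to X^\flat$ satisfying $\rho\sigma=\kappa_X$ and $\sigma^\flat\rho=\kappa_{X'}$.

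With $\rho$ in hand, (LF2) is completed formally. I set $Y'=Y^\flat$, take $\tau=\kappa_Y\in S$, and define $g=f^\flat\rho\colon X'\to Y'$, where $f^\flat$ is $f$ reindexed along the same $m$. The required commutativity $\tau f=g\sigma$ then reduces to the naturality identity $f^\flat\kappa_X=\kappa_Y f$, since $g\sigma=f^\flat\rho\sigma=f^\flat\kappa_X$. I should also record, routinely, that $X^\flat$, $Y^\flat$ and the maps $\rho,g,\tau$ genuinely live in $\Cau(\bbN,\C)$, which follows from the first observation above.

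Finally, (LF3) follows from the same backward map. Given $\alpha,\beta\colon X\to Y$ and $\sigma\colon X'\to X$ in $S$ with $\alpha\sigma=\beta\sigma$, I apply reindexing and the partial inverse $\rho\colon X\to X'^\flat$ with $\sigma^\flat\rho=\kappa_X$: functoriality gives $\alpha^\flat\sigma^\flat=\beta^\flat\sigma^\flat$, and post-composing with $\rho$ yields $\alpha^\flat\kappa_X=\beta^\flat\kappa_X$. Combining this with the naturality identities $\alpha^\flat\kappa_X=\kappa_Y\alpha$ and $\beta^\flat\kappa_X=\kappa_Y\beta$ gives $\kappa_Y\alpha=\kappa_Y\beta$, so $\tau=\kappa_Y\in S$ does the job. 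The only delicate points throughout are bookkeeping ones — choosing the thresholds $m(j)$ increasingly so that all needed invertibilities hold at once, and confirming naturality of the lifts $\rho_j$ — and both are controlled by the uniqueness of lifts through the eventually invertible $\sigma$.
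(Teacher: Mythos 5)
Your proof is correct, and its skeleton matches the paper's: both arguments reduce everything to reindexing along a cofinal map (so that the canonical morphism $\kappa_X\colon X\to X^\flat$ is eventually invertible), to a partial inverse of $\sigma$ after reindexing, and then close (LF2) with exactly the same square, using the naturality identity $f^\flat\kappa_X=\kappa_Y f$. Where you genuinely diverge is in how the partial inverse is produced, and in (LF3). The paper routes both through the colimit presheaves $\widetilde X=\colim_i\Hom(-,X_i)$: its Lemma~\ref{le:cofinal} states that a morphism $\widetilde X\to\widetilde Y$ is realized by an honest morphism $X\to Y_f$ after cofinal reindexing, and that $\widetilde\alpha=\widetilde\beta$ forces $f_Y\alpha=f_Y\beta$; the partial inverse $\sigma'$ with $\sigma'\sigma=f_X$ is then obtained by applying part (1) to ${\widetilde\sigma}^{-1}$, and (LF3) by applying part (2) to $\widetilde\alpha=\widetilde\beta$. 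You instead construct $\rho_j$ as the \emph{unique} lift of the transition morphism $X'_j\to X'_{m(j)}$ through the bijection $\Hom(X'_j,X_{m(j)})\xto{_\sim}\Hom(X'_j,X'_{m(j)})$, so that compatibility with transitions and the identities $\rho\sigma=\kappa_X$ and $\sigma^\flat\rho=\kappa_{X'}$ follow from injectivity rather than from making non-canonical choices consistently, and you then settle (LF3) by reusing $\rho$ instead of invoking any statement about presheaves. What each approach buys: yours is self-contained and canonical, exploiting that an eventually invertible $\sigma$ induces actual bijections on the relevant Hom sets (so there is nothing to choose), whereas the paper's Lemma~\ref{le:cofinal}, though it does require consistent choices, is formulated for arbitrary sequences and morphisms between colimit presheaves and is reused later (in Proposition~\ref{pr:completion}) to prove that $\widehat\C\to\Fun(\C^\op,\Set)$ is fully faithful --- a payoff your purely local argument does not directly provide.
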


We need some preparations for the proof of this lemma.  Given functors
$f\colon\bbN\to\bbN$ and $X\colon \bbN\to\C$, let $X_f$ denote the
composite $X\circ f$. Call $f$ \emph{cofinal} if $n\le f(n)$ for all
$n\in\bbN$. In this case there is a natural morphism $f_X\colon X\to X_f$.

A straightforward computation of filtered colimits in $\Set$ yields
the following.

\begin{lem}\label{le:cofinal}
   Let $X,Y\colon \bbN\to \C$ be functors.
\begin{enumerate}
\item Given a morphism $\p\colon \widetilde X\to \widetilde Y$, there
  exists a cofinal $f\colon \bbN\to\bbN$ and a morphism
  $\a\colon X\to Y_f$ such that $\widetilde{f_Y}\p=\widetilde\a$.
\item Given morphisms $\a,\b\colon X\to Y$ such that
  $\widetilde\a=\widetilde\b$, there exists a cofinal
  $f\colon \bbN\to\bbN$ such $f_Y\a=f_Y\b$.
\end{enumerate}  
\end{lem}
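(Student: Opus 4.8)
\textbf{Proof plan for Lemma~\ref{le:cofinal}.} The statement is really about filtered colimits of sets over the directed poset $\bbN$, and the plan is to make the Yoneda dictionary explicit and then diagonalise. The starting point is that colimits in $\Fun(\C^\op,\Set)$ are computed pointwise, so $\widetilde X=\colim_i\Hom(-,X_i)$ is the filtered colimit of the representables $\Hom(-,X_i)$; by the Yoneda lemma a natural transformation $\p\colon\widetilde X\to\widetilde Y$ is therefore the same datum as a family of elements $\p_i\in\widetilde Y(X_i)=\colim_j\Hom(X_i,Y_j)$, compatible under the transition maps of $X$, where $\p_i$ is the image of the class of $\id_{X_i}$ under $\p_{X_i}$. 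I will use twice the defining features of a filtered colimit of sets: every element of $\colim_j\Hom(X_i,Y_j)$ is represented by an actual morphism $X_i\to Y_m$, and two representatives $X_i\to Y_m$, $X_i\to Y_{m'}$ give the same element precisely when they become equal after composition with suitable $Y_m\to Y_k$ and $Y_{m'}\to Y_k$.

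I would dispose of part (2) first, as a warm-up. Evaluating $\widetilde\a=\widetilde\b$ at the class of $\id_{X_i}$ shows that $\a_i$ and $\b_i$, viewed in $\colim_j\Hom(X_i,Y_j)$, represent the same element; hence for each $i$ there is $n_i\ge i$ with $(Y_i\to Y_{n_i})\a_i=(Y_i\to Y_{n_i})\b_i$. Setting $f(i):=\max(i,n_0,\dots,n_i)$ produces a monotone, cofinal $f$, and post-composing the above equality with $Y_{n_i}\to Y_{f(i)}$ gives $(f_Y)_i\a_i=(f_Y)_i\b_i$ for every $i$, i.e.\ $f_Y\a=f_Y\b$.

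For part (1) I would build the cofinal $f$ and the components $\a_i\colon X_i\to Y_{f(i)}$ by induction on $i$, arranging that $\a_i$ represents $\p_i$ at each stage (the base case being any representative of $\p_0$). Suppose $\a_i\colon X_i\to Y_{f(i)}$ representing $\p_i$ is given, and choose any representative $X_{i+1}\to Y_m$ of $\p_{i+1}$. Naturality of $\p$ along $X_i\to X_{i+1}$ says that this representative, restricted along $X_i\to X_{i+1}$, again represents $\p_i$, hence agrees in the colimit with $\a_i$. Thus there is $k>f(i)$ with $k\ge\max(i+1,m)$ making the two honest composites $X_i\to X_{i+1}\to Y_m\to Y_k$ and $X_i\xrightarrow{\a_i}Y_{f(i)}\to Y_k$ equal; I set $f(i+1):=k$ and take $\a_{i+1}$ to be $X_{i+1}\to Y_m\to Y_{f(i+1)}$. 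By construction $f$ is strictly increasing and cofinal, the squares making $\a\colon X\to Y_f$ a morphism of sequences commute on the nose, and each $\a_i$ still represents $\p_i$; the last property says exactly $\widetilde\a=\widetilde{f_Y}\,\p$ under the canonical identification $\widetilde{Y_f}=\widetilde Y$ furnished by cofinality of $f$.

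The only real obstacle is the interleaving in part (1): a natural transformation $\p$ records its components $\p_i$ only up to the colimit, so the naturality squares for $\a$ cannot be expected to commute before reindexing, and the inductive enlargement of $f$ is precisely the device that upgrades colimit-level commutativity to on-the-nose commutativity. Part (2) and the bookkeeping needed to keep $f$ monotone and cofinal are then routine.
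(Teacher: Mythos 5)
Your proof is correct and takes essentially the same route as the paper's: both parts are handled by inductively choosing honest representatives $\a_n\colon X_n\to Y_{f(n)}$ of the colimit classes determined by $\p$ (resp.\ levels at which $\a_n,\b_n$ agree), enlarging $f(n)\ge\max(n,f(n-1))$ at each step to keep $f$ cofinal. Your explicit interleaving via naturality of $\p$ is precisely the ``make this choice consistent so that the $\a_n$ yield a morphism $X\to Y_f$'' that the paper's proof leaves implicit.
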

\begin{proof}
  (1) Fix $n\in\bbN$ and suppose $f$ has been defined for all
  $m<n$. There exists $n'\ge \max(n, f(n-1))$ and  $\a_n\colon X_n\to
  Y_{n'}$ such that the composite
  \[\Hom(X_{n},X_{n})\to\colim_i\Hom(X_n,X_i)\xto{\p}\colim_i\Hom(X_n,Y_i)\]
  maps $\id_{X_n}$ to the image of $\a_n$ under
  $\Hom(X_{n},Y_{n'})\to\colim_i\Hom(X_n,Y_i)$. We set  $f(n)=n'$ and
  can make this choice consistent such that the $\a_n$
  yield a morphism $X\to Y_f$.

  (2) Fix $n\in\bbN$ and suppose $f$ has been defined for all
  $m<n$. The fact that $\a$ and $\b$ induce the same map
  $\colim_i\Hom(X_n,X_i)\to\colim_i\Hom(X_n,Y_i)$ yields $n'\ge \max(n,f(n-1))$
  such that $\Hom(X_n,Y_n)\to \Hom(X_n,Y_{n'})$ maps $\a_n$ and $\b_n$
  to the same element. Then set $f(n)=n'$.
\end{proof}

Let $S$ denote the class of eventually invertible morphisms. If
$X\colon\bbN\to\C$ is Cauchy and $f\colon\bbN\to\bbN$ is cofinal, then
$X_f$ is Cauchy and the canonical morphism $f_X\colon X\to X_f$ is in
$S$. Moreover, for any $\s\colon X\to Y$ in $S$ there exists a cofinal
$f\colon\bbN\to\bbN$ and a morphism $\s'\colon Y\to X_f$ such that
$\s'\s=f_X$. This follows by applying Lemma~\ref{le:cofinal} to ${\widetilde\s}^{-1}$.

\begin{proof}[Proof of Lemma~\ref{le:LF}]
  The condition (LF1) is clear.  To check (LF2) fix a pair of
  morphisms $X'\xleftarrow{\s}X\xto{\a} Y$ with $\s\in S$. Choose
  $\s'\colon X'\to X_f$ such that $\s'\s=f_X$. Then we obtain the
  following commutative square with $f_Y\in S$.
\[\begin{tikzcd}
X\arrow{rr}{\a}\arrow[swap]{d}{\s}&&
Y\arrow{d}{f_Y}\\
X'\arrow{r}{\s'}&X_f\arrow{r}{\a_f}&Y_f
\end{tikzcd}
\]

To check (LF3) fix a pair of morphisms $\a,\b\colon X\to Y$. Let
$\s\colon X'\to X$ in $S$ such that $\a\s=\b\s$. This implies
$\widetilde\a=\widetilde\b$, and it follows from Lemma~\ref{le:cofinal}
that there exists a cofinal $f\colon\bbN\to\bbN$ such that
$f_Y\a=f_Y\b$.  Note that $f_Y\in S$.
\end{proof}

\begin{prop}\label{pr:completion}
  The canonical functor $\widehat\C\to\Fun(\C^\op,\Set)$ is fully
  faithful; it identifies $\widehat\C$ with the colimits of
  sequences of representable functors that correspond to Cauchy
  sequences in $\C$. Also, the canonical functor
  $\C\to\widehat\C$ is fully faithful.
\end{prop}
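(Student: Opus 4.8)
The plan is to prove Proposition~\ref{pr:completion} by analyzing the functor $\widehat\C\to\Fun(\C^\op,\Set)$, $X\mapsto\widetilde X$, using the calculus of left fractions established in Lemma~\ref{le:LF}. First I would recall that by Lemma~\ref{le:LF} every morphism in $\widehat\C=\Cau(\bbN,\C)[S^{-1}]$ can be written as a left fraction $\s^{-1}\a$ with $\s\in S$, and two such fractions agree exactly when they can be equalized by a further morphism in $S$. The key observation is that applying $X\mapsto\widetilde X$ sends morphisms in $S$ to isomorphisms (since a filtered colimit of isomorphisms-eventually is an isomorphism), so the functor is well-defined on the localisation; this was already noted in the excerpt.

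For \emph{faithfulness and fullness}, I would combine this left-fraction description with Lemma~\ref{le:cofinal}. For fullness, given a natural transformation $\p\colon\widetilde X\to\widetilde Y$, part~(1) of Lemma~\ref{le:cofinal} produces a cofinal $f$ and a genuine morphism $\a\colon X\to Y_f$ of sequences with $\widetilde{f_Y}\,\p=\widetilde\a$; since $f_Y\colon Y\to Y_f$ lies in $S$, the fraction $f_Y^{-1}\a$ is a morphism in $\widehat\C$ whose image is $\widetilde{f_Y}^{-1}\widetilde\a=\p$. For faithfulness, I would take a morphism $f_Y^{-1}\a$ in $\widehat\C$ mapping to zero (more precisely, two fractions with equal image), deduce $\widetilde\a=\widetilde\b$ for the relevant representatives, and invoke part~(2) of Lemma~\ref{le:cofinal} to find a cofinal $g$ with $g_Y\a=g_Y\b$; since $g_Y\in S$ this forces the two fractions to coincide already in $\widehat\C$. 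Thus the functor is fully faithful.

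The identification of the essential image is then almost formal: the functor $\widetilde X=\colim_i\Hom(-,X_i)$ is visibly the colimit in $\Fun(\C^\op,\Set)$ of the representable functors $\Hom(-,X_i)$ along the Cauchy sequence $X$, so the image consists precisely of such colimits. Finally, for the composite $\C\to\widehat\C\to\Fun(\C^\op,\Set)$: the constant sequence on $C$ maps to $\colim_i\Hom(-,C)=\Hom(-,C)$, so this composite is the Yoneda embedding, which is fully faithful; since the second functor is fully faithful, the first functor $\C\to\widehat\C$ must be fully faithful as well.

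The main obstacle I anticipate is bookkeeping in the faithfulness argument: a morphism in $\widehat\C$ is an equivalence class of fractions, so to show the functor is injective on morphisms I must start from the \emph{diagram} witnessing equality of images in $\Fun(\C^\op,\Set)$, bring both fractions to a common denominator (using (LF2) and the cofinal reindexing discussed just before the proof of Lemma~\ref{le:LF}, which lets me replace any $\s\in S$ by some $f_X$), and only then apply Lemma~\ref{le:cofinal}(2). The technical point is that cofinal reindexings $f_X\colon X\to X_f$ are simultaneously the denominators supplied by the calculus of fractions \emph{and} the maps sent to identities after applying $\widetilde{(-)}$, so the two descriptions of equality---equality of fractions and equality after $\widetilde{(-)}$---match up cleanly. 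Once this alignment is set up, each verification reduces to the already-proved Lemma~\ref{le:cofinal}.
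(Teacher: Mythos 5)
Your proposal is correct and follows essentially the same route as the paper's proof: fullness via Lemma~\ref{le:cofinal}(1), faithfulness by reducing two fractions to a common denominator with (LF2) and then applying Lemma~\ref{le:cofinal}(2), the essential image via pointwise colimits of representables, and full faithfulness of $\C\to\widehat\C$ from Yoneda's lemma composed with the just-proven embedding. The bookkeeping issue you flag (aligning equality of fractions with equality after applying $\widetilde{(-)}$) is precisely the step the paper handles the same way.
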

\begin{proof}
  We use the fact that the class $S$ of eventually invertible
  morphisms in $\Cau(\bbN,\C)$ admits a calculus of left
  fractions. Then every morphism in $\widehat\C$ is of the form
  $\s^{-1}\a$ given by a pair of morphisms
  $X\xto{\a}Y'\xleftarrow{\s} Y$ in $\Cau(\bbN,\C)$ with $\s\in S$.
  
  Fix Cauchy sequences $X,Y\colon\bbN\to\C$. We need to show that the
  canonical map
  \[\Hom(X,Y)\lto \Hom(\widetilde X,\widetilde Y),\quad
    \s^{-1}\a\mapsto\widetilde\s^{-1}\widetilde\a,\] is a bijection.  The
  map is surjective, because Lemma~\ref{le:cofinal} yields for any
  morphism $\p\colon \widetilde X\to\widetilde Y$ a cofinal
  $f\colon\bbN\to\bbN$ and $\a\colon X\to Y_f$ such that
  $\p=\widetilde{f_Y}^{-1}\widetilde\a$. Now fix a pair of morphisms
  $\s_1^{-1}\a_1$ and $\s_2^{-1}\a_2$ in $\widehat\C$ such that
  $\widetilde{\s_1}^{-1}\widetilde{\a_1}=\widetilde{\s_2}^{-1}\widetilde{\a_2}$. Use
  (LF2) to complete $\s_1$ and $\s_2$ to a commutative diagram
 \[
\begin{tikzcd}
  Y\arrow{r}{\s_2}\arrow[swap]{d}{\s_1}&
  Y''\arrow{d}{\t_2}\\
  Y'\arrow{r}{\t_1}&Z
\end{tikzcd}
\]  
with $\t_1,\t_2\in S$. Then we have
$\widetilde{\t_1\a_1}=\widetilde{\t_2\a_2}$ and there exists a cofinal
$f\colon\bbN\to\bbN$ such that $f_Z\t_1\a_1=f_Z\t_2\a_2$. This implies
$\s_1^{-1}\a_1=\s_2^{-1}\a_2$ in $\widehat\C$.

  Colimits in $\Fun(\C^\op,\Set)$ are computed `pointwise'. Thus for
  $X$ in $\Fun(\bbN,\C)$ the functor $\widetilde X$ is the colimit of
  the sequence \[\Hom(-,X_0)\to \Hom(-,X_1)\to \Hom(-,X_2)\to\cdots\]
  in $\Fun(\C^\op,\Set)$. It follows that $\widehat\C$ identifies with
  the colimits of sequences of representable functors that correspond to Cauchy
  sequences in $\C$.

  Finally, the canonical functor $\C\to\widehat\C$ is fully faithful,
  since the composition with $\widehat\C\to\Fun(\C^\op,\Set)$ is fully
  faithful by Yoneda's lemma.
\end{proof}

\begin{cor}\label{co:completion}
   For $X,Y\in\widehat\C$ we have a natural bijection
  \[\Hom(X,Y)\xto{_\sim}\lim_i\colim_j\Hom(X_i,Y_j).\]
\end{cor}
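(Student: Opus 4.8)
The plan is to deduce this directly from Proposition~\ref{pr:completion}, which embeds $\widehat\C$ fully faithfully into $\Fun(\C^\op,\Set)$ via $X\mapsto\widetilde X$ and identifies $\widetilde X$ with the pointwise colimit of the representable functors $\Hom(-,X_i)$. Since the embedding is fully faithful, we have $\Hom_{\widehat\C}(X,Y)\cong\Hom_{\Fun(\C^\op,\Set)}(\widetilde X,\widetilde Y)$, so it suffices to compute the latter morphism set in the functor category.

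First I would rewrite the first argument as a colimit and pull it out, converting the colimit into an inverse limit:
\[\Hom(\widetilde X,\widetilde Y)=\Hom\bigl(\colim_i\Hom(-,X_i),\widetilde Y\bigr)\cong\lim_i\Hom\bigl(\Hom(-,X_i),\widetilde Y\bigr).\]
Next I would apply Yoneda's lemma to each term, using the very definition of $\widetilde Y$:
\[\Hom\bigl(\Hom(-,X_i),\widetilde Y\bigr)\cong\widetilde Y(X_i)=\colim_j\Hom(X_i,Y_j).\]
Combining these two identifications yields $\Hom(\widetilde X,\widetilde Y)\cong\lim_i\colim_j\Hom(X_i,Y_j)$, which is the asserted bijection, and one reads off that it agrees with the canonical map.

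The only point needing attention is naturality in the index $i$: one must verify that the Yoneda isomorphisms are compatible with the structure maps $\Hom(-,X_i)\to\Hom(-,X_{i+1})$, so that the inverse limit is the correct one and the composite coincides with the canonical map induced by restriction along $X_i\to X_{i+1}$. This is routine once the identifications are tracked carefully, and I expect no genuine obstacle; the entire statement is essentially a formal consequence of full faithfulness together with the interplay of $\Hom$, colimits, and the Yoneda embedding.
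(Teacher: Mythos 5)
Your proposal is correct and is essentially identical to the paper's own proof: both combine the full faithfulness from Proposition~\ref{pr:completion} with the observation that $\widetilde X=\colim_i\Hom(-,X_i)$, pull the colimit in the first argument out as a limit, and apply Yoneda's lemma termwise to obtain $\lim_i\colim_j\Hom(X_i,Y_j)$. The naturality check you flag is indeed routine and is left implicit in the paper as well.
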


\begin{proof}
Combining  Proposition~\ref{pr:completion} and Yoneda's lemma, we have
\begin{align*}
  \Hom(X,Y)&\cong\Hom(\colim_i\Hom(-,X_i), \colim_j\Hom(-,Y_j))\\
           &\cong \lim_i\Hom(\Hom(-,X_i), \colim_j\Hom(-,Y_j))\\
           &\cong \lim_i\colim_j\Hom(X_i,Y_j).\qedhere
\end{align*}  
\end{proof}

Call $\C$ \emph{sequentially complete} if every Cauchy sequence
in $\C$ has a colimit in $\C$. Clearly, $\C$ is sequentially complete if and only
if the canonical functor $\C\to\widehat\C$ is an equivalence.
We do not know whether $\widehat\C$ is always sequentially complete.

\begin{rem}\label{re:Ind}
  From Proposition~\ref{pr:completion} it follows that the sequential
  completion of $\C$ identifies with a full subcategory of the
  ind-completion $\Ind(\C)$ in the sense of \cite[\S8]{GV}.
\end{rem}

\begin{rem}
  Let $F\colon\C\to\D$ be a functor.

(1) Suppose that $F$  admits a left adjoint. Then
  $F$ preserves Cauchy sequences and induces therefore a functor
  $\widehat F\colon\widehat\C\to\widehat\D$ such that
\begin{equation}\label{eq:functor}
  \widehat F(X)=\colim_iF(X_i) \quad\textrm{for} \quad X\in\widehat
  C.
\end{equation}

(2) Suppose that $\D$ admits filtered colimits. Then $F$ extends via
\eqref{eq:functor} to a functor $\widehat\C\to\D$.

  (3) If $(F,G)$ is an adjoint pair of functors
  that preserve Cauchy sequences, then  $(\widehat F,\widehat G)$ is
  an adjoint pair since
  \begin{align*}
    \Hom(\widehat F(X),Y)&\cong\Hom(\colim_iF(X_i), \colim_j Y_j)\\
                         &\cong \lim_i\colim_j\Hom(F(X_i),Y_j)\\
                         &\cong \lim_i\colim_j\Hom(X_i,G(Y_j))\\
                         &\cong \Hom(X,\widehat G(Y)).
\end{align*}  
\end{rem}

The notion of a Cauchy sequence goes back to work of Bolzano and
Cauchy (providing a criterion for convergence), while the construction
of the real numbers from the rationals via equivalence classes of
Cauchy sequences is due to Cantor and M\'eray
\cite{Ca1872,Me1869}. The sequential completion generalises this
construction.

\begin{exm}
  View the rational numbers $\bbQ=(\bbQ,\le)$ with the usual ordering
  as a category and let $\bbR_\infty=(\bbR\cup\{\infty\},\le)$.
  Taking a Cauchy sequence to its limit yields a functor
    \[\widehat\bbQ\longtwoheadrightarrow\bbR_\infty,\quad (x_i)\mapsto
      \lim_{i\to\infty}x_i.\] For $x\in\bbR_\infty$, there are
    precisely two isomorphism classes of objects in $\widehat\bbQ$
    with limit $x$ when $x$ is rational (depending on whether the
    sequence is eventually constant or not); otherwise there is
    precisely one isomorphism class in $\widehat\bbQ$ with limit
    $x$.\footnote{I am grateful to Zhenqiang Zhou for pointing out an
      error in the first version of this paper.}
\end{exm}
\begin{proof}
  A Cauchy sequence $x\in\Cau(\bbN,\bbQ)$ is by definition a sequence
  $x_0\le x_1\le x_2\le \cdots$ of rational numbers that is either
  bounded, so converges to $\bar x\in\bbR$, or it is unbounded and we
  set $\bar x=\infty$. Given a morphism $x\to y$ in $\Cau(\bbN,\bbQ)$
  that is eventually invertible, we have $\bar x=\bar y$. Conversely,
  if $\bar x=\bar y$, then we define $u\in\Cau(\bbN,\bbQ)$ by
  $u_i=\min(x_i,y_i)$ and have morphisms $x\leftarrow u\to y$. It is
  easily checked that $u\to x$ is eventually invertible, except when
  $x$ is eventually constant and $y$ is not.  Thus the assignment
  $x\mapsto \bar x$ yields the desired functor $\widehat\bbQ\twoheadrightarrow\bbR_\infty$.
\end{proof}

Let  $\two=\{0<1\}$ denote the poset consisting of two elements.
For any category $\C$, the category 
of morphisms in $\C$ identifies with $\C^{\two}=\Fun(\two,\C)$.

\begin{exm}\label{example:morphism-category}
  Let $\C$ be a category that admits an initial object and set $\D=\C^\two$. Then there is a
  canonical equivalence $\widehat\C^\two\xto{_\sim}\widehat{\D}$.
\end{exm}
\begin{proof}
  The equivalence
  \[F\colon \Fun(\two,\Fun(\bbN,\C))\xto{_\sim}\Fun(\two\times\bbN,\C)
    \xto{_\sim}\Fun(\bbN,\Fun(\two,\C))
  \]
  restricts to an equivalence
   \[F_0\colon \Fun(\two,\Cau(\bbN,\C))
    \xto{_\sim}\Cau(\bbN,\Fun(\two,\C)).
  \]
  In order to see this, let $\p\colon X\to Y$ be a morphism in
  $\Fun(\bbN,\C)$ and $\a\colon C\to D$ a morphism in $\C$. Suppose
  $X$ and $Y$ are in $\Cau(\bbN,\C)$. Thus there is $n\in\bbN$ such
  that $\Hom(C,X_i)\xto{_\sim}\Hom(C,X_{i+1})$ and
  $\Hom(D,Y_i)\xto{_\sim}\Hom(D,Y_{i+1})$ for $i\ge n$. Then
  $\Hom(\a,\p_i)\xto{_\sim}\Hom(\a,\p_{i+1})$ for $i\ge n$. Thus
  $F(\p)$ is in $\Cau(\bbN,\Fun(\two,\C))$. On the other hand, if
  $F(\p)$ is in $\Cau(\bbN,\Fun(\two,\C))$, then we choose $\a=\id_C$
  and $\a\colon I\to D$ ($I$ the initial object in $\C$) to see that
  $X$ and $Y$ are in $\Cau(\bbN,\C)$. It is easily checked that $F_0$
  induces a functor $\widehat\C^\two\to\widehat{\C^\two}$, and we
  claim that it is an equivalence. Recall from Remark~\ref{re:Ind}
  that there is a canonical embedding $\widehat\C\to\Ind(\C)$. So it
  remains to use the fact that $\Ind(\C)^\two\xto{_\sim}\Ind(\C^\two)$,
  which follows from Propositions~8.8.2 and 8.8.5 in \cite{GV}.
\end{proof}

Let us generalise the definition of the completion $\widehat\C$, because later on we
need to modify the underlying choice of Cauchy sequences.  We fix a
class $\X$ of objects in $\Fun(\bbN,\C)$. The \emph{completion of $\C$
  with respect to $\X$} is the category $\widehat{\C_\X}$ with class
of objects $\X$ and
\[\Hom(X,Y)=\lim_i\colim_j\Hom(X_i,Y_j)\quad\text{for}\quad
  X,Y\in\X.\] It follows from the definition that the assignment
$X\mapsto\colim_i\Hom(-,X_i)$ induces a fully faithful functor
$\widehat{\C_\X}\to\Fun(\C^\op,\Set)$. Clearly, $\widehat{\C}$
identifies with $\widehat{\C_\X}$ when $\X$ equals the class of Cauchy
sequences, by Corollary~\ref{co:completion}.

\begin{exm}\label{ex:countable-envelope}
 Let $\C$ be an exact
category and let $\X$ denote the class of sequences $X$ such that each
$X_i\to X_{i+1}$ is an admissible monomorphism. Then
$\C\tilde{\phantom{e}}:=\widehat{\C_\X}$ admits a canonical exact
structure and is called \emph{countable envelope} of $\C$
\cite[Appendix~B]{Ke1990}.
\end{exm}

\section{The sequential completion of an abelian category}
\label{se:abelian}

Let $\C$ be an additive category.

\begin{lem}
  The sequential completion
$\widehat\C$ is an additive category and the canonical functor
$\C\to\widehat\C$ is additive.
\end{lem}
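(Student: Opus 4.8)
The plan is to realise $\widehat\C$ as a localisation of an additive category at a calculus of fractions and then invoke the standard fact that such a localisation is again additive. Concretely, the functor category $\Fun(\bbN,\C)$ is additive with biproducts formed componentwise; the full subcategory $\Cau(\bbN,\C)$ contains the zero sequence and is closed under finite biproducts, hence additive; and by Lemma~\ref{le:LF} the class $S$ of eventually invertible morphisms admits a calculus of left fractions. Provided $S$ is compatible with the additive structure, the localisation $\widehat\C=\Cau(\bbN,\C)[S^{-1}]$ inherits an additive structure for which the localisation functor is additive, and composing with the (additive) constant-sequence functor $\C\to\Cau(\bbN,\C)$ shows that $\C\to\widehat\C$ is additive.

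First I would check that $\Cau(\bbN,\C)$ is closed under biproducts: for Cauchy sequences $X,Y$ the componentwise biproduct $(X\oplus Y)_i=X_i\oplus Y_i$ satisfies $\Hom(C,X_i\oplus Y_i)\cong\Hom(C,X_i)\times\Hom(C,Y_i)$, so taking $n_C$ to be the larger of the two Cauchy thresholds shows that $X\oplus Y$ is again Cauchy; the zero sequence is clearly Cauchy. The key compatibility, and the point I expect to require the most care, is that $S$ behaves well with respect to $\oplus$: if $s,t\in S$ then $s\oplus t\in S$ (again immediate from the decomposition of Hom into the two factors), so that fractions $s^{-1}\alpha$ can be added over a common denominator. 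Granting this, the abelian group structure on $\Hom_{\widehat\C}(X,Y)$ and the bilinearity of composition follow as in the Gabriel--Zisman treatment of additive localisations \cite{GZ1967}.

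As a concrete confirmation of the resulting structure, Corollary~\ref{co:completion} identifies $\Hom_{\widehat\C}(X,Y)\iso\lim_i\colim_j\Hom(X_i,Y_j)$; since $\C$ is additive these are abelian groups and the transition maps are homomorphisms, so the right-hand side is an abelian group, and bilinearity of composition descends from the bilinear composition maps in $\C$ because $\colim_j$ and $\lim_i$ are additive. From this formula I would read off the zero object, namely the constant sequence on a zero object of $\C$, since $\End(0)=\lim_i\colim_j\Hom(0,0)=0$, and confirm that $X\oplus Y$ is the biproduct in $\widehat\C$: for any $W$,
\begin{align*}
  \Hom(W,X\oplus Y)&\cong\lim_i\colim_j\bigl(\Hom(W_i,X_j)\times\Hom(W_i,Y_j)\bigr)\\
                   &\cong\Hom(W,X)\times\Hom(W,Y),
\end{align*}
using that finite products commute with filtered colimits and with inverse limits, the dual computation giving the coproduct property. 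Finally, $\C\to\widehat\C$ induces on Hom-sets the isomorphism $\Hom(X,Y)\iso\lim_i\colim_j\Hom(X,Y)=\Hom(X,Y)$, which is additive, completing the proof.
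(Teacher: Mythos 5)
Your proposal is correct and takes essentially the same route as the paper: the paper's proof is precisely the observation that $\Cau(\bbN,\C)$ is additive and that the eventually invertible morphisms admit a calculus of left fractions, at which point additivity of $\widehat\C$ and of the canonical functor follows from \cite[I.3.3]{GZ1967}. Your extra verifications (closure of Cauchy sequences under biproducts, closure of $S$ under $\oplus$, and the confirmation via Corollary~\ref{co:completion}) merely spell out details the paper leaves implicit.
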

\begin{proof}
The assertion  follows from the fact that
$\Cau(\bbN,\C)$ is additive and that the eventually invertible
morphisms admit a calculus of left fractions \cite[I.3.3]{GZ1967}.
\end{proof}

It follows that the assignment $X\mapsto\widetilde X$ yields a fully
faithful additive functor $\widehat\C\to\Add(\C^\op,\Ab)$ into the
category of additive functors $\C^\op\to\Ab$. 

\begin{lem} 
  If $\C$ admits kernels,
  then $\widehat\C$ admits kernels and $\C\to\widehat\C$ is left
  exact.
\end{lem}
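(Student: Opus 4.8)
The plan is to build kernels in $\widehat\C$ pointwise and to verify the universal property through the fully faithful embedding $\widehat\C\to\Add(\C^\op,\Ab)$ recorded just above. I will use throughout that kernels and filtered colimits in $\Add(\C^\op,\Ab)$ are computed pointwise and that filtered colimits are exact in $\Ab$, so that in particular filtered colimits commute with kernels there.

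First I would reduce an arbitrary morphism $\phi\colon X\to Y$ of $\widehat\C$ to a genuine natural transformation of Cauchy sequences. By Lemma~\ref{le:cofinal}(1) there is a cofinal $f$ and a morphism $\alpha\colon X\to Y_f$ in $\Cau(\bbN,\C)$ with $\widetilde\phi=\widetilde{f_Y}^{-1}\widetilde\alpha$; since $f_Y$ becomes invertible in $\widehat\C$, I may replace $Y$ by $Y_f$ and assume $\phi$ is represented by an honest morphism $\alpha=(\alpha_i)$ with $\alpha_i\colon X_i\to Y_i$ commuting with the structure maps. Then I form the sequence $K$ with $K_i=\Ker(\alpha_i\colon X_i\to Y_i)$, which exists because $\C$ admits kernels, the transition maps being induced by functoriality.

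The key claim is that $K$ is Cauchy. For $C\in\C$ left exactness of $\Hom(C,-)$ gives $\Hom(C,K_i)=\Ker(\Hom(C,X_i)\to\Hom(C,Y_i))$; since $X$ and $Y$ are Cauchy, the maps $\Hom(C,X_i)\to\Hom(C,X_{i+1})$ and $\Hom(C,Y_i)\to\Hom(C,Y_{i+1})$ are bijective for $i\gg0$, and functoriality of the kernel forces $\Hom(C,K_i)\to\Hom(C,K_{i+1})$ to be bijective for $i\gg0$ as well. Passing to filtered colimits and using their exactness then yields $\widetilde K=\colim_i\Ker(\Hom(-,\alpha_i))=\Ker(\widetilde\alpha)=\Ker(\widetilde\phi)$ in $\Add(\C^\op,\Ab)$.

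Finally I would transfer this back to $\widehat\C$. Because the embedding $G\colon\widehat\C\to\Add(\C^\op,\Ab)$ is fully faithful and $\Ker(\widetilde\phi)=\widetilde K=G(K)$ lies in its essential image, the universal property of the kernel is reflected: for $Z\in\widehat\C$, full faithfulness identifies $\Hom(Z,K)$ with the morphisms $G(Z)\to\widetilde X$ annihilated by $\widetilde\phi$, hence with the morphisms $Z\to X$ annihilated by $\phi$, so $K$ is a kernel of $\phi$ in $\widehat\C$. Applying this construction to a morphism in the image of $\C\to\widehat\C$, that is, to a constant sequence, produces $K_i=\Ker_\C(\alpha)$ for all $i$, i.e. the constant sequence on the kernel; thus $\C\to\widehat\C$ preserves kernels and is left exact. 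The one point requiring genuine care is the reduction to an honest morphism of sequences together with the Cauchy-ness of $K$; once these are in place, exactness of filtered colimits and full faithfulness make the remainder formal.
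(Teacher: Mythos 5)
Your proof is correct and follows essentially the same route as the paper's: represent the morphism up to isomorphism by an honest morphism of Cauchy sequences, take kernels termwise, observe that the resulting sequence $K$ is Cauchy, and conclude via the fully faithful embedding into $\Add(\C^\op,\Ab)$, where $0\to\widetilde K\to\widetilde X\to\widetilde Y$ is exact. You merely spell out the steps the paper leaves implicit (the cofinal reparametrization, the Cauchy verification for $K$, and the reflection of the universal property along the embedding), which is exactly the intended argument.
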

\begin{proof}
  A morphism $X\to Y$ in $\widehat\C$ is up to isomorphism given by a
  morphism $\p\colon X\to Y$ in $\Cau(\bbN,\C)$. Then
  $K:=(\Ker\p_i)_{i\ge 0}$ is a Cauchy sequence, and this yields the
  kernel in $\widehat\C$, because the sequence
  $0\to\widetilde K\to\widetilde X \to\widetilde Y$ is exact in
  $\Add(\C^\op,\Ab)$.
\end{proof}

Let $\A$ be an abelian category. We write $\fl\A$ for the full
subcategory of objects having finite composition length, and $\art\A$
denotes the full subcategory of artinian objects. For $X\in\A$ the
\emph{socle} $\soc X$ is the sum of all simple subobjects. One
defines inductively $\soc^n X\subseteq X$ for $n\ge 0$ by setting $\soc^0 X=0$,
and $\soc^{n+1}X$ is given by the exact sequence
\[0\lto\soc^n X\lto\soc^{n+1} X\lto\soc (X/\soc^n X)\lto 0.\]

\begin{exm}\label{ex:finite-length}
  Let $\A=\Mod\La$ be the module category of a commutative noetherian
  local ring $\La$. Then the sequential completion of $\fl\A$
  identifies with $\art\A$.
\end{exm}

\begin{proof}
  Set $\C=\fl\A$.  It is well known that a $\La$-module $X$ is
  artinian if and only if its socle has finite length.  In that case the socle
  series $(\soc^iX)_{i\ge 0}$ of $X$ yields a Cauchy sequence in $\C$
  with $\colim_i(\soc^iX)=X$.

  Now let $X\in\widehat\C$. The assignment
  $X\mapsto \bar X:=\colim_i X_i$ yields a fully faithful functor
  $\widehat\C\to\A$. Let $S$ denote the unique (up to isomorphism)
  simple object in $\A$. Then $\soc\bar X$ has
  finite length, since $\soc\bar X\cong\Hom(S,\bar
  X)\cong\colim_i\Hom(S, X_i)$. Thus $\bar X$ is artinian.
\end{proof}

The preceding example suggests a general criterion such that the
sequential completion of an abelian category is  abelian.

Let us fix a \emph{length category} $\C$. Thus $\C$ is an abelian
category and every object has finite length.  We call $\C$
\emph{ind-artinian} if
\begin{enumerate}
  \item $\C$ has only finitely many isomorphism classes of simple objects, 
  \item $\C$ is \emph{right Ext-finite}, that is, for every pair of
    simple objects $S$ and $T$ the $\End(S)$-module $\Ext^1(S,T)$ has
    finite length, and
  \item $\C$ satisfies the descending chain condition on subobjects of socle stable sequences in $\C$.
\end{enumerate}
  Here, we consider sequences $X=(X_i\to X_{i+1})_{i\ge 0}$ of morphisms in $\C$, so functors $(\bbN,\le)\to\C$, and $X$ is \emph{socle stable} if $X_i\xto{\sim} \soc^i X_j$ for all $j\ge i$. A subobject $X\subseteq Y$ is given by a morphism of functors $X\to Y$ such that $X_i\to Y_i$ is a monomorphism for all $i\ge 0$.

\begin{prop}\label{pr:ind-artinian}
  Let $\A$ be a Grothendieck category with a fully faithful functor
  $\C\hookrightarrow\A$ that identifies $\C$ with the full subcategory
  of finite length objects in $\A$. Suppose that every object in $\A$
  is the union of its finite length subobjects. Then the following are
  equivalent:
  \begin{enumerate}
  \item The category $\C$ is ind-artinian.
  \item The category $\C$ has only finitely many isomorphism classes
    of simple objects, and an object in $\A$ is artinian if its socle
    has finite length.
 \item The category $\A$  admits an artinian cogenerator.
\end{enumerate}
In this case an object in $\A$ is artinian if and only if it is the
colimit of a Cauchy sequence in $\C$.
\end{prop}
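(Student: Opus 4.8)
The plan is to prove the equivalences with condition~(2) as the \emph{hub}, establishing $(2)\Leftrightarrow(3)$ and $(1)\Leftrightarrow(2)$, and then to read off the final characterisation. First I would record three facts that hold under the standing hypotheses on $\A$. (a) Every nonzero object has nonzero socle: it contains a nonzero finite length subobject, hence a simple one. (b) For a subobject $Z\subseteq X$ one has $\soc^n Z=Z\cap\soc^n X$ for all $n$; this follows by induction from $\soc Z=Z\cap\soc X$. (c) Every object satisfies $X=\bigcup_n\soc^n X=\colim_n\soc^n X$, since any finite length subobject $Y\subseteq X$ lies in $\soc^{\ell}X$ for $\ell$ the Loewy length of $Y$, using~(b). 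I would also note that an artinian object has socle of finite length, an infinite socle yielding an infinite properly descending chain of partial direct sums.

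For $(3)\Rightarrow(2)$, let $E$ be an artinian cogenerator. Its socle is of finite length by the last remark and contains a copy of every simple object, so $\A$ has only finitely many simples. If $\soc X$ has finite length, then since $E$ cogenerates, the maps $X\to E$ are jointly monic, and as $\soc X$ has finite length finitely many of them, say $f_1,\dots,f_n$, already restrict to a monomorphism on $\soc X$. Thus $f=(f_j)\colon X\to E^n$ has $\ker f\cap\soc X=0$, so $\soc(\ker f)=0$ by~(b) and $\ker f=0$ by~(a); hence $X$ embeds into the artinian object $E^n$ and is artinian. For $(2)\Rightarrow(3)$ I would take $I=\bigoplus_{[S]}E(S)$, the finite sum of injective envelopes of the simples; each $E(S)$ has socle $S$, hence is artinian by~(2), so $I$ is an artinian injective cogenerator.

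The implication $(2)\Rightarrow(1)$ supplies the two remaining clauses. For right Ext-finiteness I would apply $\Hom(S,-)$ to $0\to T\to E(T)\to E(T)/T\to 0$; as $E(T)$ is injective this presents $\Ext^1(S,T)$ as a quotient of $\Hom(S,E(T)/T)=\Hom(S,\soc(E(T)/T))$, which is of finite length over $\End(S)$ because $E(T)$ is artinian by~(2). For the chain condition, a socle stable sequence $Y$ has colimit $\bar Y$ with $\soc\bar Y$ of finite length, hence artinian; by~(b) the socle stable subobjects of $Y$ correspond under $Z\mapsto\colim_i Z_i$ to the subobjects of $\bar Y$, so the condition is inherited from $\bar Y$. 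The reverse implication $(1)\Rightarrow(2)$ is the \emph{heart of the matter}. Given $\soc X$ of finite length, I would first show each $\soc^n X$ has finite length, the inductive step bounding $\soc(X/\soc^n X)$ by sandwiching $\Hom(T,X/\soc^n X)$ between $\Hom(T,X)=\Hom(T,\soc X)$ and $\Ext^1(T,\soc^n X)$, both finite by finitely many simples and right Ext-finiteness. Then $(\soc^n X)_n$ is a socle stable sequence in $\C$ with colimit $X$ by~(c). Given a descending chain $(W_k)$ of subobjects of $X$, the socle series $\soc^n W_k=W_k\cap\soc^n X$ form a descending chain of socle stable subobjects of $(\soc^n X)_n$, which stabilises by~(3); passing to colimits over $n$ and using~(c) shows $(W_k)$ stabilises, so $X$ is artinian.

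For the final assertion, if $X$ is artinian then $\soc X$ is of finite length, each $\soc^n X$ is of finite length as above, and the socle series is a Cauchy sequence with colimit $X$: for $C\in\C$ every map $C\to\soc^n X$ factors through $\soc^{\ell}X$ once $n\ge\ell$, where $\ell$ is the Loewy length of $C$. Conversely the colimit of a Cauchy sequence $(X_i)$ in $\C$ has $\soc X=\colim_i\soc X_i$ of finite length, since each $\Hom(S,X_i)$ stabilises and there are finitely many simples, so $X$ is artinian by~(2). I expect the main obstacle to be $(1)\Rightarrow(2)$, and within it the chain condition: one must recognise that the subobjects relevant to~(3) are the \emph{saturated} ones $W_k\cap\soc^\bullet X$, which are socle stable, and that passing to the colimit over $n$ preserves stabilisation—this is exactly where the identity $\soc^n W=W\cap\soc^n X$ and the equality $X=\bigcup_n\soc^n X$ do the work.
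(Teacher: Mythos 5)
Your proof is correct, and it rests on the same core machinery as the paper's: the socle filtration $(\soc^n X)_{n\ge 0}$, the correspondence between subobjects of a colimit and the saturated subobject sequences $(U\cap\soc^n X)_n$, and Ext-finiteness extracted from artinian injective envelopes of simples. But the route through the equivalences is genuinely different. The paper proves the cycle $(1)\Rightarrow(2)\Rightarrow(3)\Rightarrow(1)$, whereas you use $(2)$ as a hub, and two of your legs are more economical. First, your direct $(3)\Rightarrow(2)$: choosing finitely many maps $f_1,\dots,f_n\colon X\to E$ that are jointly monic on the finite length socle and concluding $\Ker f=0$ from $\soc(\Ker f)=\Ker f\cap\soc X=0$ embeds $X$ into $E^n$ with no injectivity assumption on $E$. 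The paper instead reduces to an injective cogenerator via the claim that each $E(S)$ is a direct summand of $E$, whose justification is precisely your socle argument (a map $E(S)\to E$ nonzero on $S$ is automatically monic), so you make explicit what the paper leaves implicit. Second, in $(1)\Rightarrow(2)$ you avoid the paper's reduction to injective $X$: instead of the isomorphism $\Hom(S,X/\soc^nX)\cong\Ext^1(S,\soc^nX)$, which needs $X$ injective, you use only the exact sequence $\Hom(S,X)\to\Hom(S,X/\soc^nX)\to\Ext^1(S,\soc^nX)$ to bound lengths. Finally, a shared subtlety you correctly isolate: both your $(2)\Rightarrow(1)$ and the paper's $(3)\Rightarrow(1)$ verify the descending chain condition only for \emph{socle stable} subobjects, since $(V_i)_i\mapsto\bigcup_iV_i$ is injective only on those; for arbitrary subobjects the condition can fail (lagging subsequences $(Y_{\max(i-k,0)})_i$ of a non-stabilising socle stable sequence give a strictly descending chain). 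The definition of ind-artinian must be read with this saturation built in, and since your $(1)\Rightarrow(2)$ invokes the chain condition only for saturated chains, your argument is internally consistent under that reading, exactly as the paper's is.
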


\begin{proof}
  Let us begin with the observation that for every artinian object
  $X\in\A$ the socle series $(\soc^i X)_{i\ge 0}$ is a Cauchy sequence
  in $\C$ with colimit $X$. To see this, note that $X_i:=\soc^i X\in\C$ for
  all $i$ since $X_i/X_{i-1}$ is semisimple and artinian, so
  of finite length. Furthermore, each object $C\in\C$ satisfies
  $\soc^nC=C$ for some $n$, and then every morphism $C\to X$ factors
  through $X_n$. Thus $\Hom(C,X_i)\xto{_\sim}\Hom(C,X_{i+1})$ for
  all $i\ge n$, and $X=\bigcup_i\soc^i X$.

  (1) $\Rightarrow$ (2): Let $X\in\A$ and suppose that $\soc X$ has
  finite length.  An injective envelope $X\to E$ induces an
  isomorphism $\soc X\xto{_\sim}\soc E$.  So we may assume that $X$ is
  injective.  Set $X_i:=\soc^iX$ for $i\ge 0$. The assumption on $\C$
  implies that $X_n$ has finite length for all $n > 0$. This follows
  by induction from the defining exact sequence for $\soc^n$ as
  follows.  Let $S=\bigoplus_i S_i$ be the direct sum of a
  representative set of simple objects.  For $n>0$ we have an
  isomorphism of
  $\End(S)$-modules\[\Hom(S,X/X_n)\cong\Ext^1(S,X_n),\] and
  their length equals the length of $\soc (X/X_n)$. Thus
  $X_n\in\C$ for all $n>0$.

 The sequence $(X_i\to X_{i+1})_{i\ge 0}$ is socle stable, and the
  subobjects $U\subseteq X$ identify with subobjects of this socle
  stable sequence by taking $U$ to the sequence
  $(U_i\to U_{i+1})_{i\ge 0}$ given by $U_i:=\soc^iU=U\cap X_i$. The
  inverse map takes a subsequence $(V_i\to V_{i+1})_{i\ge 0}$ given by
  subobjects $V_i\subseteq X_i$ to $\bigcup_iV_i\subseteq X$.  Thus
  $X$ is artinian.

  (2) $\Rightarrow$ (3): Choose an injective envelope $E=E(S)$ in $\A$
  for the direct sum $S=\bigoplus_i S_i$ of a representative set of
  simple objects. Then $E$ is an injective cogenerator which is
  artinian since $\soc E\cong S$.
 
  (3) $\Rightarrow$ (1): Choose an artinian cogenerator $E$ of $\A$,
  and we may assume $E$ is injective since for each simple $S$ the
  injective envelope $E(S)$ is a direct summand of $E$.  The number of
  isoclasses of simple objects is bounded by the length of $\soc E$
  and is therefore finite. Fix simple objects $S,T\in\A$ and choose a
  monomorphism $T\to E$. Then $\Hom(S,E/T)\cong \Ext^1(S,T)$ and the
  length of $\Ext^1(S,T)$ as $\End(S)$-module is bounded by the length
  of $\soc (E/T)$ which is finite since $E/T$ is artinian. It follows
  that $\C$ is right ext-finite. Now fix a socle stable sequence
  $(X_i\to X_{i+1})_{i\ge 0}$ in $\C$ and set $X=\colim_i X_i$. Note
  that $X_i\cong \soc^iX$ for all $i\ge 0$.  Then $X$ embeds into the
  injective envelope $E(X_1)$ and is therefore artinian. Subobjects of
  $(X_i\to X_{i+1})_{i\ge 0}$ correspond to subobjects of $X$ by the
  first part of the proof. Thus $\C$ is ind-artinian.

  It remains to establish the last assertion. We have already seen
  that any artinian object is the colimit of a Cauchy sequence in
  $\C$. Conversely, let $X\in\A$ be the colimit of a Cauchy sequence
  $(X_i)_{i\ge 0}$ in $\C$ and let $n\in\bbN$ such that for every simple
  object $S$ we have $\Hom(S,X_i)\xto{_\sim}\Hom(S,X_{i+1})$ for all
  $i\ge n$. Then $\soc X=\soc X_n$ is in $\C$, and therefore $X$ is
  artinian.
\end{proof}

Recall that an abelian category satisfies the (AB5) condition if for
every directed set of subobjects $(A_i)_{i\in I}$ of an object $A$ and
$B\subseteq A$ one has
\[\left(\sum_{i\in I}A_i\right)\cap B=\sum_{i\in I}(A_i\cap B).\]

\begin{cor}
  Let $\C$ be a length category and suppose $\C$ is ind-artinian. Then
  $\widehat\C$ is an abelian category with injective envelopes,
  satisfying the \emph{(AB5)} condition, and every object is
  artinian. Moreover, the canonical functor $\C\to\widehat\C$ induces
  an equivalence $\C\iso\fl\widehat\C$.
\end{cor}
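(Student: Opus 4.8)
The plan is to realise $\widehat\C$ as the full subcategory of artinian objects inside a suitable ambient Grothendieck category, and then to read off all four assertions from Proposition~\ref{pr:ind-artinian}. First I would take $\A:=\Ind(\C)=\Lex(\C^\op,\Ab)$, the ind-completion of $\C$; since $\C$ is ind-artinian it has only finitely many simple objects and is skeletally small, so $\A$ is a Grothendieck category. The Yoneda functor $\C\hookrightarrow\A$ is exact and fully faithful and identifies $\C$ with the finitely presented objects of $\A$, which, as every object of $\C$ has finite length, are exactly the finite length objects of $\A$. Moreover every object of $\A$ is a filtered colimit of objects of $\C$, hence the directed union of its finite length subobjects. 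Thus $\A$ meets all the hypotheses of Proposition~\ref{pr:ind-artinian}, and since $\C$ is ind-artinian that proposition identifies the artinian objects of $\A$ with the colimits of Cauchy sequences in $\C$.

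On the other hand, Remark~\ref{re:Ind} together with Proposition~\ref{pr:completion} embeds $\widehat\C$ fully faithfully into $\A$ via $X\mapsto\widetilde X=\colim_i\Hom(-,X_i)$, with essential image the colimits of Cauchy sequences in $\C$. Comparing the two descriptions yields a fully faithful functor $\widehat\C\to\A$ whose essential image is precisely $\art\A$, and this equivalence $\widehat\C\iso\art\A$ drives the whole argument. Since $\art\A$ is closed under subobjects, quotients and finite direct sums in $\A$, it is an abelian subcategory; hence $\widehat\C$ is abelian and every object is artinian. The (AB5) identity for $\widehat\C$ then follows from (AB5) in $\A$: given a directed family $(A_i)$ of subobjects of some $A\in\art\A$ and a subobject $B\subseteq A$, each of $A_i$, $\sum_i A_i$, $B$, $A_i\cap B$ and $\sum_i(A_i\cap B)$ is a subobject of the artinian object $A$ and so lies in $\art\A$, while intersections and directed sums agree with those computed in $\A$.

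For injective envelopes I would show that the injective envelope in $\A$ of an artinian object remains artinian. If $X\in\art\A$ and $X\to E$ is an injective envelope in the Grothendieck category $\A$, then this extension is essential, so $\soc E=\soc X$ has finite length; by the characterisation in Proposition~\ref{pr:ind-artinian}(2) the object $E$ is therefore artinian. Such an $E$ is injective inside $\art\A$ and essential over $X$, giving injective envelopes in $\widehat\C$. Finally, to identify $\fl\widehat\C$ with $\C$, note that under the equivalence the canonical functor $\C\to\widehat\C$ corresponds to the inclusion of finite length objects $\C\hookrightarrow\art\A$; as $\art\A$ is an abelian subcategory closed under subobjects and quotients, its simple objects and composition series coincide with those of $\A$, so the finite length objects of $\widehat\C\simeq\art\A$ are exactly the finite length objects of $\A$, namely $\C$.

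I expect the main obstacle to be the bookkeeping around the ambient category: checking cleanly that $\Ind(\C)$ satisfies every hypothesis of Proposition~\ref{pr:ind-artinian} (finite length objects $=\C$, every object a union of finite length subobjects), and verifying that the colimit of a Cauchy sequence computed through $\widehat\C\to\A$ agrees with the colimit appearing in the last assertion of that proposition, so that the two essential images genuinely coincide. Once this matching is in place, the remaining verifications are formal and are inherited directly from the Grothendieck category $\A$.
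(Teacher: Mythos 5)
Your proposal is correct and follows essentially the same route as the paper's own proof: embed $\C$ into the Grothendieck category $\A=\Lex(\C^\op,\Ab)$, identify $\widehat\C$ with $\art\A$ by combining Proposition~\ref{pr:completion} with Proposition~\ref{pr:ind-artinian}, and read the four assertions off this equivalence. The paper compresses the final verifications into ``From this the assertion follows,'' while you spell them out (correctly); the substance is identical.
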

\begin{proof}
  We embed $\C$ into a Grothendieck category via the functor
  \[\C\lto\A:=\Lex(\C^\op,\Ab),\quad X\mapsto\Hom(-,X),\]
  where $\Lex(\C^\op,\Ab)$ denotes the category of left exact functors
  $\C^\op\to\Ab$; see \cite{Ga1962}. Then $\widehat\C$ identifies with
  a subcategory of $\A$ via Proposition~\ref{pr:completion}, and
  Proposition~\ref{pr:ind-artinian} implies that $\widehat\C=\art\A$. From
  this the assertion follows.
\end{proof}

\begin{exm}\label{ex:noeth-alg}
  Let $\La$ be a noetherian algebra over a complete local ring and set
  $\A=\Mod\La$. Then $\fl\A$  is ind-artinian and $\widehat{\fl\A}$
  identifies with $\art\A$.
\end{exm}
\begin{proof}
  There are only finite many simple $\La$-modules, up to isomorphism,
  and their injective envelopes are artinian. Thus $\fl\A$ is
  ind-artinian by Proposition~\ref{pr:ind-artinian}, and therefore $\widehat{\fl\A}$
  identifies with $\art\A$.
\end{proof}

\begin{exm}
  Let $\La$ be a ring and $\C\subseteq \Mod\La$ a full subcategory of
  its module category that contains $\La$. Then $\C$ is sequentially
  complete.
\end{exm}
\begin{proof}
  Let $X\in\Cau(\bbN,\C)$. We have $\Hom(\La,X_i)\cong X_i$,  and therefore
$X_i\xto{_\sim}X_{i+1}$ for $i\gg 0$. Thus  $\colim_i X_i$ belongs to $\C$.
\end{proof}

\section{The sequential completion of a triangulated category}
\label{s:seq-completion}

Let $\T$ be a triangulated category and suppose that countable coproducts
exist in $\T$. Let
\[
\begin{tikzcd}
X_0\arrow{r}{\p_0}&X_1\arrow{r}{\p_1}&
  X_2\arrow{r}{\p_2}&\cdots
\end{tikzcd}\] be a sequence of morphisms in $\T$. A \emph{homotopy
colimit} of this sequence is by definition an object $X$ that occurs
in an exact triangle
\[\begin{tikzcd}
    \Si^{-1}X\arrow{r}&\coprod_{i\ge
      0}X_i\arrow{r}{\id-\p}&\coprod_{i\ge 0}X_i\arrow{r}& X.
  \end{tikzcd}\] We write $\hocolim_i X_i$ for $X$ and observe that a
homotopy colimit is unique up to a (non-unique) isomorphism \cite{BN1993}.

Recall that an object $C$ in $\T$ is \emph{compact} if $\Hom(C,-)$
preserves all coproducts. A morphism $X\to Y$ is \emph{phantom} if any
composition $C\to X\to Y$ with $C$ compact is zero
\cite{Ch1998,CS1998}. The phantom morphisms form an ideal and we write
$\Ph(X,Y)$ for the subgroup of all phantoms in $\Hom(X,Y)$. Let us
denote by $\T/\Ph$ the additive category which is obtained from $\T$
by annihilating all phantom morphisms.

\begin{lem}\label{le:hocolim}
  Let $C\in\T$ be compact. Any sequence $X_0\to X_1\to X_2\to\cdots$
  in $\T$ induces an isomorphism
  \[\colim_i\Hom(C,X_i)\xto{\ _\sim\ }\Hom(C,\hocolim_i X_i).\]
\end{lem}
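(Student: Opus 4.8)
The plan is to apply the cohomological functor $\Hom(C,-)$ to the triangle defining the homotopy colimit and to read off the claim from the resulting long exact sequence, using compactness of $C$ to replace the coproduct by a direct sum and an elementary computation to identify the relevant cokernel with the colimit.

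First I would write $X=\hocolim_iX_i$, so that by definition there is an exact triangle
\[\Si^{-1}X\lto\coprod_{i\ge 0}X_i\xto{\ \id-\p\ }\coprod_{i\ge 0}X_i\xto{\ \pi\ }X.\]
Applying $\Hom(C,-)$ yields a long exact sequence of abelian groups. Since $C$ is compact, $\Hom(C,\coprod_iX_i)\cong\bigoplus_i\Hom(C,X_i)$, and likewise after suspension; under this identification the map $(\id-\p)_*$ becomes the map $1-\shift$ on $\bigoplus_i\Hom(C,X_i)$, where $\shift$ sends the $i$-th summand to the $(i+1)$-st via $\Hom(C,\p_i)$. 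The relevant portion of the long exact sequence reads
\[\bigoplus_i\Hom(C,X_i)\xto{\ 1-\shift\ }\bigoplus_i\Hom(C,X_i)\xto{\ \pi_*\ }\Hom(C,X)\lto\bigoplus_i\Hom(C,\Si X_i)\xto{\ 1-\shift\ }\bigoplus_i\Hom(C,\Si X_i).\]

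The key elementary fact is that for any sequence of abelian groups $A_0\to A_1\to\cdots$ the map $1-\shift\colon\bigoplus_iA_i\to\bigoplus_iA_i$ is a monomorphism with cokernel $\colim_iA_i$; this is the standard Milnor/telescope computation. Applying it to $\Hom(C,X_i)$ and to its suspension $\Hom(C,\Si X_i)$ shows that $1-\shift$ is injective in both degrees. Injectivity in the suspended degree forces the connecting map $\Hom(C,X)\to\bigoplus_i\Hom(C,\Si X_i)$ to vanish, so $\pi_*$ is surjective; and $\ker\pi_*=\im(1-\shift)$, so $\pi_*$ induces an isomorphism
\[\colim_i\Hom(C,X_i)=\Coker(1-\shift)\xto{\ _\sim\ }\Hom(C,X).\]
Finally I would check that this isomorphism is the canonical comparison map: the composite $X_i\to\coprod_jX_j\xto{\pi}X$ is exactly the structure morphism of the homotopy colimit, so $\pi_*$ restricted to the $i$-th summand is postcomposition with $X_i\to X$, which is what defines the map out of the colimit.

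I expect the only subtle point to be the bookkeeping in the long exact sequence, namely verifying that injectivity of $1-\shift$ in the suspended degree really does annihilate the connecting homomorphism, so that $\pi_*$ is onto. Everything else is formal: compactness converts the coproduct into a direct sum, and the identification of $\Coker(1-\shift)$ with $\colim_i\Hom(C,X_i)$ is purely algebraic.
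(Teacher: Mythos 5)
Your argument is correct, and it coincides with the paper's approach: the paper does not spell out a proof but simply cites \cite[\S5.1]{Ke1994} and \cite[Lemma~1.5]{Ne1992}, and the proof found there is exactly yours --- apply $\Hom(C,-)$ to the defining triangle, use compactness to identify $\Hom(C,\coprod_i X_i)$ with $\bigoplus_i\Hom(C,X_i)$, and use injectivity of $1-\shift$ in both degrees to kill the connecting map and identify $\Hom(C,\hocolim_i X_i)$ with the cokernel, i.e.\ the colimit. Your closing check that the resulting isomorphism is the canonical comparison map (via the composites $X_i\to\coprod_j X_j\to X$) is also the right way to finish, and the sign ambiguity from rotating the triangle is harmless since $-(1-\shift)$ is injective whenever $1-\shift$ is.
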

\begin{proof}
  See \cite[\S5.1]{Ke1994} or \cite[Lemma~1.5]{Ne1992}.
\end{proof}

Recall that for any sequence
$\cdots\to A_2\xto{\p_2} A_1\xto{\p_1} A_0$ of maps between abelian
groups the inverse limit and its first derived functor are given by
the exact sequence
\[\begin{tikzcd}
  0\lto\lim_iA_i\lto  \prod_{i\ge 0}A_i\arrow{r}{\id-\p}&\prod_{i\ge 0}A_i\arrow{r}&
    \lim^1_iA_i\arrow{r}& 0.
  \end{tikzcd}\]

The following result goes back to work of Milnor \cite{Mi1962} and was
later extended by several authors, for instance in \cite{Ch1998,CS1998}. 

\begin{lem}\label{le:phantom}
  Let $X=\hocolim_i X_i$ be a homotopy colimit in $\T$ such that
  each $X_i$ is a coproduct of compact objects. Then we have for any
  $Y$ in $\T$ a natural exact sequence
\[0\lto\Ph(X,Y)\lto\Hom(X,Y)\lto\lim_i\Hom(X_i,Y)\lto 0\]
and an isomorphism
\[\Ph(X,\Si Y)\cong \sideset{}{^{1}}\lim_i\Hom(X_i,Y).\]
\end{lem}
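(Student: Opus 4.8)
The plan is to apply the cohomological functor $\Hom(-,Y)$ to the defining triangle
\[\Si^{-1}X\lto\coprod_i X_i\xto{\ \id-\p\ }\coprod_i X_i\lto X\]
of the homotopy colimit and to match the resulting long exact sequence with Milnor's $\lim$--$\lim^1$ sequence recalled above. Writing $P=\coprod_i X_i$ and using $\Hom(P,Y)\cong\prod_i\Hom(X_i,Y)$, a direct computation of components shows that the induced map $(\id-\p)^*$ on $\prod_i\Hom(X_i,Y)$ is precisely the map $\id-\p$ attached to the inverse system $\cdots\to\Hom(X_1,Y)\to\Hom(X_0,Y)$, whose transition maps are precomposition with the $\p_i$. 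Hence its kernel is $\lim_i\Hom(X_i,Y)$ and its cokernel is $\lim^1_i\Hom(X_i,Y)$.

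First I would isolate the key point: for the canonical map $g\colon P\to X$, a morphism $\a\colon X\to Y$ satisfies $\a g=0$ if and only if $\a$ is phantom. For the ``only if'' direction, given a compact $C$ and $c\colon C\to X$, Lemma~\ref{le:hocolim} lets me factor $c$ through some $X_i$, say $c=g\iota_i c'$ with $\iota_i\colon X_i\to P$ the inclusion; then $\a c=\a g\iota_i c'=0$. For the ``if'' direction, I use the hypothesis that each $X_i$, hence $P$, is a coproduct of compact objects: a morphism out of $P$ vanishes if and only if it vanishes on each compact summand, and each restriction $\a g\iota$ (with $\iota$ the inclusion of a compact summand) is zero because $\a$ is phantom. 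This identification of $\Ph(X,Y)$ with $\ker(g^*)$ is the crux of the argument; everything else is bookkeeping in the long exact sequence, and it is exactly here that the standing hypotheses on the $X_i$ are consumed.

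With this in hand, the segment
\[\Hom(X,Y)\xto{\ g^*\ }\prod_i\Hom(X_i,Y)\xto{\ (\id-\p)^*\ }\prod_i\Hom(X_i,Y)\]
gives $\ker(g^*)=\Ph(X,Y)$ and, by exactness at the middle term, $\Im(g^*)=\ker\big((\id-\p)^*\big)=\lim_i\Hom(X_i,Y)$. This is precisely the asserted four-term exact sequence, the right-hand map being the restriction $\a\mapsto(\a g\iota_i)_i$.

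For the $\lim^1$ isomorphism I would continue the long exact sequence one step, using that $\Si^{-1}$ is an exact, coproduct-preserving autoequivalence: it carries the defining triangle to that of $\Si^{-1}X=\hocolim_i\Si^{-1}X_i$, again a homotopy colimit of coproducts of compacts, with canonical map $\Si^{-1}g$. The cokernel of $(\id-\p)^*$ is $\lim^1_i\Hom(X_i,Y)$, and by exactness it equals the kernel of $(\Si^{-1}g)^*$ inside $\Hom(\Si^{-1}X,Y)\cong\Hom(X,\Si Y)$. Applying the key point above to $\Si^{-1}X$ identifies this kernel with $\Ph(\Si^{-1}X,Y)$, which corresponds to $\Ph(X,\Si Y)$ since $\Si$ preserves compactness and hence the phantom ideal. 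This yields $\Ph(X,\Si Y)\cong\lim^1_i\Hom(X_i,Y)$. Naturality in $Y$ holds throughout, since every map involved is induced by composition.
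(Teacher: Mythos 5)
Your proof is correct and follows essentially the same route as the paper's, which reads in full: apply $\Hom(-,Y)$ to the defining triangle and use that a morphism $X\to Y$ is phantom if and only if it factors through the canonical morphism $X\to\coprod_{i\ge 0}\Si X_i$ --- your characterisation $\Ph(X,Y)=\Ker(g^*)$ is the same statement, translated through exactness of the induced long exact sequence. You merely fill in what the paper leaves implicit: the two-sided verification of the phantom criterion (using Lemma~\ref{le:hocolim} in one direction and the coproduct-of-compacts hypothesis in the other) and the desuspension step identifying the cokernel of $(\id-\p)^*$ with $\Ph(X,\Si Y)$.
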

\begin{proof}
  Apply $\Hom(-,Y)$ to the exact triangle defining $\hocolim_i X_i$
  and use that a morphism $X\to Y$ is phantom if and only if it
  factors through the canonical morphism
  $X\to\coprod_{i\ge 0}\Si X_i$.
\end{proof}

Let $\C\subseteq\T$ be a full additive subcategory consisting of
compact objects and consider the restricted Yoneda functor
\[\T\lto\Add(\C^\op,\Ab),\quad X\mapsto h_X:=\Hom(-,X)|_\C.\]
Note that for any sequence $X_0\to X_1\to X_2\to\cdots$ in $\C$ we
have by Lemma~\ref{le:hocolim}
\begin{equation}\label{eq:colim}
  \widetilde X=\colim_i\Hom(-,X_i)=h_{\hocolim_i X_i}.
\end{equation}

\begin{lem}\label{le:yoneda}
  Let $X=\hocolim_i X_i$ be a homotopy colimit in $\T$ such that
  each $X_i$ is a coproduct of objects in $\C$. Then we have for any
  $Y$ in $\T$ a natural isomorphism
  \[\frac{\Hom(X,Y)}{\Ph(X,Y)}\xto{\ _\sim\ }\Hom(h_X,h_Y).\]
  \end{lem}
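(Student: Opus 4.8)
The plan is to construct the isomorphism by factoring the restricted Yoneda map through the quotient $\Hom(X,Y)/\Ph(X,Y)$ and then identifying both sides with the same inverse limit. First I would observe that the restricted Yoneda functor kills phantom morphisms by definition: a phantom $X\to Y$ composes to zero with every map from a compact object, hence induces the zero natural transformation $h_X\to h_Y$ on the subcategory $\C$ of compact objects. This gives a well-defined induced map $\Hom(X,Y)/\Ph(X,Y)\to\Hom(h_X,h_Y)$, so it remains only to show this induced map is bijective.

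For injectivity and the target's structure, the key is to compute $\Hom(h_X,h_Y)$ explicitly. Using \eqref{eq:colim}, we have $h_X=\colim_i\Hom(-,X_i)=\colim_i h_{X_i}$ in $\Add(\C^\op,\Ab)$. Since each $h_{X_i}$ is a (coproduct of) representable functor and colimits in a functor category are computed pointwise, I would apply the universal property of colimits together with Yoneda's lemma to get
\[
\Hom(h_X,h_Y)\cong\lim_i\Hom(h_{X_i},h_Y)\cong\lim_i h_Y(X_i)=\lim_i\Hom(X_i,Y),
\]
where the middle isomorphism is Yoneda (each $X_i$ being a coproduct of objects in $\C$, so $h_{X_i}$ is projective-like for evaluation). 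This exhibits $\Hom(h_X,h_Y)$ as exactly $\lim_i\Hom(X_i,Y)$.

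The final step is to match this against the source. Lemma~\ref{le:phantom} provides precisely the exact sequence
\[
0\lto\Ph(X,Y)\lto\Hom(X,Y)\lto\lim_i\Hom(X_i,Y)\lto 0,
\]
whose surjective right-hand map induces an isomorphism $\Hom(X,Y)/\Ph(X,Y)\iso\lim_i\Hom(X_i,Y)$. I would then check that this isomorphism agrees with the induced Yoneda map under the identification of the previous paragraph; this is a compatibility verification tracing through how a morphism $X\to Y$ restricts to $h_X\to h_Y$ and then evaluates at each $X_i$, which matches the component maps of the $\lim_i$ presentation.

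The main obstacle I anticipate is not any single hard estimate but rather the bookkeeping in the second step: justifying $\Hom(\colim_i h_{X_i},h_Y)\cong\lim_i\Hom(h_{X_i},h_Y)$ requires that the colimit defining $h_X$ genuinely be a colimit in the functor category (which follows from pointwise computation), and the subsequent Yoneda identification must correctly handle the coproduct structure of each $X_i$. Once the target is pinned down as $\lim_i\Hom(X_i,Y)$, the result is essentially forced by Lemma~\ref{le:phantom}; the only real content is confirming that the two a priori different isomorphisms to $\lim_i\Hom(X_i,Y)$ — one from Yoneda, one from the Milnor sequence — are induced by the same underlying map $\Hom(X,Y)\to\Hom(h_X,h_Y)$, so that they glue into the claimed natural isomorphism.
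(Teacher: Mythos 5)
Your proposal is correct and follows essentially the same route as the paper: both rest on the Milnor sequence of Lemma~\ref{le:phantom} to identify $\Hom(X,Y)/\Ph(X,Y)$ with $\lim_i\Hom(X_i,Y)$, and on the identification $h_X=\colim_i h_{X_i}$ from \eqref{eq:colim} together with Yoneda (extended to coproducts of compacts) to identify $\Hom(h_X,h_Y)$ with the same inverse limit. The paper simply chains these isomorphisms, whereas you additionally make explicit that the induced map is the restricted Yoneda map killing phantoms --- a worthwhile compatibility check, but not a different argument.
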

  \begin{proof}
    Using the preceding lemmas, we have
    \begin{align*}
      \frac{\Hom(X,Y)}{\Ph(X,Y)}&\cong\lim_i\Hom(X_i,Y)\\
                                &\cong\lim_i\Hom(h_{X_i},h_{Y})\\
                                &\cong\Hom(\colim_ih_{X_i},h_{Y})\\
      &\cong\Hom(h_X,h_Y).\qedhere
      \end{align*}
 \end{proof}

\begin{prop}\label{pr:completion-phantom}
  Let $\C\subseteq\T$ be a full additive subcategory consisting of
  compact objects. Taking a sequence $X_0\to X_1\to X_2\to \cdots$ in
  $\C$ to its homotopy colimit induces a fully faithful functor
  $\widehat{\C}\to\T/\Ph$.
\end{prop}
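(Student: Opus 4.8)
The plan is to define the functor not directly on the localisation $\widehat\C$—where a homotopy colimit is determined only up to a non-canonical isomorphism, and so is not obviously functorial—but to transport it across the fully faithful Yoneda-type embeddings into $\Add(\C^\op,\Ab)$. First I would record that the restricted Yoneda functor $h\colon\T\to\Add(\C^\op,\Ab)$, $X\mapsto h_X$, annihilates every phantom morphism: since each object of $\C$ is compact, a phantom $X\to Y$ induces the zero map $h_X\to h_Y$. Hence $h$ factors through a functor $\bar h\colon\T/\Ph\to\Add(\C^\op,\Ab)$. I would also recall the fully faithful embedding $\iota\colon\widehat\C\to\Add(\C^\op,\Ab)$, $X\mapsto\widetilde X$, coming from Proposition~\ref{pr:completion}.

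Next I would set $F(X)=\hocolim_i X_i$ on objects, viewed in $\T/\Ph$. The key identity is \eqref{eq:colim}, which gives $\bar h(F(X))=h_{\hocolim_i X_i}=\widetilde X=\iota(X)$; thus $\bar h\circ F$ agrees on objects with $\iota$. To define $F$ on a morphism $g\colon X\to Y$ of $\widehat\C$, I would invoke Lemma~\ref{le:yoneda}, applied with the Cauchy sequence $X$ (so each $X_i\in\C$ is a coproduct of objects of $\C$) and the arbitrary target $Y=\hocolim_j Y_j$, to obtain a bijection
\[\Hom_{\T/\Ph}(\hocolim_i X_i,\hocolim_j Y_j)\xto{\ _\sim\ }\Hom(\widetilde X,\widetilde Y),\]
and then set $F(g):=\bar h^{-1}(\iota(g))$. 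Because $\iota$ and $\bar h$ are functors and $\bar h$ is faithful on objects of this shape, $F$ preserves identities and composition, so it is a genuine functor; and since $\iota$ is a bijection on each Hom group while $\bar h$ is a bijection on these particular Hom groups, $F=\bar h^{-1}\circ\iota$ is a bijection on Hom groups, i.e.\ fully faithful, by construction. As a sanity check that $F$ descends to the localisation, an eventually invertible $\s$ has $\widetilde\s$ invertible, whence $F(\s)$ is an isomorphism in $\T/\Ph$.

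I expect the main obstacle to be precisely this well-definedness and functoriality: a homotopy colimit is unique only up to a non-unique isomorphism and is not a functor on $\Cau(\bbN,\C)$, so one cannot simply apply $\hocolim$ to a representing left fraction $\s^{-1}\a$ and hope for a functor on $\widehat\C$. The device that removes the difficulty is the pair of fully faithful embeddings into the rigid functor category $\Add(\C^\op,\Ab)$: Lemma~\ref{le:yoneda} identifies the morphisms of $\T/\Ph$ between homotopy colimits with morphisms of the associated functors, Proposition~\ref{pr:completion} identifies those with morphisms in $\widehat\C$, and \eqref{eq:colim} matches the two constructions on objects. Composing these identifications both defines $F$ on morphisms and simultaneously proves it fully faithful, so no separate verification of full faithfulness is required.
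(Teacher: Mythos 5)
Your proposal is correct and follows essentially the same route as the paper: the paper's proof likewise combines the fully faithful embedding $\widehat\C\to\Add(\C^\op,\Ab)$ from Proposition~\ref{pr:completion} with the identity \eqref{eq:colim} and Lemma~\ref{le:yoneda}. You have merely made explicit the bookkeeping (factoring the restricted Yoneda functor through $\T/\Ph$, defining $F$ on morphisms via the resulting bijections, and checking functoriality) that the paper compresses into ``now combine''.
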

\begin{proof}
  We have the functor
\[\widehat\C\lto\Add(\C^\op,\Ab),\quad X\mapsto \widetilde X,\]
which is  fully faithful by Proposition~\ref{pr:completion}.
Now combine \eqref{eq:colim} and Lemma~\ref{le:yoneda}.
\end{proof} 

\begin{defn}
  Let $\C$ be a triangulated category and $\X$ a class of sequences
  $(X_i)_{i\ge 0}$ in $\C$ that is stable under suspensions, i.e.\
  $(\Si^nX_i)_{i\ge 0}$ is in $\X$ for all $n\in\bbZ$. We say that
  $\X$ is \emph{phantomless} if for any pair of sequences $X,Y$ in
  $\X$ we have
\begin{equation}\label{eq:ph-less}
  \sideset{}{^{1}}\lim_i\colim_j\Hom(X_i,Y_j)=0.
\end{equation}
\end{defn}

The following lemma justifies the term `phantomless'.

\begin{lem}\label{le:phantomless}
  Let $\C\subseteq\T$ be a full triangulated subcategory consisting of
  compact objects and $\X$ a class of sequences $(X_i)_{i\ge 0}$ in
  $\C$ that is stable under suspensions. Consider the full subcategory
  \[\D:=\{\hocolim_i X_i\in\T\mid X\in\X\}\subseteq\T.\] Then the following are
  equivalent:
 \begin{enumerate}
 \item The class $\X$ is phantomless.
 \item We have $\Ph(U,V)=0$ for all $U,V\in\D$.
 \item The functor $\D\to\Add(\C^\op,\Ab)$ taking $X$ to
   $\Hom(-,X)|_\C$ is fully faithful.
\item The assignment $X\mapsto\hocolim_i X_i$ yields an
  equivalence $\widehat{\C_\X}\xto{_\sim}\D$.   
\end{enumerate}
In this case the homotopy colimit of a Cauchy sequence in
$\C$ is actually a colimit in $\D$, provided that $\X$ contains all
constant sequences  consisting of identities only.
\end{lem}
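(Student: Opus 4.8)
The plan is to establish the three consecutive equivalences $(1)\Leftrightarrow(2)$, $(2)\Leftrightarrow(3)$, $(3)\Leftrightarrow(4)$ by assembling the three lemmas that precede the statement; the only step involving a genuine idea rather than bookkeeping is $(1)\Leftrightarrow(2)$, and that is where I expect the main obstacle to lie. First I would prove $(1)\Leftrightarrow(2)$. Write $U=\hocolim_iX_i$ and $V=\hocolim_jY_j$ with $X,Y\in\X$. Each $X_i$ lies in $\C$ and is therefore compact, so the $\lim^1$-isomorphism of Lemma~\ref{le:phantom} gives $\Ph(U,\Si V)\cong\sideset{}{^{1}}\lim_i\Hom(X_i,V)$, while Lemma~\ref{le:hocolim} rewrites $\Hom(X_i,V)\cong\colim_j\Hom(X_i,Y_j)$. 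Combining these,
\[\Ph(U,\Si V)\cong\sideset{}{^{1}}\lim_i\colim_j\Hom(X_i,Y_j),\]
so the phantomless condition \eqref{eq:ph-less} is precisely the vanishing of $\Ph(U,\Si V)$ for all $U,V\in\D$. The delicate point is then to absorb the extra suspension: since $\Si$ commutes with homotopy colimits and $\X$ is stable under suspensions, $V\mapsto\Si V$ permutes the objects of $\D$ (with inverse $\Si^{-1}$), so the vanishing of $\Ph(U,\Si V)$ for all $U,V\in\D$ is equivalent to the vanishing of $\Ph(U,V)$ for all $U,V\in\D$, which is $(2)$. Without $\Si$-stability of $\X$ the $\lim^1$-formula only controls phantoms landing in a suspension, so this shift is the crux of the argument.

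The equivalence $(2)\Leftrightarrow(3)$ is then immediate from Lemma~\ref{le:yoneda}: for $U,V\in\D$ the natural map $\Hom(U,V)\to\Hom(h_U,h_V)$ is surjective with kernel $\Ph(U,V)$, so the functor $U\mapsto h_U$ is fully faithful exactly when all these phantom groups vanish. For $(3)\Leftrightarrow(4)$ I would exploit that $X\mapsto\hocolim_iX_i$ and the restricted Yoneda functor fit into a factorisation $\widehat{\C_\X}\to\D\to\Add(\C^\op,\Ab)$ whose composite sends $X$ to $\widetilde X=\colim_ih_{X_i}$ by \eqref{eq:colim} and is fully faithful by construction of $\widehat{\C_\X}$ (as in Proposition~\ref{pr:completion-phantom}). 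The first functor is essentially surjective by the definition of $\D$, and faithful because the composite is; a diagram chase on the factorisation $\Hom(X,Y)\to\Hom(U,V)\to\Hom(h_U,h_V)$ of a bijection shows that the first map is bijective if and only if the second is, which gives $(3)\Leftrightarrow(4)$.

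Finally, for the concluding assertion I would work inside the equivalence $\widehat{\C_\X}\iso\D$ of $(4)$ together with the fully faithful embedding $E\colon\D\to\Add(\C^\op,\Ab)$, $U\mapsto h_U$, supplied by $(3)$. Because $\X$ contains every constant sequence of identities, each term $X_i$ of a Cauchy sequence $X\in\X$ is itself an object of $\D$, and by \eqref{eq:colim} one has
\[E(\hocolim_iX_i)=\widetilde X=\colim_ih_{X_i}=\colim_iE(X_i)\]
in $\Add(\C^\op,\Ab)$, where the cocone is the image under $E$ of the canonical cocone $X_i\to\hocolim_iX_i$. Since colimits in $\Add(\C^\op,\Ab)$ are computed pointwise, this is a genuine colimit, and a fully faithful functor reflects such a colimit: for every $W\in\D$,
\[\Hom_\D(\hocolim_iX_i,W)\cong\Hom(\colim_iE(X_i),E(W))\cong\lim_i\Hom(E(X_i),E(W))\cong\lim_i\Hom_\D(X_i,W).\]
Hence $\hocolim_iX_i$ satisfies the universal property of the colimit of $(X_i)$ in $\D$, as claimed. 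The whole argument is thus essentially an assembly of Lemmas~\ref{le:hocolim}, \ref{le:phantom} and~\ref{le:yoneda}, with the suspension manipulation in $(1)\Leftrightarrow(2)$ being the one non-formal ingredient.
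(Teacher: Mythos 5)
Your proposal is correct and takes essentially the same route as the paper: (1)$\Leftrightarrow$(2) by combining Lemmas~\ref{le:hocolim} and \ref{le:phantom} (with the suspension shift made possible by $\Si$-stability of $\X$), (2)$\Leftrightarrow$(3) by Lemma~\ref{le:yoneda}, the link to (4) via the fully faithful functor into $\Add(\C^\op,\Ab)$ underlying Proposition~\ref{pr:completion-phantom}, and the final assertion from \eqref{eq:colim} together with the fact that a fully faithful embedding reflects colimits. The only cosmetic difference is that you establish (3)$\Leftrightarrow$(4) where the paper establishes (2)$\Leftrightarrow$(4); both rest on the same embedding of $\D$ and of $\widehat{\C_\X}$ into $\Add(\C^\op,\Ab)$.
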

\begin{proof}
  (1) $\Leftrightarrow$ (2): Combine  Lemmas~\ref{le:hocolim} and
  \ref{le:phantom}.

  (2) $\Leftrightarrow$ (3): Apply Lemma~\ref{le:yoneda}.

  (2) $\Leftrightarrow$ (4): Apply
  Proposition~\ref{pr:completion-phantom}.

  The final assertion follows from the identity \eqref{eq:colim} for
  any sequence $X$ in $\C$, since $\D$ identifies with a full
  subcategory of $\Add(\C^\op,\Ab)$.
\end{proof}

Recall that a triangulated category is \emph{algebraic} if it is
triangle equivalent to the stable category $\St(\A)$ of a Frobenius
category $\A$. 
A morphism between exact triangles
\[
\begin{tikzcd}
  X\arrow{r}\arrow{d}&Y\arrow{r}\arrow{d}&
  Z\arrow{r}\arrow{d}&\Si
  X\arrow{d}\\
  X'\arrow{r}&Y'\arrow{r}&Z'\arrow{r}&\Si X'
\end{tikzcd}
\]
in $\St(\A)$ will be called \emph{coherent} if it can be lifted to
a morphism
\[
\begin{tikzcd}
  0\arrow{r}&\tilde X\arrow{r}\arrow{d}& \tilde
  Y\arrow{r}\arrow{d}&
  \tilde  Z\arrow{r}\arrow{d}&0\\
  0\arrow{r}&\tilde X'\arrow{r}&\tilde
  Y'\arrow{r}&\tilde Z'\arrow{r}&0
\end{tikzcd}
\]
between  exact sequences in $\A$ (so that the canonical functor
$\A\to\St(\A)$ maps the second to the first diagram). Note that
any commutative diagram
\[
\begin{tikzcd}
  X\arrow{r}\arrow{d}&Y\arrow{d}\\
  X'\arrow{r}&Y'
\end{tikzcd}
\]
in $\St(\A)$ can be completed to a coherent morphism of exact
triangles as above.

The following theorem establishes a triangulated structure for the
sequential completion of a triangulated category $\C$. Let us stress
that we use a relative version of this result for our applications, as
explained in Remark~\ref{re:bounded} below; it depends on the
choice of a class $\X$ of sequences in $\C$ which is phantomless.

\begin{thm}\label{th:tria-completion}
  Let $\C$ be an algebraic triangulated category, viewed as a
  full subcategory of its sequential completion $\widehat\C$. Suppose
  that the class of Cauchy sequences is phantomless. Then $\widehat\C$
  admits a unique triangulated structure such that the exact triangles
  are precisely the ones isomorphic to colimits of Cauchy sequences
  that are given by coherent morphisms of exact triangles in $\C$.
\end{thm}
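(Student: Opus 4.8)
The plan is to transport a triangulated structure to $\widehat\C$ from a genuine triangulated category in which the completion embeds. Since $\C$ is algebraic, write $\C\simeq\St(\A)$ for a Frobenius category $\A$ and choose a compactly generated algebraic triangulated category $\T$ with $\C\simeq\T^c$ the subcategory of compact objects (for instance the derived category of a suitable dg algebra), so that the homotopy-colimit machinery of this section applies. By Proposition~\ref{pr:completion-phantom} together with Lemma~\ref{le:phantomless}, the hypothesis that the class of Cauchy sequences is phantomless means that $X\mapsto\hocolim_iX_i$ identifies $\widehat\C$ with the full subcategory $\D=\{\hocolim_iX_i\mid X\ \text{Cauchy}\}\subseteq\T$ on which all phantom morphisms vanish; in particular the restricted Yoneda functor is fully faithful on $\D$. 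First I would note that the suspension is forced: $\Si$ is an autoequivalence of $\T$ that preserves Cauchy sequences (the class being stable under suspensions) and hence restricts to a self-equivalence of $\D\simeq\widehat\C$, given on objects by $\Si\colim_iX_i=\colim_i\Si X_i$.

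Next I would pin down the candidate exact triangles and check that they are honest triangles in $\T$. A Cauchy sequence of coherent morphisms of exact triangles in $\C$ is a functor $\bbN\to\T$ whose value at $i$ is an exact triangle $X_i\to Y_i\to Z_i\to\Si X_i$ and whose transition maps are coherent. Taking homotopy colimits levelwise produces the row $\colim_iX_i\to\colim_iY_i\to\colim_iZ_i\to\Si\colim_iX_i$, which is exact in $\T$ because the homotopy colimit of a sequence of exact triangles is again exact (a telescope argument using the octahedral axiom in $\T$). Since phantoms vanish on $\D$, the embedding $\widehat\C\simeq\D\hookrightarrow\T$ both creates and detects these triangles, so declaring them (and their isomorphs in $\widehat\C$) to be the exact triangles is well posed. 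Closure under rotation follows from the corresponding fact in $\St(\A)$: the rotate of a triangle arising from a short exact sequence of $\A$ again arises from a short exact sequence, and rotation carries coherent morphisms to coherent morphisms, so it preserves the candidate class; closure under isomorphism and the presence of trivial triangles are immediate.

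The crux is the existence of cones, and this is where the algebraic hypothesis is essential. Given a morphism of $\widehat\C$, Lemma~\ref{le:cofinal} lets me represent it, after a cofinal reindexing, by an actual morphism of Cauchy sequences $f=(f_i\colon X_i\to Y_i)$. Using $\C\simeq\St(\A)$ I would lift the $f_i$ to morphisms $\tilde f_i$ in the Frobenius category $\A$ and form their functorial mapping cones there, obtaining short exact sequences $0\to\tilde Y_i\to\tilde Z_i\to\Si\tilde X_i\to 0$ together with canonical transition maps; the resulting sequence $(Z_i)$ is then a genuine functor $\bbN\to\C$, each square is coherent by construction, and one checks that it is Cauchy. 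Its colimit $\colim_iZ_i$ supplies a cone on $f$ fitting into a candidate exact triangle. The fill-in axiom is handled the same way: the remark that every commutative square in $\St(\A)$ completes to a coherent morphism of exact triangles is applied levelwise, the Frobenius structure providing compatible choices so that the completions assemble into a morphism of Cauchy sequences rather than an incoherent family.

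I expect the main obstacle to be precisely this last point: producing, for a given morphism or a given octahedral datum in $\widehat\C$, a \emph{single} functor $\bbN\to\C$ of coherent morphisms of triangles whose vertices are Cauchy, rather than an unrelated collection of triangle completions at each stage. The usual non-functoriality of cones in a triangulated category would break the passage to the colimit; it is the lift to short exact sequences in $\A$ and the functoriality of the mapping cone there that repair this, and this is exactly the role played by coherence (and, more conceptually, by the morphic enhancement of the final appendix). Once cones and fill-ins are available compatibly, the octahedral axiom is obtained by lifting the three triangles and their maps to a $3\times3$ diagram of short exact sequences in $\A$ and taking colimits, and uniqueness is automatic: the suspension is forced as above and the exact triangles are prescribed, so any two triangulated structures with this class of exact triangles coincide.
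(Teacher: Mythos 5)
Your overall strategy coincides with the paper's: realise $\C$ as a full subcategory of compact objects in an ambient algebraic triangulated category $\T$, use the phantomless hypothesis via Proposition~\ref{pr:completion-phantom} and Lemma~\ref{le:phantomless} to identify $\widehat\C$ with $\D=\{\hocolim_i X_i\mid X\ \text{Cauchy}\}\subseteq\T$, declare the exact triangles to be the (isomorphs of) colimits of Cauchy sequences of coherent morphisms of triangles, and obtain cones by lifting a morphism of Cauchy sequences to the Frobenius category. There is, however, one genuine gap. You justify exactness of the candidate triangles by asserting that ``the homotopy colimit of a sequence of exact triangles is again exact (a telescope argument using the octahedral axiom in $\T$).'' No such general fact holds in a bare triangulated category: the levelwise homotopy colimit is the mapping cone of the morphism of triangles $\id-\text{shift}$ on the coproduct triangle, \emph{all three} of whose components are prescribed, and the mapping cone of a given morphism of triangles need not be exact (this is precisely the ``good morphism'' phenomenon: one may complete a commutative square to a morphism of triangles with exact cone, but here there is nothing left to choose). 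This step is exactly where algebraicity and coherence must be used, and it is what step (1) of the paper's proof does: lift the coherent sequence $\eta_0\to\eta_1\to\cdots$ to a sequence of short exact sequences in $\A$, pass to the countable envelope $\A\tilde{\phantom{e}}$ where the countable coproducts exist, and take cokernels of the vertical maps; exactness of the resulting triangle in $\T=\St(\A\tilde{\phantom{e}})$ is a computation in the exact category, not a consequence of the octahedral axiom. You have the coherence hypothesis available (and you invoke the Frobenius lift correctly in your cone-existence paragraph), so the repair is the mechanism you yourself describe there --- but as written, the exactness of the proposed triangles rests on a false general principle.

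A second, more minor, inaccuracy: you posit a compactly generated algebraic $\T$ with $\C\simeq\T^c$. Such a $\T$ need not exist, since $\T^c$ is always idempotent complete while $\C$ need not be; moreover, producing \emph{any} suitable ambient category is itself a step of the proof. The paper's countable envelope solves both problems at once: $\T=\St(\A\tilde{\phantom{e}})$ has countable coproducts (all that the homotopy-colimit machinery requires) and contains $\C$ as a full triangulated subcategory of compact objects, not necessarily all of them --- and Lemma~\ref{le:phantomless} is stated for exactly this weaker situation. Finally, note that once one knows that (i) every candidate triangle is exact in $\T$ and (ii) every cone in $\T$ on a morphism of $\D$ is isomorphic to a candidate, the remaining axioms (rotation, fill-ins, octahedra) for $\D\simeq\widehat\C$ are inherited from $\T$; your levelwise verification of rotation and of the octahedral axiom is then unnecessary, which also spares you the delicate claim that rotation preserves coherence.
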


Let us spell out the triangulated structure for $\widehat\C$. Fix a
sequence of coherent morphisms $\eta_0\to\eta_1\to\eta_2\to\cdots$ of
exact triangles
  \[\eta_i\colon X_i\lto Y_i\lto Z_i\lto\Si X_i\] in $\C$ and suppose
  that it is also a sequence of morphisms $X\to Y\to Z\to \Si X$ of Cauchy
  sequences in $\C$. This identifies with the sequence
  \[\colim_i X_i \lto \colim_i Y_i\lto \colim_i Z_i\lto \colim_i\Si X_i\]
in $\widehat \C$, and the exact triangles in  $\widehat \C$ are
precisely sequences of morphisms that are isomorphic to sequences of
the above form.

Theorem~\ref{th:morphic-completion} provides a substantial
generalisation, from algebraic triangulated categories to triangulated
categories with a morphic enhancement. Moreover, in some interesting
cases the morphic enhancement extends to a morphic enhancement of the
completion; see Section~\ref{ss:completion-morphic-enrich}. 

\begin{proof}
  The proof is given in several steps.

  (1) The assumption on $\C$ to be algebraic implies that $\C$
  identifies with the stable category $\St(\A)$ of a Frobenius
  category $\A$. Let $\A\tilde{\phantom{e}}$ denote the countable
  envelope of $\A$ which is a Frobenius category containing $\A$ as a full
  exact subcategory; see Example~\ref{ex:countable-envelope}. Then $\C$
  identifies with a full triangulated subcategory of compact objects
  of $\T:=\St(\A\tilde{\phantom{e}})$.

 For any sequence of coherent morphisms
  $\eta_0\to\eta_1\to\eta_2\to\cdots$ of exact triangles
  \[\eta_i\colon X_i\lto Y_i\lto Z_i\lto\Si X_i\] in $\C$ there is in
  $\T$ an induced exact
  triangle
  \begin{equation}\label{eq:tria-colim}
    \hocolim_i X_i\lto \hocolim_i Y_i\lto \hocolim_i Z_i\lto\Si(\hocolim_i
    X_i).
  \end{equation}
  Let us sketch the argument. We can lift the
  sequence $(\eta_i)_{i\ge 0}$ to a sequence of
  exact sequences 
   \[\tilde\eta_i\colon 0\lto \tilde X_i\lto \tilde Y_i\lto \tilde
    Z_i\lto 0\] in $\A$ and obtain a commutative diagram with
   exact rows
\[
\begin{tikzcd}
  0\arrow{r}&\coprod_{i\ge 0}\tilde X_i\arrow{r}\arrow{d}&
  \coprod_{i\ge 0}\tilde Y_i\arrow{r}\arrow{d}&
  \coprod_{i\ge 0}\tilde Z_i\arrow{r}\arrow{d}&0\\
  0\arrow{r}&\coprod_{i\ge 0}\tilde X_i\arrow{r}&
  \coprod_{i\ge 0}\tilde Y_i\arrow{r}&
  \coprod_{i\ge 0}\tilde Z_i\arrow{r}&0
\end{tikzcd}
\]
in $\A\tilde{\phantom{e}}$.  The vertical morphism are induced by the
morphisms $\tilde\eta_i\to \tilde\eta_{i+1}$, and taking mapping cones
of the vertical morphisms (given by cokernels in
$\A\tilde{\phantom{e}}$) yields the desired exact triangle
\eqref{eq:tria-colim}.

(2) The assumption on Cauchy sequences in $\C$ to be phantomless
implies that the functor $\widehat\C\to\T$ taking a sequence to its
homotopy colimit induces an equivalence
  \[\widehat\C\xto{\ _\sim\ }\D:=\{\hocolim_i X_i\in\T\mid X\in\Cau(\bbN,\C)\}.\]
  This follows from Lemma~\ref{le:phantomless}. In particular, the
  homotopy colimit of a Cauchy sequence in $\C$ is actually a
  colimit in $\D$.

  (3) We claim that $\D$ is a triangulated subcategory of $\T$ and
  that the exact triangles in $\D$ are up to isomorphism the colimits
  of Cauchy sequences given by coherent morphisms of exact triangles in $\C$.

  For a Cauchy sequence given by coherent morphisms of exact triangles
  $X_i\to Y_i\to Z_i\to\Si X_i$ in $\C$, we form in $\D$ its colimit
  and obtain an exact triangle \eqref{eq:tria-colim}; it does
  not depend on any choices.

  Conversely, fix an exact triangle
  $\eta\colon \bar X\to \bar Y\to \bar Z\to \Si \bar X$ in $\D$ that
  is given by $ X,Y\in\Cau(\bbN,\C)$ with $\bar X=\colim_i X_i$ and
  $\bar Y=\colim_i Y_i$. The morphism $\bar X\to \bar Y$ is up to
  isomorphism given by a morphism $\p\colon X\to Y$ in
  $\Cau(\bbN,\C)$, so of the form $\colim_i\p_i$.  Now complete the
  $\p_i\colon X_i\to Y_i$ to a sequence of coherent morphisms between
  exact triangles $X_i\to Y_i\to Z_i\to\Si X_i$ in $\C$. It is easily
  checked that $(Z_i)_{i\ge 0}$ is a Cauchy sequence; set
  $\tilde Z:=\colim_i Z_i$. This yields an exact triangle
  $\eta'\colon \bar X\to\bar Y\to\tilde Z\to\Si\bar X$ in $\T$,
  keeping in mind the above remark about homotopy colimits of exact
  triangles. It follows that $\D$ is closed under the formation of
  cones and therefore a triangulated subcategory of $\T$. Clearly,
  $\eta$ and $\eta'$ are isomorphic triangles. Thus any exact triangle
  in $\D$ is up to isomorphism a colimit of exact triangles in $\C$.
\end{proof}

\begin{cor}\label{co:tria-completion}
  Let $\T$ be an algebraic triangulated category with countable
  coproducts and $\C\subseteq\T$ a full triangulated subcategory
  consisting of compact objects. Suppose  the class of Cauchy
  sequences in $\C$ is phantomless. Then the full subcategory
  \[\{\hocolim_i X_i\in\T\mid X\in\Cau(\bbN,\C)\}\subseteq\T\] is a triangulated
  subcategory which is triangle equivalent to $\widehat\C$.  The exact
  triangles are precisely the ones isomorphic to colimits of Cauchy
  sequences given by coherent morphisms of exact triangles in
  $\C$.\qed
\end{cor}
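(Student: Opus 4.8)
The plan is to derive the corollary from Theorem~\ref{th:tria-completion} and Lemma~\ref{le:phantomless} by running the argument of the theorem inside the given category $\T$ rather than inside the auxiliary stable category used in its proof. First I would record that $\C$ is algebraic: fixing a Frobenius category $\B$ with $\T=\St(\B)$, the full subcategory $\A\subseteq\B$ of those objects whose image in $\T$ lies in $\C$ is a Frobenius subcategory with $\St(\A)=\C$ (it is extension-closed and contains the projective-injective objects of $\B$). Hence Theorem~\ref{th:tria-completion} applies and equips $\widehat\C$ with its triangulated structure, while Lemma~\ref{le:phantomless}, applied to the phantomless class $\X=\Cau(\bbN,\C)$, provides an additive equivalence $\widehat\C\iso\D$ sending a Cauchy sequence to its homotopy colimit in $\T$. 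It remains to prove that $\D$ is a triangulated subcategory of $\T$ and that this equivalence matches the exact triangles on both sides.

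The decisive step is to reproduce step~(1) of the proof of Theorem~\ref{th:tria-completion} in $\T$: for a Cauchy sequence of coherent morphisms of exact triangles $X_i\to Y_i\to Z_i\to\Si X_i$ in $\C$, the homotopy colimits
\[\hocolim_i X_i\lto\hocolim_i Y_i\lto\hocolim_i Z_i\lto\Si\hocolim_i X_i\]
form an exact triangle in $\T$. Here I would use that, since $\T$ is algebraic with countable coproducts, the model $\B$ may be chosen to admit countable coproducts that are preserved by $\B\to\T$. By the coherence hypothesis the given data lift to a sequence of morphisms of short exact sequences $0\to\tilde X_i\to\tilde Y_i\to\tilde Z_i\to 0$ in $\A\subseteq\B$; forming countable coproducts yields a short exact sequence in $\B$ carrying the endomorphism $\id-\p$ of conflations, and computing its mapping cone in the Frobenius category exactly as in the theorem---as a cokernel, after enlarging the lifts to a sequence of inflations---produces a short exact sequence of homotopy colimits in $\B$, hence the asserted exact triangle in $\T$.

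Granting this, the remaining points follow as in step~(3) of the theorem. A morphism in $\D$ is, up to isomorphism, induced by a morphism $\p\colon X\to Y$ of Cauchy sequences in $\C$; completing the commutative squares of $\p$ to coherent morphisms of exact triangles $X_i\to Y_i\to Z_i\to\Si X_i$ (possible by the remark preceding Theorem~\ref{th:tria-completion}) gives a Cauchy sequence $(Z_i)$, and by the previous paragraph $\hocolim_i Z_i\in\D$ is a cone of the given morphism in $\T$. Thus $\D$ is closed under cones, hence a full triangulated subcategory of $\T$; moreover every exact triangle in $\D$ is isomorphic to one of the above colimits. The additive equivalence $\widehat\C\iso\D$ therefore matches the exact triangles and is a triangle equivalence of the required form.

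The main obstacle is the claim displayed in the second paragraph. In an arbitrary triangulated category a sequence of morphisms of exact triangles need not induce an exact triangle of homotopy colimits, because mapping cones are not functorial; it is precisely the coherence hypothesis, read off in the Frobenius model $\B$, that upgrades the morphism of triangles to a genuine morphism of short exact sequences and so makes the $3\times3$-lemma applicable. The essential technical input is therefore the algebraic realisation of the ambient countable coproducts, namely a Frobenius model of $\T$ with countable coproducts compatible with those of $\T$ (available since $\T$ is algebraic with countable coproducts, for instance through a differential graded model). Equivalently, one may construct a coproduct-preserving triangle functor $\St(\A\tilde{\phantom{e}})\to\T$ restricting to the inclusion of $\C$ and transport along it the triangulated structure produced by the theorem.
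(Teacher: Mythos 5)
Your proposal is correct and takes essentially the approach the paper intends: the corollary is stated without a separate proof precisely because it follows by repeating steps (1)--(3) of the proof of Theorem~\ref{th:tria-completion} inside the given ambient category $\T$, which is exactly what you do --- Lemma~\ref{le:phantomless} supplies the additive equivalence $\widehat\C\iso\D$, lifting a coherent Cauchy sequence of triangles to a Frobenius model gives exactness of the colimit triangle, and completing commutative squares to coherent morphisms gives closure of $\D$ under cones. The one input the paper leaves implicit, and which you rightly isolate as the essential technical point, is a Frobenius model of $\T$ whose countable coproducts induce those of $\T$ (equivalently, a coproduct-preserving triangle functor $\St(\A\tilde{\phantom{e}})\to\T$ extending $\C\hookrightarrow\T$); this is available in all of the paper's applications, where $\T$ is $\bfD(\La)$, $\bfD(\Qcoh\bbX)$ or $\bfK(\Inj\A)$.
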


For a generalisation of Corollary~\ref{co:tria-completion} from
algebraic triangulated categories to triangulated categories with a
morphic enhancement, see Section~\ref{ss:morphic-compact-completion}.

\begin{rem}\label{re:purity}
  To be phantomless is a condition which can be checked for a specific
  sequence $Y=(Y_i)_{i\ge 0}$ in $\C$.  Given $C\in\C$, call a
  subgroup
  \[U\subseteq\widetilde Y(C)=\colim_j\Hom(C,Y_j)\] \emph{of finite
    definition} if it arises as the image of a map
  $\widetilde Y(D)\to \widetilde Y(C)$ for some morphism $C\to D$ in $\C$;
  see \cite{GJ1981}. The descending chain condition (dcc) for subgroups of
  finite definition implies that \eqref{eq:ph-less} holds for all
  sequences $X$ in $\C$,
  since it implies the Mittag-Leffler condition for
  \[\cdots \lto \widetilde Y(X_2)\lto \widetilde Y(X_1) \lto \widetilde
    Y(X_0).\] The dcc for subgroups of finite definition  is equivalent
  to $Y$ being $\Si$-pure-injective when viewed as an object in
  $\Ind(\C)$, by \cite[\S3.5]{CB1994}. On the other hand, when $\T$ is a
  compactly generated triangulated category, then $Z\in\T$ is
  pure-injective if and only if $\Ph(-,Z)=0$, by
  \cite[Theorem~1.8]{Kr2000}.
\end{rem}

Let $\C$ be a triangulated category and fix a cohomological functor
$H\colon\C\to\A$ into an abelian category. Set $H^n:=H\circ\Si^n$ for
$n\in\bbZ$.  We call a sequence $(X_i)_{i\ge 0}$ in $\C$
\emph{bounded} if
\[\colim_iH^n(X_i)=0\quad\text{for} \quad |n|\gg 0\] and write
\[{\widehat\C}^b:=\{X\in\widehat\C\mid X \textrm{ bounded}\}\]
for the full subcategory of bounded objects.

\begin{rem}\label{re:bounded}  
  Suppose for all $C\in\C$ that $H^n(C)=0$ for $|n|\gg 0$.  Then we
  may restrict ourselves to bounded Cauchy sequences, and if this
  class is phantomless, then the conclusion of
  Theorem~\ref{th:tria-completion} holds for $\widehat\C^b$.

  More generally, fix a class $\X\subseteq\Fun(\bbN,\C)$ that is
  closed under suspensions and cones. When $\X$ is phantomless, then
  the conclusion of Corollary~\ref{co:tria-completion} holds for
  \[\widehat{\C_\X}\xto{\ _\sim\ }\{\hocolim_i X_i\in\T\mid X\in\X\}.\] For more details, cf.\
  Section~\ref{ss:completion}.
\end{rem}

\begin{exm}
  Let $\La$ be a quasi-Frobenius ring of finite representation type.
  Then the class of all sequences in the stable category
  $\St(\mod\La)$ is phantomless. In fact, this holds for any locally
  finite triangulated category \cite{Kr2012,XZ2005} and can be
  deduced from Remark~\ref{re:purity}.
\end{exm}

The following example is a continuation of our discussion in
Section~\ref{se:abelian}.  For an abelian category $\A$ let
$\bfD^b(\A)$ denote its bounded derived category.  An object in $\A$
is \emph{locally finite} if it is a directed union of finite length
subobjects.

\begin{exm}\label{ex:D(flA)}
  Let $k$ be a commutative ring and $\A$ a $k$-linear Grothendieck
  category such that $\Hom(X,Y)$ is a finite length $k$-module for all
  $X,Y\in\fl\A$.  Suppose that there are only finitely many
  isomorphism classes of simple objects and that the injective
  envelope of each simple object is locally finite and artinian. Then
  the class of Cauchy sequences in $\bfD^b(\fl\A)$ is phantomless and
  we have triangle equivalences
  \[\bfD^b(\widehat{\fl\A}) \xto{\
      _\sim\ }\bfD^b(\art\A)\xto{\ _\sim\ }\widehat{\bfD^b(\fl\A)}^b.\]
\end{exm}
\begin{proof}
  The first equivalence is clear from
  Proposition~\ref{pr:ind-artinian}; so we focus on the second one.
  We may assume that all objects in $\A$ are locally finite. For all
  $X,Y\in\bfD^b(\fl\A)$ the $k$-module $\Hom(X,Y)$ has finite length,
  since $\Ext^n(S,T)$ has finite length for all simple $S,T$ and
  $n \ge 0$ by our assumptions on $\A$.  It follows that the class of
  Cauchy sequences in $\bfD^b(\fl\A)$ is phantomless by the
  Mittag-Leffler condition.  We wish to apply
  Corollary~\ref{co:tria-completion} and choose for $\T$ the category
  $\bfK(\Inj\A)$ of complexes up to homotopy, where $\Inj\A$ denotes
  the full subcategory of injective objects in $\A$. Set
  $\inj\A=\Inj\A\cap\art\A$.  Then $\bfD^b(\fl\A)$ identifies with
  $\T^c$ via
  \[F\colon\bfD^b(\art\A)\iso \bfK^{+,b}(\inj\A)
    \hookrightarrow\bfK(\Inj\A),\]
  by \cite[Proposition~2.3]{Kr2005}. Set $\C:=\bfD^b(\fl\A)$.  Then
  Corollary~\ref{co:tria-completion} yields a triangle equivalence
 \[\widehat\C\xto{\ _\sim\ }\D:=\{\hocolim_i X_i\in\T\mid X\in\Cau(\bbN,\C)\}.\]
 We claim
 that $F$ induces an equivalence
 \[\bfD^b(\art\A)\iso\D^b:=\{X\in\D\mid \colim_iH^n(X_i)=0\text{ for } |n|\gg 0\}.\]
 Any object $M\in\art\A$ is the colimit of the Cauchy sequence
 $(\soc^iM)_{i\ge 0}$ in $\fl\A$ by Proposition~\ref{pr:ind-artinian},
 and this yields a Cauchy sequence in $\bfD^b(\fl\A)$. Thus $F$ maps
 $\art\A$ into $\D^b$, and therefore also $\bfD^b(\art\A)$, since
 $F$ is an exact functor and $\bfD^b(\art\A)$ is generated by $\art\A$
 as a triangulated category. Conversely, let $\bar X=\colim_i X_i$ be
 an object in $\D^b$. We may assume that the complex $\bar X$ is
 homotopically minimal, as in \cite[Appendix~B]{Kr2005}. The Cauchy
 condition implies for each simple $S\in\A$ and $n\in\bbZ$ that $\Hom(S,\Si^n \bar X)$ has
 finite length over $k$, so the degree $n$ component of $\bar X$ is
 artinian. Thus $\bar X$ belongs to $ \bfK^{+,b}(\inj\A)$, and this
 yields the claim.
\end{proof}

\section{Homologically perfect objects}

Let $\T$ be a compactly generated triangulated category and denote by
$\T^c$ the full subcategory of compact objects. We fix a cohomological
functor $H\colon\T\to\A$ into an abelian category.  For $n\in\bbZ$
set $H^n:=H\circ\Si^n$.

\begin{defn}
  We say that an object $X$ in $\T$ is \emph{homologically perfect}
  (with respect to $H$) if $X$ can be written as homotopy colimit of a
  sequence $X_0\to X_1\to X_2\to\cdots$ in $\T^c$ such that the
  following holds:
  \begin{enumerate}
  \item[(HP1)] The sequence $(X_i)_{i\ge 0}$ is a Cauchy sequence in
    $\T^c$, that is, for every $C\in \T^c$
\[\Hom(C,X_i)\xto{_\sim}\Hom(C,X_{i+1})\quad\textrm{for}\quad i\gg 0.\]
\item[(HP2)] For every $n\in\bbZ$ we have $H^n(X_i)\iso H^n(X_{i+1})$
  for $i\gg 0$.
\item[(HP3)] For almost all $n\in\bbZ$ we have $H^n(X_i)=0$ for
  $i\gg 0$.
\end{enumerate}
When $\T$ is the derived category of an abelian category, then our
choice of $H$ is the natural one given by the degree zero cohomology
of a complex, unless stated otherwise.
\end{defn}

It is clear that the above definition depends on the choice of $H$; so
a different choice of $H$ may yield a different class of homologically
perfect objects. However, in our applications there are natural
choices for $H$, for instance when $\T$ is the derived category of an
abelian category. It is a remarkable fact that in those cases
there is an intrinsic description of homologically perfect objects
that depends only on $\T$. That means some choices of $H$ are more
natural than others.  The following Lemma~\ref{le:intrinsic} makes
this precise when the cohomological functor $H$ is given by a compact
generator. For noetherian schemes that are non-affine, the
 natural choice for $H$ admits the same intrinsic
description of homologically perfect objects, but the proof is more
involved and we refer to Theorem~\ref{thm:intrinsic}.

We begin with an elementary but useful observation.

\begin{lem}\label{le:cauchy-generator}
  Let $\C$ be a triangulated category and $G\in\C$ an object that
  generates $\C$, that is, $\C$ admits no proper thick subcategory
  containing $G$. Then a sequence $X_0\to X_1\to X_2\to\cdots$ in $\C$
  is Cauchy if and only if for all $n\in\bbZ$ we have
  $\Hom(\Si^n G,X_i)\xto{_\sim}\Hom(\Si^n G,X_{i+1})$ for
  $i\gg 0$.\qed
\end{lem}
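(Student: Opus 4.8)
The plan is to prove the nontrivial implication by a thick-subcategory argument; the forward direction is immediate, since if the sequence is Cauchy then $\Hom(C,X_i)\xto{_\sim}\Hom(C,X_{i+1})$ for $i\gg0$ for \emph{every} $C\in\C$, in particular for $C=\Si^nG$. For the converse, the first move is to repackage the target condition into a form stable under suspension. Call an object $C\in\C$ \emph{good} if for every $n\in\bbZ$ one has $\Hom(\Si^nC,X_i)\xto{_\sim}\Hom(\Si^nC,X_{i+1})$ for $i\gg0$, where the threshold may depend on $n$. The sequence is Cauchy exactly when every object is good: Cauchyness tests against all objects, including all shifts $\Si^nC$, and conversely goodness for all objects specialises at $n=0$ to the Cauchy condition. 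Since the hypothesis says precisely that $G$ is good, it suffices to show that the full subcategory $\calS$ of good objects is thick; as $\C$ has no proper thick subcategory containing $G$, this forces $\calS=\C$.

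Checking thickness proceeds in three steps. First, $\calS$ is closed under suspension and desuspension, because the defining condition already quantifies over all shifts at once: reindexing $n\mapsto n+1$ shows $\Si C$ is good iff $C$ is. Second, $\calS$ is closed under direct summands: if $C\oplus C'$ is good, then for each fixed $n$ the transition map $\Hom(\Si^n(C\oplus C'),X_i)\to\Hom(\Si^n(C\oplus C'),X_{i+1})$ is, by additivity of $\Hom(-,X)$, the direct sum of the transition maps for $\Si^nC$ and $\Si^nC'$, and a direct sum of homomorphisms of abelian groups is invertible iff each summand is.

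The only delicate step is closure under cones. Given an exact triangle $C'\to C\to C''\to\Si C'$ with $C',C''\in\calS$, fix $n\in\bbZ$ and apply the cohomological functor $\Hom(-,-)$ to the triangle, using naturality in the sequence map $X_i\to X_{i+1}$, to obtain a commutative ladder whose rows are the long exact sequences of the triangle. The four terms neighbouring $\Hom(\Si^nC,X_i)$ are then $\Hom(\Si^{n+1}C',X_i)$, $\Hom(\Si^nC'',X_i)$, $\Hom(\Si^nC',X_i)$ and $\Hom(\Si^{n-1}C'',X_i)$, all of which are shifts of $C'$ or $C''$. Since $C'$ and $C''$ are good, the transition maps on these four terms are invertible for $i\gg0$; taking the maximum of these finitely many thresholds and applying the five lemma shows the transition map on $\Hom(\Si^nC,X_i)$ is invertible for $i\gg0$, so $C$ is good. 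The main obstacle is exactly this threshold bookkeeping: the bound ``$i\gg0$'' depends both on the test object and on the degree $n$, so there is no single uniform cutoff. The resolution is to run the five lemma one degree $n$ at a time, where only finitely many shifts of $C'$ and $C''$ intervene and a common threshold therefore exists. Having shown $\calS$ is thick and contains $G$, the generation hypothesis yields $\calS=\C$, and specialising to $n=0$ shows the sequence is Cauchy.
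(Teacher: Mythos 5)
Your proof is correct, and it is exactly the argument the paper has in mind: the paper states this lemma with a \qed and no proof, introducing it as ``an elementary but useful observation,'' so the intended justification is precisely your argument that the objects $C$ satisfying $\Hom(\Si^n C,X_i)\xto{_\sim}\Hom(\Si^n C,X_{i+1})$ for $i\gg 0$ (for every $n\in\bbZ$, with threshold allowed to depend on $n$) form a thick subcategory --- closed under shifts by reindexing, under summands by additivity, and under cones by the five lemma applied degreewise so that only finitely many thresholds need to be reconciled --- which must then be all of $\C$ since it contains the generator $G$. Your handling of the threshold bookkeeping (running the five lemma one degree at a time) is the right way to make the omitted argument rigorous.
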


The following yields  an intrinsic description of homologically perfect objects.
  
\begin{lem}\label{le:intrinsic}
  Let $G$ be a compact object in $\T$ that generates $\T^c$ as a
  triangulated category. Then for $X\in\T$ the following are equivalent:
 \begin{enumerate}
  \item The object $X$ is homologically perfect with
    respect to $H=\Hom(G,-)$.
    \item The object $X$ can be written as
  homotopy colimit of a Cauchy sequence in
  $\T^c$, and for every $C\in \T^c$ we have $\Hom(C,\Si^nX)=0$ for $|n|\gg 0$.
\end{enumerate}
\end{lem}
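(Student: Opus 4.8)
The plan is to observe that both conditions assert that $X$ is the homotopy colimit of a Cauchy sequence $(X_i)_{i\ge 0}$ in $\T^c$, so the real task is to match the remaining finiteness conditions against the vanishing condition in~(2). First I would record that, for the functor $H=\Hom(G,-)$, condition (HP2) is automatic once (HP1) holds: since $H^n(X_i)=\Hom(G,\Si^nX_i)=\Hom(\Si^{-n}G,X_i)$ and $G$ generates $\T^c$, Lemma~\ref{le:cauchy-generator} shows that a sequence in $\T^c$ is Cauchy if and only if $H^n(X_i)\iso H^n(X_{i+1})$ for $i\gg 0$ and all $n$, which is exactly (HP2). Thus in~(1) the pair (HP1)+(HP2) is equivalent to the single Cauchy condition appearing in~(2).

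Next I would translate (HP3) into a condition on $X$ itself. Because each $\Si^{-n}G$ is compact, Lemma~\ref{le:hocolim} gives $\Hom(G,\Si^nX)=\Hom(\Si^{-n}G,X)=\colim_i\Hom(\Si^{-n}G,X_i)=\colim_iH^n(X_i)$, and since $H^n(X_i)$ stabilises by (HP2), this colimit equals the eventual value of $H^n(X_i)$. Hence (HP3) holds if and only if $\Hom(G,\Si^nX)=0$ for $|n|\gg 0$. So both~(1) and~(2), modulo the common Cauchy hypothesis, reduce to a vanishing statement: in~(1) it is vanishing of $\Hom(G,\Si^nX)$ for $|n|\gg 0$, while in~(2) it is vanishing of $\Hom(C,\Si^nX)$ for \emph{all} compact $C$.

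The implication (2) $\Rightarrow$ (1) is then immediate by specialising $C=G$. The substantive direction is (1) $\Rightarrow$ (2), and the key step is a thick-subcategory argument. I would fix $X$ and consider
\[\calS:=\{C\in\T^c\mid \Hom(C,\Si^nX)=0\ \text{for}\ |n|\gg 0\}.\]
Applying $\Hom(-,\Si^nX)$ to triangles and using additivity, one checks directly that $\calS$ is closed under shifts, under cones (via the long exact sequence, taking the maximum of the two bounds), and under direct summands; hence $\calS$ is a thick subcategory of $\T^c$. Since the second paragraph shows $G\in\calS$, and $G$ generates $\T^c$, we conclude $\calS=\T^c$, which is precisely the vanishing in~(2).

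The main obstacle, such as it is, lies in the thickness verification: one must confirm that the bounds ``$|n|\gg 0$'', which a priori depend on $C$, behave well under the triangulated operations. This is harmless because each closure operation involves only finitely many objects at a time, so one may always pass to the maximum of the finitely many bounds; no uniformity across all of $\T^c$ is required. Once this is in place, the generation hypothesis does the rest.
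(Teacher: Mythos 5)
Your proof is correct and follows essentially the same route as the paper's: it uses Lemma~\ref{le:cauchy-generator} to identify (HP1) with (HP2), Lemma~\ref{le:hocolim} to translate (HP3) into the vanishing of $\Hom(G,\Si^nX)$ for $|n|\gg 0$, and the generation hypothesis to pass from $G$ to all compact objects. Your explicit thick-subcategory verification is simply the detail behind the paper's terse ``since $G$ generates $\T^c$,'' so there is no substantive difference.
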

\begin{proof}
  The assumption on $H$ implies that the conditions (HP1) and (HP2)
  are equivalent, thanks to Lemma~\ref{le:cauchy-generator}. Condition
  (HP3) means $H^n(X)=0$ for $|n|\gg 0$, since  $H^n(X)\cong\colim_iH^n(X_i)$
  by Lemma~\ref{le:hocolim}. Thus (HP3) is equivalent to the
  condition that for every $C\in \T^c$ we have $\Hom(C,\Si^nX)=0$ for
  $|n|\gg 0$, since $G$ generates $\T^c$.
\end{proof}

Now fix a ring $\La$. We write $\bfD(\La)$ for the derived category of
the abelian category of all $\La$-modules. Let $\bfD^\per(\La)$ denote
the full subcategory of \emph{perfect complexes}, that is, objects
isomorphic to bounded complexes of finitely generated projective
modules. The triangulated category $\bfD(\La)$ is compactly generated
and the compact objects are precisely the perfect complexes.

Let $\mod\La$ denote the category of finitely presented $\La$-modules
and $\proj\La$ denotes the full subcategory of finitely generated
projective modules.  When $\La$ is a right coherent ring,
then $\mod\La$ is abelian and we consider its derived category
$\bfD^b(\mod\La)$ using the following identifications
\[\begin{tikzcd}
    \bfK^b(\proj\La)\arrow[r,hook]\arrow{d}{\wr}& \bfK^{-,b}(\proj\La)\arrow{d}{\wr}\\
    \bfD^\per(\La)\arrow[r,hook]&\bfD^b(\mod\La)
  \end{tikzcd}\] where the top row consists of categories of complexes
of modules in $\proj\La$ up to homotopy.  Note that
$\bfD^\per(\La)=\bfD^b(\mod\La)$ if and only if every finitely
presented $\La$-module has finite projective dimension.

We provide an intrinsic description of the objects from
$\bfD^b(\mod\La)$,  which  uses for any complex $X$ the sequence of truncations
\[\cdots\lto \s_{\ge n+1}X\lto \s_{\ge n}X\lto \s_{\ge n-1}X\lto\cdots\]
given by
\[
\begin{tikzcd}
\s_{\ge n}X\arrow{d}& \cdots \arrow{r}&
  0\arrow{r}{}\arrow{d}&0\arrow{r}{}\arrow{d}&
X^{n}\arrow{r}\arrow{d}{\id}&X^{n+1}\arrow{r}\arrow{d}{\id}&\cdots \\ 
X&\cdots\arrow{r}&
X^{n-2}\arrow{r}{}&X^{n-1}\arrow{r}{}&X^n\arrow{r}&X^{n+1}\arrow{r}&\cdots .
\end{tikzcd}
\]

In the following lemma we use the notion of a homologically perfect
object with respect to the functor that takes degree zero cohomology of a
complex,  keeping in mind Lemma~\ref{le:intrinsic}.

\begin{lem}\label{le:coherent-perf-cohom}
  Let $\La$ be a right coherent ring. Then $X$ in $\bfD(\La)$ is
  homologically perfect if and only if $X$ belongs to
  $\bfD^b(\mod\La)$.
\end{lem}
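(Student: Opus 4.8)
The plan is to reduce the statement to a comparison of cohomology groups, exploiting the intrinsic reformulation of homological perfection from Lemma~\ref{le:intrinsic}. Since $\La$ is a compact generator of $\bfD(\La)$ and the relevant cohomological functor is $H=H^0=\Hom(\La,-)$, that lemma tells us that $X$ is homologically perfect precisely when $X=\hocolim_i X_i$ for some Cauchy sequence $(X_i)_{i\ge 0}$ in $\bfD^\per(\La)$ with $H^n(X)=0$ for $|n|\gg 0$. I would then match this against the identification $\bfD^b(\mod\La)\simeq\bfK^{-,b}(\proj\La)$ recorded just before the lemma, under which an object of $\bfD(\La)$ lies in $\bfD^b(\mod\La)$ if and only if its cohomology is bounded and finitely presented in every degree; this is exactly where right coherence of $\La$ enters, guaranteeing that $\mod\La$ is abelian.

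For the forward implication I would argue as follows. Given $X$ homologically perfect, Lemma~\ref{le:hocolim} gives $H^n(X)\cong\colim_i H^n(X_i)$, and the Cauchy condition (HP1), equivalent to the stabilisation (HP2) by Lemma~\ref{le:intrinsic}, shows that this colimit is attained, so $H^n(X)\cong H^n(X_i)$ for $i\gg 0$. Each $X_i$ is a bounded complex of finitely generated projectives, so its cohomology in each degree is a subquotient formed inside the abelian category $\mod\La$ and is therefore finitely presented; hence so is $H^n(X)$. Boundedness of the cohomology of $X$ is exactly the vanishing $H^n(X)=0$ for $|n|\gg 0$ built into homological perfection. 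Thus $X$ has bounded finitely presented cohomology and lies in $\bfD^b(\mod\La)$.

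For the converse I would produce an explicit Cauchy sequence of perfect complexes. Represent $X\in\bfD^b(\mod\La)$ by a complex in $\bfK^{-,b}(\proj\La)$ and set $X_i:=\s_{\ge -i}X$, the brutal truncations. Each $X_i$ is a bounded complex of finitely generated projectives, hence perfect; the transition maps are degreewise split injections with colimit $X$, so $\hocolim_i X_i\cong\colim_i X_i\cong X$. For fixed $n$ and $i>-n$ one has $H^n(X_i)=H^n(X)$, so all cohomology groups stabilise; by Lemma~\ref{le:cauchy-generator}, applied to the generator $\La$, the sequence is Cauchy, and the vanishing $H^n(X)=0$ for $|n|\gg 0$ supplies the boundedness condition. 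Hence $X$ is homologically perfect.

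The point where coherence is indispensable, and the step I expect to require the most care, is the finite presentation of $H^n(X)$ in the forward direction: it rests on $\mod\La$ being an abelian subcategory of $\Mod\La$ closed under the kernels and cokernels used to compute the cohomology of a perfect complex, which fails without the coherence hypothesis. A secondary point to check is the identification of $\bfD^b(\mod\La)$ with the complexes of bounded finitely presented cohomology, equivalently with the essential image of $\bfK^{-,b}(\proj\La)$ in $\bfD(\La)$, together with the fact that for the brutal truncations the homotopy colimit agrees with the ordinary colimit.
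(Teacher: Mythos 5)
Your proof is correct and follows essentially the same route as the paper's: both directions hinge on the brutal truncations $\s_{\ge -i}X$ as the Cauchy sequence (checked against the compact generator $\La$) for the converse, and on the finite presentation of the cohomology of perfect complexes over a right coherent ring, combined with the stabilisation condition (HP2), for the forward implication. Your write-up merely makes explicit some steps (the appeal to Lemma~\ref{le:hocolim}, the identification of homotopy colimit with colimit for the truncations) that the paper leaves implicit.
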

\begin{proof}
  Let $X$ be a complex in $\bfK^{-,b}(\proj\La)=\bfD^b(\mod\La)$ and
  write $X$ as homotopy colimit of its truncations $X_i=\s_{\ge -i}X$
  which lie in $\bfK^{b}(\proj\La)$. It is clear that $X$ is
  homologically perfect. In fact, $\bfD^\per(\La)$ is generated by
  $\La$; so it suffices to check the functor $H^n=\Hom(\La,\Si^{n}-)$
  for every $n\in\bbZ$. We have $H^n(X_i)\xto{_\sim}H^n(X_{i+1})$ for
  $i\gg 0$ and $H^nX=0$ for  $|n|\gg 0$.  On the other hand, if $X$
  is homologically perfect, then $H^nX$ is finitely presented for all
  $n$, so $X$ lies in $\bfD^b(\mod\La)$.
\end{proof}

\section{The bounded derived category}

Let $\La$ be a ring. We consider the category $\mod\La$ of finitely
presented $\La$-modules and its bounded derived category
$\bfD^b(\mod\La)$.  Our aim is to identify $\bfD^b(\mod\La)$ with a
completion of $\bfD^\per(\La)$ when $\La$ is right coherent; compare
this with Rouquier's \cite[Corollary~6.4]{Ro2008}.

\begin{lem}\label{le:bounded}
  Let $\La$ be a ring and set $\P=\proj\La$.  Then the functor
  \[\bfK^{-,b}(\P)\lto \Add(\bfK^b(\P)^\op,\Ab),\quad
    X\mapsto h_X:=\Hom(-,X)|_{\bfK^b(\P)},\] is fully faithful.
\end{lem}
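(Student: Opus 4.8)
The plan is to realise the displayed functor as a restricted Yoneda functor inside $\T:=\bfD(\La)$ and to reduce full faithfulness to the vanishing of phantom maps. Since $\bfD(\La)$ is compactly generated with $\T^c=\bfD^\per(\La)=\bfK^b(\P)$, and since a bounded above complex of finitely generated projectives is homotopy projective, the canonical functor $\bfK^{-,b}(\P)\to\T$ is fully faithful; hence all $\Hom$-groups may be computed in $\T$, and $h_X$ agrees with the restriction $\Hom(-,X)|_\C$ for $\C:=\bfK^b(\P)$. Given $X\in\bfK^{-,b}(\P)$, I would write $X=\hocolim_i X_i$ as the homotopy colimit of its brutal truncations $X_i:=\s_{\ge -i}X$, which lie in $\bfK^b(\P)=\C$ because $X$ is bounded above. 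Lemma~\ref{le:yoneda} then supplies, for every $Y\in\T$, a natural isomorphism $\Hom(X,Y)/\Ph(X,Y)\iso\Hom(h_X,h_Y)$, so the lemma amounts to proving $\Ph(X,Y)=0$ for all $X,Y\in\bfK^{-,b}(\P)$.

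For this I would invoke Lemma~\ref{le:phantom}, which gives $\Ph(X,Y)\cong\sideset{}{^{1}}\lim_i\Hom(X_i,\Si^{-1}Y)$. It thus suffices to show that the inverse system $\big(\Hom(X_i,\Si^{-1}Y)\big)_i$ has vanishing $\lim^1$; in fact I expect its transition maps to become isomorphisms for $i\gg 0$. These maps are induced by the inclusions $X_i=\s_{\ge -i}X\to\s_{\ge -i-1}X=X_{i+1}$, whose cone is the finitely generated projective $X^{-i-1}$ placed in degree $-i-1$, that is $X^{-i-1}[i+1]$, sitting in a triangle $X_i\to X_{i+1}\to X^{-i-1}[i+1]\to\Si X_i$.

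The decisive point is to evaluate morphisms out of this cone. As $X^{-i-1}$ is finitely generated projective, $\Hom_\La(X^{-i-1},-)$ is exact, whence for every integer $m$
\[
\Hom_\T\big(X^{-i-1}[i+1],\,\Si^m Y\big)\cong\Hom_\La\big(X^{-i-1},\,H^{m-i-1}(Y)\big).
\]
Since $Y$ has cohomology bounded below, the right-hand side vanishes once $m-i-1$ is small enough, hence for all $i\gg 0$ (the two relevant values $m\in\{-1,0\}$ send the degree to $-\infty$). Applying $\Hom(-,\Si^{-1}Y)$ to the triangle above and using that the two flanking terms $\Hom(X^{-i-1}[i+1],\Si^{-1}Y)$ and $\Hom(X^{-i-1}[i+1],Y)$ vanish for $i\gg 0$, I conclude that $\Hom(X_{i+1},\Si^{-1}Y)\to\Hom(X_i,\Si^{-1}Y)$ is an isomorphism for $i\gg 0$. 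The system is therefore eventually constant, so its $\lim^1$ vanishes and $\Ph(X,Y)=0$.

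The main obstacle is exactly this $\lim^1$-vanishing: everything hinges on identifying the cone of two consecutive truncations and computing its cohomological $\Hom$ into $Y$, after which boundedness of $H^\ast(Y)$ forces the Mittag-Leffler property. The remaining ingredients---homotopy projectivity of bounded above projective complexes and the identification of $h_X$ with the restricted Yoneda functor of Lemma~\ref{le:yoneda}---are routine.
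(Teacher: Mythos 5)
Your proof is correct and follows essentially the same route as the paper: embed $\bfK^{-,b}(\P)$ into $\bfD(\La)$, write $X$ as the homotopy colimit of its brutal truncations $\s_{\ge -i}X$, identify the cone of consecutive truncations as the projective $X^{-i-1}$ placed in degree $-i-1$, deduce that the inverse system $\Hom(X_i,\Si^{-1}Y)$ is eventually constant from the boundedness of $H^\ast(Y)$, and conclude $\Ph(X,Y)=0$ via Lemma~\ref{le:phantom} and full faithfulness via Lemma~\ref{le:yoneda}. Your write-up is in fact slightly more explicit than the paper's about which shifts of the cone must be killed and about the reduction $\Ph(X,Y)\cong\sideset{}{^{1}}\lim_i\Hom(X_i,\Si^{-1}Y)$, but the argument is the same.
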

\begin{proof}
  We view $\bfK^{-,b}(\P)$ as a subcategory of $\bfD(\La)$.  Let $X,Y$
  be objects in $\bfK^{-,b}(\P)$ and write $X$ as homotopy colimit of
  its truncations $X_i=\s_{\ge -i}X$ which lie in $\bfK^{b}(\P)$.  Let
  $C_i$ denote the cone of $X_{i}\to X_{i+1}$. This complex is
  concentrated in degree $-i-1$; so $\Hom(C_i,Y)=0$ for $i\gg 0$. Thus
  $X_{i}\to X_{i+1}$ induces a bijection
\[\Hom(X_{i+1},Y)\xto{_\sim}\Hom(X_{i},Y) \quad\textrm{for}\quad i\gg
  0.\] This implies
 \[\Hom(X,Y)\xto{_\sim}\lim_i\Hom(X_i,Y)\]
 and therefore $\Ph(X,Y)=0$ by Lemma~\ref{le:phantom}.  From
 Lemma~\ref{le:yoneda} we conclude that
  \[\Hom(X,Y)\xto{_\sim}\Hom(h_X,h_Y).\qedhere\]
\end{proof}

Let $\C$ be a triangulated category and fix a cohomological functor
$H\colon\C\to\A$.  Recall that an object $X$ in $\widehat\C$ is
\emph{bounded} if $\colim_iH^n(X_i)=0$ for $|n|\gg 0$, and
$\widehat\C^b$ denotes the full subcategory of bounded objects in
$\widehat\C$. From Theorem~\ref{th:tria-completion} and
Remark~\ref{re:bounded} we know that $\widehat\C^b$ admits a canonical
triangulated structure when $\C$ is algebraic and bounded Cauchy
sequences are phantomless.

\begin{thm}\label{th:bounded-derived}
For a right coherent ring $\La$ there is a canonical triangle equivalence
\[\widehat{\bfD^\per(\La)}^b\xto{\ _\sim\ }\bfD^b(\mod\La)\]
which sends a Cauchy sequence in $\bfD^\per(\La)$ to its colimit.
\end{thm}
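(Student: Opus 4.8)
The plan is to work inside the ambient compactly generated algebraic triangulated category $\T=\bfD(\La)$, whose subcategory of compact objects is $\C=\bfD^\per(\La)$, and to realise the bounded completion as the full subcategory of homotopy colimits of bounded Cauchy sequences. With $\P=\proj\La$, set
\[\D^b:=\{\hocolim_i X_i\in\T\mid X\in\Cau(\bbN,\C)\text{ bounded}\},\]
boundedness taken with respect to $H=H^0$. I would first identify $\D^b$, as a full subcategory of $\T$, with $\bfD^b(\mod\La)=\bfK^{-,b}(\P)$. For one inclusion, a homotopy colimit of a bounded Cauchy sequence in $\C$ is, via Lemma~\ref{le:intrinsic} (with $G=\La$, so $H=\Hom(\La,-)=H^0$), precisely a homologically perfect object, and hence lies in $\bfD^b(\mod\La)$ by Lemma~\ref{le:coherent-perf-cohom}. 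For the reverse inclusion, every $Z\in\bfK^{-,b}(\P)$ is the homotopy colimit of its truncations $Z_i=\s_{\ge -i}Z\in\bfK^b(\P)=\C$, and these form a bounded Cauchy sequence, as in the proof of Lemma~\ref{le:coherent-perf-cohom}. Thus $\D^b=\bfD^b(\mod\La)$ on the nose.

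The decisive step is to verify that the class of bounded Cauchy sequences in $\C$ is phantomless, and here Lemma~\ref{le:bounded} does the work. Its proof shows that for $U,V\in\bfK^{-,b}(\P)$ the canonical map $\Hom(U,V)\iso\lim_i\Hom(\s_{\ge -i}U,V)$ is an isomorphism, whence $\Ph(U,V)=0$ by Lemma~\ref{le:phantom}; the point is that along the truncations the inverse system $\Hom(\s_{\ge -i}U,V)$ is eventually constant and hence Mittag-Leffler. Applying this to $U$ and $\Si V$ gives $\Ph(U,\Si V)=0$ for all $U,V\in\D^b$. Since $\Ph(\hocolim_i X_i,\Si\,\hocolim_j Y_j)\cong\sideset{}{^{1}}\lim_i\colim_j\Hom(X_i,Y_j)$ by Lemma~\ref{le:phantom} together with Lemma~\ref{le:hocolim}, this is exactly the vanishing required in the definition of phantomless; equivalently, one invokes the implication (2)$\Rightarrow$(1) of Lemma~\ref{le:phantomless} applied to $\D=\D^b$. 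The phantom group is intrinsic to $\T$, so it may legitimately be computed from the canonical truncation presentation even though the phantomless condition quantifies over all bounded Cauchy representatives.

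With phantomlessness established I would invoke Corollary~\ref{co:tria-completion} in the bounded form of Remark~\ref{re:bounded}, taking $\X$ to be the class of bounded Cauchy sequences: it is stable under suspensions, and under the cones of coherent morphisms of triangles, since each $H^n$ is detected by a filtered colimit sitting in a long exact sequence. This yields a triangle equivalence $\widehat{\C}^b\iso\D^b$ sending a bounded Cauchy sequence to its homotopy colimit, which by the final assertion of Lemma~\ref{le:phantomless} is a genuine colimit in $\D^b$; composing with $\D^b=\bfD^b(\mod\La)$ gives the asserted canonical equivalence, sending a Cauchy sequence to its colimit. The step I expect to require the most care is the matching of triangulated structures: one must check that the exact triangles produced by Corollary~\ref{co:tria-completion} as colimits of coherent morphisms of triangles in $\C$ coincide with the triangles of $\bfD^b(\mod\La)$ inherited from $\bfD(\La)$. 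This follows because $\D^b$ is an honest triangulated subcategory of $\T$, so its exact triangles are exactly the triangles of $\bfD(\La)$ with all vertices in $\D^b$; since $\D^b$ and $\bfD^b(\mod\La)$ are literally the same full subcategory of $\bfD(\La)$, the transported and inherited structures agree, and the equivalence is one of triangulated categories.
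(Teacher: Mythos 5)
Your proposal is correct and takes essentially the same route as the paper's proof: phantomlessness of bounded Cauchy sequences extracted from the truncation argument of Lemma~\ref{le:bounded} (via Lemma~\ref{le:phantom}), identification of the homotopy colimits with $\bfD^b(\mod\La)$ via Lemma~\ref{le:coherent-perf-cohom}, and the triangulated structure via Lemma~\ref{le:phantomless} together with Corollary~\ref{co:tria-completion} and Remark~\ref{re:bounded}. The only difference is presentational: you work directly with the subcategory of homotopy colimits inside $\bfD(\La)$, whereas the paper compares the essential images of two fully faithful functors into $\Add(\bfD^\per(\La)^\op,\Ab)$ — but these two viewpoints are interchanged precisely by Lemma~\ref{le:phantomless}, so the arguments coincide.
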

\begin{proof}
  We consider the functor
  \[\bfD^b(\mod\La)\lto\Add(\bfD^\per(\La)^\op,\Ab),\quad
    X\mapsto\Hom(-,X)|_{\bfD^\per(\La)},\] which is fully faithful by
  Lemma~\ref{le:bounded}. On the other hand, we have the functor
  \[\widehat{\bfD^\per(\La)}^b\lto\Add(\bfD^\per(\La)^\op,\Ab), \quad
    X\mapsto\widetilde X,\] which is fully faithful by
  Proposition~\ref{pr:completion}.  Both functors have the same
  essential image by Lemma~\ref{le:coherent-perf-cohom}, because we
  can identify this with a full subcategory of $\bfD(\La)$ by
  Lemma~\ref{le:phantomless}. This yields a triangle equivalence,
  since the triangulated structures of both categories identify with
  the one from $\bfD(\La)$; see Corollary~\ref{co:tria-completion}
  plus Remark~\ref{re:bounded}.
\end{proof}

\begin{rem}
The triangulated category $\bfD^\per(\La)$ admits a morphic enhancement
which is given by $\bfD^\per(\La_1)$, with $\La_1$ the ring of upper
triangular $2\times2$ matrices over $\La$. This enhancement can be
completed and yields a morphic enhancement of
$\widehat{\bfD^\per(\La)}^b$ that identifies with the morphic
enhancement of $\bfD^b(\mod\La)$; see
Section~\ref{ss:completion-morphic-enrich}. This observation enriches
the triangle equivalence of Theorem~\ref{th:bounded-derived}.
\end{rem}

For a noetherian algebra over a complete local ring, there is another
description of $\bfD^b(\mod\La)$ which is obtained by completing the
category of finite length modules over $\La^\op$. 

\begin{prop}
  Let $\La$ be a noetherian algebra over a complete local ring and set
  $\Ga=\La^\op$.  Then
  there are triangle equivalences
  \[\bfD^b(\mod\La)^\op\xto{\ _\sim\ }\bfD^b(\widehat{\fl\Ga})
    \xto{\ _\sim\ }\widehat{\bfD^b(\fl\Ga)}^b.\]
\end{prop}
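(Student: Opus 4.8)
The plan is to obtain both equivalences by transporting the results of Section~\ref{se:abelian} through Matlis duality, applied to the opposite algebra. Write $R$ for the complete local base ring and $E$ for the injective envelope of its residue field, so that Matlis duality $D:=\Hom_R(-,E)$ is at our disposal.

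First I would settle the right-hand equivalence. Since $\Ga=\La^\op$ is again a noetherian algebra over $R$, Example~\ref{ex:noeth-alg} shows that $\fl\Ga$ is ind-artinian and that $\widehat{\fl\Ga}$ identifies with the category $\art\Ga$ of artinian $\Ga$-modules. I would then check that $\A:=\Mod\Ga$ satisfies the hypotheses of Example~\ref{ex:D(flA)} with $k=R$: there are only finitely many simple $\Ga$-modules; the $R$-module $\Hom(X,Y)$ has finite length for $X,Y\in\fl\Ga$ (it is finitely generated over $R$ and supported at the maximal ideal); and over a complete local ring the injective envelope of each simple is artinian and locally finite. Example~\ref{ex:D(flA)} then yields at once the triangle equivalences $\bfD^b(\widehat{\fl\Ga})\iso\bfD^b(\art\Ga)\iso\widehat{\bfD^b(\fl\Ga)}^b$, which supplies the second map of the proposition.

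For the left-hand equivalence I would invoke Matlis duality in the module-finite setting: the contravariant functor $D=\Hom_R(-,E)$ restricts to a duality exchanging finitely generated $\La$-modules with artinian $\Ga$-modules,
\[\mod\La\xto{\ _\sim\ }(\art\Ga)^\op,\qquad M\mapsto\Hom_R(M,E),\]
with $D\circ D\cong\id$. Because $E$ is injective over $R$, the functor $D$ is exact (no flatness of $\La$ is needed), so it passes to an exact anti-equivalence of bounded derived categories. Combining $\bfD^b(\mod\La)^\op\cong\bfD^b((\mod\La)^\op)$ with the identification $\art\Ga=\widehat{\fl\Ga}$ from the previous step gives the first equivalence $\bfD^b(\mod\La)^\op\iso\bfD^b(\widehat{\fl\Ga})$, and composing the two completes the proof.

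The routine points — exactness of $D$ and its passage to derived categories — are classical, so the step demanding the most care is the bookkeeping of sides: one must track how $\Hom_R(M,E)$ acquires its $\Ga=\La^\op$-module structure and confirm that Matlis duality genuinely carries $\mod\La$ onto $\art\Ga$ rather than $\art\La$. This handedness is precisely what makes the opposite algebra $\Ga$ appear in the statement, and it is the one place where a careless identification would break the chain of equivalences.
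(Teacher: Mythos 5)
Your proposal is correct and follows essentially the same route as the paper's own proof: Matlis duality gives $(\mod\La)^\op\iso\art\Ga$ and hence the first equivalence after identifying $\art\Ga$ with $\widehat{\fl\Ga}$ via Example~\ref{ex:noeth-alg}, while the second equivalence is exactly the one provided by Example~\ref{ex:D(flA)}. The only difference is that you spell out the verification of the hypotheses of Example~\ref{ex:D(flA)} and the exactness and handedness of $\Hom_R(-,E)$, which the paper leaves implicit.
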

\begin{proof}
  Matlis duality gives an equivalence $(\mod\La)^\op\iso \art\Ga$, so
  $\bfD^b(\mod\La)^\op\iso \bfD^b(\art\Ga)$, and we have
  $\art\Ga\iso \widehat{\fl\Ga}$ by Example~\ref{ex:noeth-alg}.  This
  yields the first functor, and the second is from
  Example~\ref{ex:D(flA)}.
\end{proof}

\section{Pseudo-coherent objects}\label{se:pseudo-coherence}

Let $\T$ be a triangulated category and $H\colon\T\to\A$ a
cohomological functor into an abelian category.  Set
\[\T^{> n}:=\{X\in\T\mid H^iX=0\textrm{ for all }i \le n\}\]
and
\[\T^{\le n}:=\{X\in\T\mid H^iX=0\textrm{ for all }i > n\}.\]
We suppose  for all $X,Y\in\T$ and $n\in\bbZ$ the following:
\begin{enumerate}
  \item[(TS1)] There is an exact triangle
\[\t_{\le n}X\lto X\lto\t_{>n}X \lto \Si (\t_{\le n}X)\]
with $\t_{\le n}X\in \T^{\le n}$ and $\t_{> n}X\in \T^{> n}$.
\item[(TS2)] $\Hom(X,Y)=0$ for  $X\in \T^{\le n}$ and $Y\in \T^{>n}$.
\end{enumerate}
Thus the category $\T$ is equiped with a \emph{t-structure}
\cite{BBD1982}.

We will use the following observation.

\begin{lem}
 For any morphism $X\to Y$ in $\T$ we have
\[\t_{>n}X\xto{_\sim}\t_{>n}Y\quad\iff\quad H^iX\xto{_\sim}H^iY\textrm{
    for all } i>n.\]
\end{lem}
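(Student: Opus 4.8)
The plan is to reduce both implications to a single cohomology computation for the truncation functor, together with the observation that (TS2) makes $H$ conservative. First I would record the behaviour of $H$ on $\t_{>n}$: applying $H$ to the triangle of (TS1), namely $\t_{\le n}W\to W\to\t_{>n}W\to\Si(\t_{\le n}W)$, and using that $\t_{\le n}W\in\T^{\le n}$ gives $H^i(\t_{\le n}W)=0$ for $i>n$, so the long exact sequence yields natural isomorphisms
\[H^i(W)\iso H^i(\t_{>n}W)\quad\text{for } i>n,\]
while $H^i(\t_{>n}W)=0$ for $i\le n$ since $\t_{>n}W\in\T^{>n}$. Naturality in $W$ is guaranteed by the functoriality of the truncation and of the unit $W\to\t_{>n}W$ \cite{BBD1982}.

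For the forward implication I would assume $\t_{>n}f$ invertible and, for each $i>n$, contemplate the naturality square
\[\begin{tikzcd}
H^i(X)\arrow{r}{\sim}\arrow{d}{H^i(f)}&H^i(\t_{>n}X)\arrow{d}{H^i(\t_{>n}f)}\\
H^i(Y)\arrow{r}{\sim}&H^i(\t_{>n}Y)
\end{tikzcd}\]
whose horizontal maps are the isomorphisms just established; since the right vertical map is invertible, so is $H^i(f)$.

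For the converse I would start from the hypothesis that $H^i(f)$ is invertible for $i>n$ and set $g:=\t_{>n}f$. The computation above shows that $H^i(g)$ is invertible for every $i\in\bbZ$: for $i>n$ it is identified with $H^i(f)$, and for $i\le n$ both source and target vanish. Completing $g$ to an exact triangle $\t_{>n}X\xto{g}\t_{>n}Y\to Z\to\Si(\t_{>n}X)$ and passing to the long exact cohomology sequence, in which every $H^i(g)$ is invertible, forces $H^i(Z)=0$ for all $i$.

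The step I expect to be the real content is the passage from $H^\ast(Z)=0$ to $Z\cong 0$, and this is exactly where (TS2) is used: an object with vanishing cohomology lies in $\T^{\le n}\cap\T^{>n}$, so (TS2) applied with $X=Y=Z$ gives $\Hom(Z,Z)=0$ and hence $Z\cong0$. Thus $g=\t_{>n}f$ is an isomorphism. It is worth flagging that this conservativity of $H$ is essential rather than cosmetic: were it to fail, a nonzero $Z$ with $H^\ast(Z)=0$ would violate the statement via $f\colon0\to Z$, for which $H^i(f)$ is an isomorphism for all $i$ while $\t_{>n}f\colon0\to\t_{>n}Z=Z$ is not. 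Everything else is a routine manipulation of the triangle of (TS1) and its long exact sequence.
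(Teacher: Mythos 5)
Your proof is correct and follows essentially the same route as the paper: both reduce the statement, via the triangle of (TS1) and the resulting natural isomorphisms $H^i(W)\iso H^i(\t_{>n}W)$ for $i>n$, to the assertion that $\t_{>n}f$ is invertible if and only if $H^i(\t_{>n}f)$ is invertible for all $i\in\bbZ$. The paper's two-line proof leaves that conservativity step implicit, whereas you spell it out with the cone argument and (TS2); this is a matter of detail, not a difference in method.
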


\begin{proof}
Note that for any object $X$ the morphism
$X\to\t_{>n}X$ induces an isomorphism $H^iX\to H^i(\t_{>n}X)$ for all
$i>n$. Thus $ H^iX\xto{_\sim}H^iY$ for all $i>n$ if and only if
$H^i(\t_{>n}X)\xto{_\sim}H^i(\t_{>n}Y)$ for all $i\in\bbZ$.
\end{proof}

Now suppose that $\T$ is compactly generated and write $\T^c$ for the
full subcategory of compact objects.

\begin{defn}
  An object $X\in\T$ is called \emph{pseudo-coherent} (with respect to
  the chosen t-structure) if $X$ can be written as homotopy colimit of
  a sequence $X_0\to X_1\to X_2\to\cdots$ in $\T^c$ such that
  $\t_{>-i} X_i\xto{_\sim} \t_{> -i} X$ for all $i\ge 0$. We say that
  $X$ has \emph{bounded cohomology} if $H^nX=0$ for $|n|\gg 0$.
\end{defn}

\begin{lem}\label{le:pseudo-coh}
  The  functor
  \[\T\lto\Add((\T^c)^\op,\Ab),\quad X\mapsto h_X:=\Hom(-,X)|_{\T^c},\]
  is fully faithful when restricted to pseudo-coherent objects with
  bounded cohomology.
\end{lem}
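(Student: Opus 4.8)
The plan is to reduce full faithfulness to the vanishing of phantom morphisms and then to annihilate the phantoms using the $t$-structure, in close analogy with the proof of Lemma~\ref{le:bounded}. Fix pseudo-coherent objects $X,Y$ with bounded cohomology and write $X=\hocolim_i X_i$ with $X_i\in\T^c$ and $\t_{>-i}X_i\iso\t_{>-i}X$ for all $i$. Since each $X_i$ is compact, Lemma~\ref{le:yoneda} supplies a natural isomorphism $\Hom(X,Y)/\Ph(X,Y)\iso\Hom(h_X,h_Y)$, so it suffices to prove that $\Ph(X,Y)=0$. By Lemma~\ref{le:phantom} we have $\Ph(X,Y)\cong\sideset{}{^{1}}\lim_i\Hom(X_i,\Si^{-1}Y)$, and I will show this derived limit vanishes by proving that the transition maps $\Hom(X_{i+1},\Si^{-1}Y)\to\Hom(X_i,\Si^{-1}Y)$ are bijective for $i\gg 0$; an eventually constant tower has vanishing $\lim^1$.

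The key step is to locate the cone. Let $C_i$ be the cone of $X_i\to X_{i+1}$. The defining property of pseudo-coherence, read through the preceding lemma characterising $\t_{>n}$-isomorphisms cohomologically, gives $H^nX_i\iso H^nX$ for $n>-i$ and $H^nX_{i+1}\iso H^nX$ for $n>-i-1$. Because the structure map $X_i\to X$ factors as $X_i\to X_{i+1}\to X$, these isomorphisms are compatible, and the composite $H^nX_i\to H^nX_{i+1}\to H^nX$ is an isomorphism; hence $H^nX_i\to H^nX_{i+1}$ is an isomorphism for every $n>-i$. Feeding this into the long exact cohomology sequence of the triangle $X_i\to X_{i+1}\to C_i\to\Si X_i$ shows $H^nC_i=0$ for all $n>-i$, i.e.\ $C_i\in\T^{\le -i}$.

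Finally I invoke boundedness and orthogonality. Since $Y$ has bounded cohomology, so does each shift $\Si^kY$, so for fixed $k$ there is an index beyond which $\Si^kY\in\T^{>-i}$. Applying (TS2) with $n=-i$ to $C_i\in\T^{\le -i}$ and $\Si^kY\in\T^{>-i}$ gives $\Hom(C_i,\Si^kY)=0$ for $i\gg 0$. Now apply $\Hom(-,\Si^{-1}Y)$ to the triangle $X_i\to X_{i+1}\to C_i\to\Si X_i$: the transition map $\Hom(X_{i+1},\Si^{-1}Y)\to\Hom(X_i,\Si^{-1}Y)$ is flanked by $\Hom(C_i,\Si^{-1}Y)$ and $\Hom(\Si^{-1}C_i,\Si^{-1}Y)=\Hom(C_i,Y)$, both of which vanish for $i\gg 0$ by the previous sentence (taking $k\in\{0,-1\}$). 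Thus the transition map is bijective for large $i$, whence $\sideset{}{^{1}}\lim_i\Hom(X_i,\Si^{-1}Y)=0$ and $\Ph(X,Y)=0$.

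The main obstacle is the cone computation $C_i\in\T^{\le -i}$: one must track carefully that the cohomology isomorphisms furnished by pseudo-coherence are compatible with the structure maps $X_i\to X_{i+1}$, so that the connecting maps in the long exact sequence vanish in the stated range. Once this is secured, the orthogonality axiom (TS2) together with the boundedness of $Y$ finishes the argument essentially formally.
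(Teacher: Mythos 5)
Your proof is correct and follows essentially the same route as the paper: identify the cone $C_i$ of $X_i\to X_{i+1}$ as lying in $\T^{\le -i}$, deduce that the towers $\Hom(X_i,\Si^kY)$ are eventually constant using (TS2) and the bounded cohomology of $Y$, kill the phantoms via Lemma~\ref{le:phantom}, and conclude with Lemma~\ref{le:yoneda}. The only (immaterial) difference is that the paper gets $C_i\in\T^{\le -i}$ from the truncation identity $\t_{>-i}\t_{>-(i+1)}=\t_{>-i}$, whereas you derive it from the cohomological characterisation of $\t_{>n}$-isomorphisms together with the long exact sequence.
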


\begin{proof}
  Let $X,Y$ be objects in $\T$. Suppose that $X= \hocolim_iX_i$ is
  pseudo-coherent and $H^nY=0$ for $n\ll 0$.  Let $C_i$ denote the
  cone of $X_{i}\to X_{i+1}$.  The induced morphism
  $\t_{> -i}X_i\to\t_{>-i}X_{i+1}$ is an isomorphism since
  $\t_{>-i}\t_{>-(i+1)}=\t_{>-i}$. Thus $C_i\in\T^{\le -i}$ and
  therefore $\Hom(X_{i+1},Y)\xto{_\sim}\Hom(X_{i},Y)$ for $i\gg 0$. It
  follows from  Lemma~\ref{le:phantom} that $\Ph(X,Y)=0$, so 
  \[\Hom(X,Y)\xto{_\sim}\Hom(h_X,h_Y)\] by Lemma~\ref{le:yoneda}.
\end{proof}

\begin{exm}
  Let $\La$ be a ring and  $\T=\bfD(\La)$ the derived category
  of the category of all $\La$-modules with the standard
  t-structure. Then the canonical functor
  $\bfK^-(\proj\La)\to\bfD(\La)$ identifies $\bfK^-(\proj\La)$ with
  the full subcategory of pseudo-coherent objects in $\bfD(\La)$.
\end{exm}
\begin{proof}
  For $X\in\bfK^-(\proj\La)$ and $i\ge 0$ set $X_i:=\s_{\ge -i}X$. Then
  we have $X=\hocolim_i X_i=X$ and
  $\t_{>-i} X_i\xto{_\sim} \t_{> -i} X$ for all $i\ge 0$. Thus $X$ is
  pseudo-coherent. The other implication is left to the reader.
\end{proof}

The example shows that for a right coherent ring $\La$ and any object
$X$ in $\T=\bfD(\La)$ the following are equivalent:
\begin{enumerate}
\item[(PC)] $X$ is pseudo-coherent and has bounded cohomology.
\item[(HP)] $X$ is homologically perfect.
\end{enumerate}
This seems to be a common phenomenon (cf.\
Propositions~\ref{pr:coherent-cohom} and \ref{prop:ringspectra})
though we do not have a general proof.

Let $\C$ be a triangulated category and fix a cohomological functor
$H\colon\C\to\A$.  Call a sequence $(X_i)_{i\ge 0}$ in $\C$
\emph{strongly bounded} if $\colim_iH^n(X_i)=0$ for $|n|\gg 0$, and if
for every $n\in\bbZ$ we have $H^n(X_i)\iso H^n(X_{i+1})$ for $i\gg
0$. By abuse of notation, we write $\widehat\C^b$ for the full
subcategory of strongly bounded objects in $\widehat\C$.\footnote{The
  condition $H^n(X_i)\iso H^n(X_{i+1})$ for $i\gg 0$ is automatic for
  a Cauchy sequence $X$ when $H=\Hom(C,-)$ for an object
  $C\in\C$.}

\begin{lem}\label{re:pseudo-coherent}
  Suppose that \emph{(PC)} $\Leftrightarrow$ \emph{(HP)} for all $X\in\T$, and set
  $\C:=\T^c$. Then the functor
  \[F\colon\widehat{\C}^b\lto\T,\quad X\mapsto\hocolim_i X_i,\] is
  fully faithful functor and identifies $\widehat{\C}^b$ with the full
  subcategory of pseudo-coherent objects having bounded
  cohomology. When $\T$ admits a morphic enhancement, then $F$
  is a triangle functor.
\end{lem}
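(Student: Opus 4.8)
The plan is to realize $F$ as the restriction to strongly bounded sequences of the equivalence already supplied by Lemma~\ref{le:phantomless}, and then to pin down its essential image using the hypothesis (PC) $\Leftrightarrow$ (HP). Let $\X$ denote the class of strongly bounded Cauchy sequences in $\C=\T^c$, so that $\widehat\C^b=\widehat{\C_\X}$. First I would observe that, for $X=(X_i)\in\X$, the object $\hocolim_i X_i$ is homologically perfect: condition (HP1) is the Cauchy property, (HP2) is built into the definition of strong boundedness, and (HP3) is equivalent to the remaining clause $\colim_i H^n(X_i)=0$ for $|n|\gg 0$, since once the maps $H^n(X_i)\to H^n(X_{i+1})$ are invertible the vanishing of the colimit forces $H^n(X_i)=0$ for $i\gg 0$. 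Thus $F$ carries $\widehat\C^b$ into the homologically perfect objects, which by hypothesis are exactly the pseudo-coherent objects with bounded cohomology. Conversely, any such object is homologically perfect, hence a homotopy colimit of a strongly bounded Cauchy sequence, so it lies in the essential image of $F$. A short check shows $\X$ is stable under suspension, using $H^n\circ\Si=H^{n+1}$.

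For full faithfulness I would invoke Lemma~\ref{le:pseudo-coh}: since each $F(X)$ is pseudo-coherent with bounded cohomology, the restricted Yoneda functor $h\colon X\mapsto\Hom(-,X)|_{\C}$ is fully faithful on these objects, and in particular $\Ph(F(X),F(Y))=0$ for all $X,Y\in\X$. By the equivalence of conditions (1) and (2) in Lemma~\ref{le:phantomless} this says precisely that $\X$ is phantomless, and clause (4) of that lemma then identifies $F$ as an equivalence $\widehat\C^b=\widehat{\C_\X}\iso\D:=\{\hocolim_i X_i\mid X\in\X\}$. Together with the previous paragraph, $\D$ is the subcategory of pseudo-coherent objects with bounded cohomology, so $F$ is fully faithful with the asserted image. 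Equivalently, one factors $F$ through $h$ via \eqref{eq:colim}, where $h\circ F$ is the fully faithful functor of Proposition~\ref{pr:completion} and $h$ is fully faithful on the image of $F$, whence $F$ is fully faithful by a two-out-of-three argument.

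It remains to treat the triangulated structure. Assuming a morphic enhancement of $\T$, I would apply the relative form of Corollary~\ref{co:tria-completion} described in Remark~\ref{re:bounded}, for which I must verify that $\X$ is closed under cones, in addition to the suspension-stability and phantomlessness already established. Given a coherent morphism of exact triangles $X_i\to Y_i\to Z_i\to\Si X_i$ with $(X_i),(Y_i)\in\X$, the sequence $(Z_i)$ is Cauchy by a five-lemma argument applied to $\Hom(C,-)$ for compact $C$, using that $(X_i),(Y_i)$ and their suspensions are Cauchy; and $(Z_i)$ is strongly bounded by the same argument applied to the long exact $H^n$-sequences of the triangles, together with exactness of filtered colimits in $\A$. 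With $\X$ thus phantomless and closed under suspensions and cones, the morphic enhancement version of Corollary~\ref{co:tria-completion} endows $\widehat\C^b$ with its triangulated structure and makes the equivalence onto the triangulated subcategory $\D\subseteq\T$ a triangle equivalence; hence $F$ is a triangle functor.

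The main obstacle is this last step. The elementary parts (the translation between strong boundedness and (HP2)+(HP3), and the reduction of full faithfulness to the phantomless condition) are routine once the right lemmas are cited. The genuine work lies in establishing cone-closure of $\X$ cleanly, where the five-lemma bookkeeping over the index $i$ must be carried out uniformly in $n$, and then in correctly matching the triangulated structure produced from coherent morphisms of standard triangles in the morphic enhancement with the exact triangles inherited from $\T$; this is precisely where the foundational machinery of the appendix does the real work, and I would lean on it rather than reprove it here.
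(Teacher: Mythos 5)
Your proposal is correct and follows essentially the same route as the paper: the paper's own proof simply cites Lemma~\ref{le:phantomless} together with Lemma~\ref{le:pseudo-coh} for the first assertion, and Lemma~\ref{le:morphic-compact-completion} for the triangle-functor assertion, which are exactly the ingredients you invoke. Your extra steps --- translating strong boundedness into (HP1)--(HP3), deducing phantomlessness of the class $\X$ from Lemma~\ref{le:pseudo-coh}, and checking suspension- and cone-closure so that the appendix machinery applies --- merely make explicit the verifications the paper leaves to the reader.
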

\begin{proof}
  The first assertion follows from Lemmas~\ref{le:phantomless} and
  \ref{le:pseudo-coh}. For the second assertion, see
  Lemma~\ref{le:morphic-compact-completion}.
\end{proof}

\section{Noetherian schemes}

We fix a noetherian scheme $\bbX$. Let $\Qcoh\bbX$ denote the category
of quasi-coherent sheaves on $\bbX$, and $\coh\bbX$ denotes the full
subcategory of coherent sheaves. We consider the derived categories
\[\bfD^\per(\bbX)\hookrightarrow 
  \bfD^b(\coh \bbX)\hookrightarrow \bfD(\Qcoh\bbX).\] The triangulated
category $\bfD(\Qcoh\bbX)$ is compactly generated and the full
subcategory of compact objects agrees with the category
$\bfD^\per(\bbX)$ of perfect complexes \cite{Ne1996}.  We use the
standard t-structure and then the above notion of a pseudo-coherent
object identifies with the usual one; see \cite[\S2.3]{Il1966},
\cite[\S2.2]{TT1990}, and
\cite[\href{http://stacks.math.columbia.edu/tag/0DJM}{\S0DJM}]{stacks-project}.
A precise reference is
\cite[\href{http://stacks.math.columbia.edu/tag/0DJN}{Lemma~0DJN}]{stacks-project},
which uses approximations and builds on work of Lipman and Neeman
\cite{LN2007}.

We obtain the following description of the objects in
$\bfD^b(\coh\bbX)$. For a refinement, see Theorem~\ref{thm:intrinsic}.
We use the notion of a homologically perfect object with respect to
the functor that takes degree zero cohomology of a complex.

\begin{prop}\label{pr:coherent-cohom}
  For an object $X$ in $\bfD(\Qcoh\bbX)$ the following are equivalent:
  \begin{enumerate}
  \item  $X$ belongs to $\bfD^b(\coh\bbX)$.
  \item $X$ is pseudo-coherent and has bounded cohomology.
  \item $X$ is homologically perfect.
\end{enumerate}
\end{prop}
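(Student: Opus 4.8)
The plan is to set $\T:=\bfD(\Qcoh\bbX)$ with its standard $t$-structure, let $H^n$ denote the degree-$n$ cohomology sheaf, and run the cycle $(1)\Rightarrow(2)\Rightarrow(3)\Rightarrow(1)$. Two of the three implications are essentially formal. For $(1)\Rightarrow(2)$ I would invoke the identifications recorded just before the proposition: on the noetherian scheme $\bbX$ the present notion of pseudo-coherence coincides with the classical one, and a classically pseudo-coherent complex is precisely one whose cohomology sheaves are coherent and vanish in all sufficiently large degrees. An object of $\bfD^b(\coh\bbX)$ has coherent, bounded cohomology, so it is pseudo-coherent and has bounded cohomology. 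For $(3)\Rightarrow(1)$, if $X=\hocolim_iX_i$ is homologically perfect then Lemma~\ref{le:hocolim} gives $H^n(X)\cong\colim_iH^n(X_i)$; condition (HP2) identifies this with $H^n(X_i)$ for $i\gg0$, which is coherent because $X_i$ is perfect, and (HP3) forces $H^n(X)=0$ for all but finitely many $n$. Hence $X$ has bounded coherent cohomology, i.e.\ $X\in\bfD^b(\coh\bbX)$; note that only (HP2) and (HP3) are used here.

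The substantive implication is $(2)\Rightarrow(3)$. Pseudo-coherence already supplies a sequence $X=\hocolim_iX_i$ with $X_i\in\bfD^\per(\bbX)$ and $\t_{>-i}X_i\iso\t_{>-i}X$, which by the lemma relating $\t_{>n}$ to cohomology means $H^m(X_i)\cong H^m(X)$ for all $m>-i$. Comparing the composite $X_i\to X_{i+1}\to X$ by two-out-of-three shows that $X_i\to X_{i+1}$ induces an isomorphism on $\t_{>-i}$, so the cone $C_i$ of $X_i\to X_{i+1}$ lies in $\T^{\le-i}$. Conditions (HP2) and (HP3) are then immediate: for fixed $n$ and all $i>-n$ one has $H^n(X_i)\cong H^n(X)$, which is constant in $i$ and, by bounded cohomology, vanishes for all but finitely many $n$.

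The hard part is (HP1), the Cauchy condition $\Hom(C,X_i)\iso\Hom(C,X_{i+1})$ for $i\gg0$ and every compact $C$. Applying $\Hom(C,-)$ to the triangle $X_i\to X_{i+1}\to C_i\to\Si X_i$, I must show $\Hom(C,\Si^{-1}C_i)=0=\Hom(C,C_i)$ for $i\gg0$. The key input is that a compact object of $\T$ is a perfect complex, hence dualizable with dual $C^\vee$, so that $\Hom(C,C_i)\cong H^0(\bbX,C^\vee\Lotimes C_i)$, and that, $\bbX$ being noetherian and therefore quasi-compact quasi-separated, quasi-coherent cohomology on $\bbX$ has finite cohomological dimension $d$. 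As $C^\vee\Lotimes C_i$ is concentrated in degrees $\le-i+c_C$ for a constant $c_C$ depending only on $C$, its degree-$0$ and degree-$(-1)$ hypercohomology vanish once $i>c_C+d$, giving (HP1). The dependence of $c_C+d$ on $C$ is exactly what the Cauchy condition tolerates. I expect this finiteness input — compactness forcing dualizability, together with noetherianness forcing a finite cohomological dimension — to be the crux of the argument, the remaining manipulations being routine bookkeeping with truncations; establishing (HP1) then closes the cycle.
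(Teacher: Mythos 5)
Your proof is correct and follows essentially the same route as the paper: the same implications, the same truncation argument placing the cone $C_i$ in $\T^{\le -i}$, and your dualizability-plus-finite-cohomological-dimension argument for (HP1) is precisely the proof of the Stacks Project lemma (Tag 09M2) that the paper simply cites at that step. One cosmetic remark: Lemma~\ref{le:hocolim} concerns $\Hom(C,-)$ for compact $C$, so it does not literally give $H^n(X)\cong\colim_i H^n(X_i)$ for cohomology sheaves on a non-affine scheme; the isomorphism is nevertheless standard (restrict to affine opens, where $H^n=\Hom(\cO,\Si^n(-))$ with $\cO$ compact, or use that cohomology sheaves commute with coproducts), and the paper itself asserts it without comment.
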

\begin{proof}
  (1) $\Leftrightarrow$ (2): See \cite[Example~2.2.8]{TT1990}.

  (2) $\Rightarrow$ (3): Let $X=\hocolim_i X_i$ be pseudo-coherent and
  $C$ a perfect complex. The argument in the proof of
  Lemma~\ref{le:pseudo-coh} shows that $(X_i)_{i\ge 0}$ is a Cauchy
  sequence in $\bfD^\per(\bbX)$. More precisely, the cone of
  $X_{i}\to X_{i+1}$ belongs to $\T^{\le -i}$, and therefore
  $\Hom(C,X_i)\xto{_\sim}\Hom(C,X_{i+1})$ for $i\gg 0$; see
  \cite[\href{http://stacks.math.columbia.edu/tag/09M2}{\S09M2}]{stacks-project}.
  Also, $H^n(X_i)\iso H^n(X_{i+1})$ for all $i>-n$.  Finally, for
  almost all $n\in\bbZ$ we have $H^n(X_i)\cong H^n(X)=0$ for $i\gg 0$,
  since $X$ has bounded cohomology.

  (3) $\Rightarrow$ (1): Let $X=\hocolim_i X_i$ be homologically
  perfect and $n\in\bbZ$.  Then $H^n(X)\cong \colim_i H^n(X_i)$ equals
  the cohomology of some perfect complex, so $H^n(X)$ is coherent. Also,
  $H^n(X)=0$ for $|n|\gg 0$.
\end{proof}

The following is now the analogue of Theorem~\ref{th:bounded-derived}
for schemes that are not necessarily affine. The proof is very
similar; see also Lemma~\ref{re:pseudo-coherent} for the general
argument.

\begin{thm}\label{th:scheme-bounded-derived}
  For a  noetherian scheme $\bbX$ there is a canonical
  triangle equivalence
\[\widehat{\bfD^\per(\bbX)}^b\xto{\ _\sim\ }\bfD^b(\coh\bbX)\]
which sends a Cauchy sequence in $\bfD^\per(\bbX)$ to its colimit.\end{thm}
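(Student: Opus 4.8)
The plan is to run the proof of Theorem~\ref{th:bounded-derived} essentially unchanged, replacing each affine-specific ingredient by its scheme-theoretic counterpart from Section~\ref{se:pseudo-coherence}. Concretely, I would apply Lemma~\ref{re:pseudo-coherent} with $\T=\bfD(\Qcoh\bbX)$, $\C:=\T^c=\bfD^\per(\bbX)$, and $H$ the degree-zero cohomology functor attached to the standard t-structure. The entire argument then reduces to checking the hypothesis of that lemma and reading off its conclusion, since all the genuine content has already been isolated in Proposition~\ref{pr:coherent-cohom}.

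First I would verify that (PC) $\Leftrightarrow$ (HP) holds for every $X\in\T$. This is exactly the equivalence of conditions (2) and (3) in Proposition~\ref{pr:coherent-cohom}, so nothing new is required: the pseudo-coherent objects of bounded cohomology coincide with the homologically perfect ones. With this hypothesis in hand, Lemma~\ref{re:pseudo-coherent} supplies a fully faithful functor
\[F\colon\widehat{\C}^b\lto\T,\qquad X\mapsto\hocolim_i X_i,\]
whose essential image is the full subcategory of pseudo-coherent objects having bounded cohomology. The fully faithfulness rests on two inputs already available: phantoms between such objects vanish by Lemma~\ref{le:pseudo-coh}, and the completion is pinned down through the restricted Yoneda functor by Lemma~\ref{le:phantomless}. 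Invoking the equivalence of (1) and (2) in Proposition~\ref{pr:coherent-cohom}, this essential image is precisely $\bfD^b(\coh\bbX)$, so $F$ becomes an equivalence $\widehat{\bfD^\per(\bbX)}^b\iso\bfD^b(\coh\bbX)$.

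It remains to promote $F$ to a triangle functor. Since $\Qcoh\bbX$ is a Grothendieck category, $\bfD(\Qcoh\bbX)$ is algebraic and hence admits a morphic enhancement; the final clause of Lemma~\ref{re:pseudo-coherent} then guarantees that $F$ respects the triangulated structures, the exact triangles on the source being the colimits of Cauchy sequences of coherent morphisms of exact triangles in $\bfD^\per(\bbX)$, as in Corollary~\ref{co:tria-completion}. Finally, the last assertion of Lemma~\ref{le:phantomless} shows that the homotopy colimit computed by $F$ is genuinely the colimit of the Cauchy sequence inside the completion, which is exactly the stated description of the equivalence.

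The real difficulty of the theorem is thus not in this assembly but has been absorbed into Proposition~\ref{pr:coherent-cohom}: for a non-affine scheme one cannot test cohomology against a single compact generator as in Lemma~\ref{le:coherent-perf-cohom}, and the coincidence of pseudo-coherence with homological perfectness relies on the approximation machinery for $\bfD(\Qcoh\bbX)$ (Lipman--Neeman and the Stacks project). Within the present argument the only point demanding care is the bookkeeping of the decoration $b$: since $H$ is no longer of the form $\Hom(C,-)$, strong boundedness is a genuine condition, and one must confirm that the strongly bounded Cauchy sequences—those realised by the truncations $\s_{\ge -i}X$ of a pseudo-coherent complex—are exactly the ones whose homotopy colimits land in $\bfD^b(\coh\bbX)$, so that the two notions of boundedness match on the nose.
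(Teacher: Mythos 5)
Your proposal is correct and takes essentially the same route as the paper: the paper's proof compares the two fully faithful functors into $\Add(\bfD^\per(\bbX)^\op,\Ab)$ via Proposition~\ref{pr:coherent-cohom}, Lemma~\ref{le:pseudo-coh}, Lemma~\ref{le:phantomless}, and Corollary~\ref{co:tria-completion}, which is exactly the content you invoke, packaged through Lemma~\ref{re:pseudo-coherent} --- the packaging the paper itself recommends in the sentence preceding the theorem. Your closing worry about reconciling ``bounded'' with ``strongly bounded'' is already resolved by the paper's conventions and results: from Section~\ref{se:pseudo-coherence} onwards $\widehat{\C}^b$ denotes the strongly bounded completion, and Proposition~\ref{pr:coherent-cohom} supplies precisely the identification of homotopy colimits of such sequences with $\bfD^b(\coh\bbX)$.
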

\begin{proof}
 We consider the functor
  \[\bfD^b(\coh\bbX)\lto\Add(\bfD^\per(\bbX)^\op,\Ab),\quad
    X\mapsto\Hom(-,X)|_{\bfD^\per(\bbX)},\] which is fully faithful by
  Lemma~\ref{le:pseudo-coh} and Proposition~\ref{pr:coherent-cohom}.
  On the other hand, we have the functor
  \[\widehat{\bfD^\per(\bbX)}^b\lto\Add(\bfD^\per(\bbX)^\op,\Ab), \quad
    X\mapsto\widetilde X,\] which is fully faithful by
  Proposition~\ref{pr:completion}. Both functors have the same
  essential image by Proposition~\ref{pr:coherent-cohom}, because we
  can identify this with a full subcategory of $\bfD(\Qcoh\bbX)$ by
  Lemma~\ref{le:phantomless}. This yields a triangle equivalence,
  since the triangulated structures of both categories identify with
  the one from $\bfD(\Qcoh\bbX)$; see
  Corollary~\ref{co:tria-completion} plus Remark~\ref{re:bounded}.
\end{proof}

An immediate consequence is the following.

\begin{cor}
  \pushQED{\qed}
  The singularity category of $\bbX$
(in the sense of Buchweitz and Orlov \cite{Bu1987,Or2004}) identifies
with the Verdier quotient
\[\frac{\widehat{\bfD^\per(\bbX)}^b}{\bfD^\per(\bbX)}.\qedhere\]
\end{cor}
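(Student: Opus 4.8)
The plan is to deduce this immediately from Theorem~\ref{th:scheme-bounded-derived}. Recall that the singularity category of $\bbX$ is by definition the Verdier quotient $\bfD^b(\coh\bbX)/\bfD^\per(\bbX)$, following Buchweitz and Orlov. Hence it suffices to show that the triangle equivalence
\[\Phi\colon\widehat{\bfD^\per(\bbX)}^b\xto{\ _\sim\ }\bfD^b(\coh\bbX)\]
of Theorem~\ref{th:scheme-bounded-derived} carries the copy of $\bfD^\per(\bbX)$ sitting inside $\widehat{\bfD^\per(\bbX)}^b$ via constant sequences onto the standard copy of $\bfD^\per(\bbX)$ inside $\bfD^b(\coh\bbX)$; the induced equivalence of quotients then gives the claim.

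First I would recall that the canonical functor $\bfD^\per(\bbX)\to\widehat{\bfD^\per(\bbX)}^b$ sends a perfect complex $X$ to the constant Cauchy sequence $X\xto{\id}X\xto{\id}\cdots$. This functor is fully faithful by Proposition~\ref{pr:completion}, and it lands in the bounded part since a perfect complex has bounded cohomology, so $\bfD^\per(\bbX)$ is identified with a full triangulated subcategory of $\widehat{\bfD^\per(\bbX)}^b$. Next, because $\Phi$ sends a Cauchy sequence to its colimit, the constant sequence on $X$ is carried to $\colim_i X=X$, that is, to the perfect complex $X$ regarded as an object of $\bfD^b(\coh\bbX)$. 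Thus $\Phi$ restricts to a triangle equivalence between the two copies of $\bfD^\per(\bbX)$.

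Finally, I would invoke the formal fact that a triangle equivalence restricting to a triangle equivalence of full triangulated subcategories induces a triangle equivalence of the associated Verdier quotients. Applied to $\Phi$, this yields
\[\frac{\widehat{\bfD^\per(\bbX)}^b}{\bfD^\per(\bbX)}\xto{\ _\sim\ }\frac{\bfD^b(\coh\bbX)}{\bfD^\per(\bbX)},\]
and the right-hand side is the singularity category by definition. The only point requiring attention is the compatibility of the two embeddings of $\bfD^\per(\bbX)$, but this is transparent from the explicit description of $\Phi$ on colimits of constant sequences, so no genuine obstacle arises; the corollary is essentially a bookkeeping consequence of Theorem~\ref{th:scheme-bounded-derived}.
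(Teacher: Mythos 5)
Your argument is correct and is precisely the reasoning the paper leaves implicit: the corollary is stated as an immediate consequence of Theorem~\ref{th:scheme-bounded-derived}, since the equivalence sends constant sequences to their colimits and hence matches the two copies of $\bfD^\per(\bbX)$, so the Verdier quotients identify. Your write-up simply makes this bookkeeping explicit, including the standard fact that an equivalence restricting to an equivalence of full triangulated subcategories induces an equivalence of quotients.
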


\appendix

\section{Homologically perfect objects in homotopy theory \\[.5em]
  by Tobias Barthel}
  
Let $R$ be an associative ring spectrum and let $\D_R$ be the derived
category of right $R$-module spectra as constructed for example in
\cite{EKMM1997}; if no confusion is likely to arise, we will refer to
an object in $\D_R$ simply as an $R$-module. If $R$ is connective,
then $\D_R$ inherits the standard t-structure from the stable homotopy
category, and we denote by $\D_R^b\subseteq\D_R$ the full triangulated
subcategory of bounded $R$-module spectra, i.e., those $R$-modules $M$
with $\pi_iM$ finitely presented over $\pi_0R$ and $\pi_iM = 0$ for
$\lvert i \rvert \gg 0$. As usual, $\D_R^c \subseteq \D_R$ is the full
subcategory of compact $R$-modules or, equivalently, the thick
subcategory generated by the right $R$-module $R$.

Throughout this appendix, we will employ homological grading, so for
example the pseudo-coherence condition
$\tau_{>-i}M_i \xrightarrow{_\sim} \tau_{>-i}M$ introduced in
Section~\ref{se:pseudo-coherence} translates to
$\pi_jM_i \xrightarrow{_\sim} \pi_jM$ for all $j <i$. Furthermore, the
notion of homologically perfect object used here will always be with
respect to the homological functor $\pi_0$.

\begin{prop}\label{prop:ringspectra}
  Suppose $R$ is a connective associative ring spectrum with $\pi_0R$
  right coherent and $\pi_iR$ finitely presented over $\pi_0R$ for all
  $i \ge 0$. The following conditions on $M \in \D_R$ are equivalent:
	\begin{enumerate}
		\item $M$ is pseudo-coherent and has bounded homotopy.
		\item $M$ is homologically perfect.
		\item $M$ belongs to $\D_R^b$.
	\end{enumerate}
\end{prop}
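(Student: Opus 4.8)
The plan is to run the cycle of implications $(1)\Rightarrow(2)\Rightarrow(3)\Rightarrow(1)$, reducing everything to two inputs: the interchange of homotopy groups with homotopy colimits from Lemma~\ref{le:hocolim} (giving $\pi_n\hocolim_iM_i\cong\colim_i\pi_nM_i$), and the finiteness supplied by the coherence hypothesis on $\pi_0R$. I work in homological grading throughout, so that the pseudo-coherence condition $\tau_{>-i}M_i\xrightarrow{\sim}\tau_{>-i}M$ reads $\pi_jM_i\xrightarrow{\sim}\pi_jM$ for all $j<i$, as noted before the statement.

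First I would record the key finiteness fact: every compact $R$-module $N$ has $\pi_jN$ finitely presented over $\pi_0R$ for all $j$. Since $\pi_0R$ is right coherent, $\mod(\pi_0R)$ is abelian, so the class of $R$-modules with degreewise finitely presented homotopy is closed under suspensions, retracts, and cofibre sequences: the long exact sequence expresses $\pi_j$ of a cofibre as an extension of a kernel by a cokernel of maps between finitely presented modules, both finitely presented by coherence. By hypothesis $R$ itself lies in this class ($\pi_iR$ is finitely presented for $i\ge0$ and vanishes for $i<0$), so the class is a thick subcategory containing $R$, hence contains all of $\D_R^c$. Granting this, two of the implications are short. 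For $(2)\Rightarrow(3)$, write $M=\hocolim_iM_i$ with $M_i$ compact; by (HP2) the colimit $\pi_nM\cong\colim_i\pi_nM_i$ stabilises to some $\pi_nM_i$, which is finitely presented by the fact just proved, while (HP3) gives $\pi_nM=0$ for $|n|\gg0$, so $M\in\D_R^b$. For $(1)\Rightarrow(2)$, a pseudo-coherent presentation $M=\hocolim_iM_i$ with $\pi_jM_i\xrightarrow{\sim}\pi_jM$ for $j<i$ automatically satisfies (HP2), and with bounded homotopy also (HP3): for fixed $n$ and $i>n$ one has $\pi_nM_i\cong\pi_nM$, so the structure maps are eventually isomorphisms and vanish once $\pi_nM=0$. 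The Cauchy condition (HP1) then follows from (HP2) through Lemma~\ref{le:cauchy-generator}, since the $\pi_n$ are corepresented by the suspensions of the generator $R$ of $\D_R^c$.

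The main work is $(3)\Rightarrow(1)$: given $M\in\D_R^b$ I would construct a cell structure realising the pseudo-coherent tower. As $M$ is bounded I may assume $\pi_jM=0$ for $j<0$, and then build a CW $R$-module $W\simeq M$ from below by attaching cells $\Si^nR$: generators of $\pi_0M$ give finitely many $0$-cells $R^{a_0}\to M$, and at each stage one attaches cells of dimension $n$ to hit generators of $\pi_nM$ and cells of dimension $n{+}1$ to kill the resulting relations. This is exactly where coherence is essential: the relation module at each stage is the kernel of a map between finitely presented $\pi_0R$-modules, hence finitely presented and in particular finitely generated, so only finitely many cells enter in each dimension. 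Taking $M_i:=W^{(i)}$ to be the $i$-skeleton then produces compact modules with $\pi_jM_i\xrightarrow{\sim}\pi_jM$ for $j<i$ and $M=\hocolim_iW^{(i)}$, which is precisely pseudo-coherence, while bounded homotopy is the hypothesis. The existence of such cell structures for bounded-below modules is standard, for instance via the model structure of \cite{EKMM1997}; the only genuine obstacle, and the crux of the proposition, is keeping each skeleton \emph{finite}, which the coherence of $\pi_0R$ guarantees.
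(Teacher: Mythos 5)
Your proposal is correct and follows essentially the same route as the paper's proof: the same thick-subcategory observation that compact $R$-modules have finitely presented homotopy groups, the same stabilisation argument for (2)$\Rightarrow$(3), testing against the compact objects $\Sigma^n R$ for (1)$\Rightarrow$(2), and a cellular-tower construction for (3)$\Rightarrow$(1) in which coherence of $\pi_0R$ is exactly what keeps each skeleton finite, hence compact. The only cosmetic difference is the bookkeeping in (3)$\Rightarrow$(1): the paper attaches cells of a fixed dimension to the successive cofibers $M^k$ of $M_k \to M$ and then passes to fibers, whereas you build the skeleta under $M$ directly by attaching generator and relation cells; both are mild variants of the EKMM/HPS cellular tower construction that the appendix cites.
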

\begin{proof}
  The implication (1) $\Rightarrow$ (2) is proven as in
  Proposition~\ref{pr:coherent-cohom}[(2) $\Rightarrow$ (3)]: indeed,
  it suffices to test against compact objects of the form
  $C = \Sigma^nR$ for all $n\in \bbZ$, for which the claim is
  clear. In order to see that (2) $\Rightarrow$ (3), we first observe
  that any compact $R$-module has finitely presented homotopy groups
  by assumption on $R$. Therefore, any homologically perfect
  $M \in \D_R$ can only have finitely many nonzero homotopy groups,
  all of which must be finitely presented over $\pi_0R$ by the Cauchy
  condition. Thus, $M$ is bounded.

  Now assume that $M \in \D_R^b$, then $M$ has bounded homotopy and
  it remains to show that $M$ has to be pseudo-coherent. To this end,
  we use a mild variant of the cellular tower construction of
  \cite[Thm.~III.2.10]{EKMM1997} or \cite[Prop.~2.3.1]{HPS1997}, in
  which we only attach $R$-cells of a fixed dimension in each
  step. Indeed, let $M \in \D_R$ be a bounded below $R$-module with
  finitely presented homotopy groups and assume without loss of
  generality that the lowest nonzero homotopy group is in degree
  $0$. Inductively, we construct a tower of $R$-modules
  $(M^k)_{k \ge 0}$ under $M$ with:
\begin{enumerate}
\item[(i)] $M^0 = M$.
\item[(ii)] For all $k\ge 0$, there is a cofiber sequence 
  \[
    F^k=\bigoplus_{G(k)} \Sigma^kR \lto M^k \lto M^{k+1},
\]
where the direct sum is indexed by a set $G(k)$ of generators of the finitely
presented $\pi_0R$-module $\pi_kM^k$.
\end{enumerate}

It follows by induction on $k$, the assumption on $R$, and the long
exact sequence in homotopy that $\pi_*M^{k+1}$ is finitely presented
over $\pi_0R$ in all degrees and zero below degree $k+1$, which allows
us to construct the map $F^{k+1} \to M^{k+1}$ and to proceed with the
induction.
 
Set $M_k=\fib(M \to M^k)$. The octahedral axiom provides fiber sequences 
\begin{equation}\label{eq:fibersequence}
M_k \lto M_{k+1} \lto F^k 
\end{equation}
and a sequence of $R$-modules
\[
M_0 \lto M_1 \lto M_2 \lto \ldots
\]
over $M$. The fiber sequences \eqref{eq:fibersequence} imply that
$\pi_iM_k \xrightarrow{_\sim} \pi_iM_{k+1}$ and hence
$\pi_iM_k \xrightarrow{_\sim} \pi_iM$ for all $i<k$, which then also
shows that the homotopy colimit over $(M_k)_{k\ge 0}$ is equivalent to
$M$ by a connectivity argument. Moreover, because $M_k$ is built from
finitely many $R$-cells, $M_k$ is compact for any $k\ge 0$. It follows
that $M$ is pseudo-coherent as desired.
\end{proof}

In light of Lemma~\ref{re:pseudo-coherent}, we obtain the following
consequence.

\begin{cor}
  With notation as in Proposition~\ref{prop:ringspectra}, taking homotopy
  colimits induces an equivalence
  $\widehat{\D_R^c}^b \xrightarrow{_\sim}
  \D_R^b$ of triangulated categories.\qed
\end{cor}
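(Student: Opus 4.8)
The plan is to apply Lemma~\ref{re:pseudo-coherent} to the compactly generated triangulated category $\T=\D_R$, with $\C=\T^c=\D_R^c$ and cohomological functor $H=\pi_0$. The sole hypothesis of that lemma is the equivalence (PC)~$\Leftrightarrow$~(HP) for every object of $\T$, which is exactly the equivalence of conditions (1) and (2) in Proposition~\ref{prop:ringspectra}. The connectivity and finiteness assumptions on $R$ were imposed precisely so that this proposition holds, so the lemma applies directly.

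Carrying this out, Lemma~\ref{re:pseudo-coherent} furnishes a fully faithful functor $F\colon\widehat{\D_R^c}^b\to\D_R$, $X\mapsto\hocolim_i X_i$, whose essential image is the full subcategory of pseudo-coherent $R$-modules with bounded homotopy. By the equivalence (1)~$\Leftrightarrow$~(3) of Proposition~\ref{prop:ringspectra}, this essential image is precisely $\D_R^b$, so $F$ restricts to an equivalence of categories $\widehat{\D_R^c}^b\xto{\ _\sim\ }\D_R^b$. To promote this to a triangle equivalence I invoke the last clause of Lemma~\ref{re:pseudo-coherent}: once $\T$ carries a morphic enhancement, $F$ is automatically a triangle functor, and a functor that is both an equivalence and a triangle functor is a triangle equivalence.

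The one step that genuinely uses the spectral setting is the existence of a morphic enhancement on $\D_R$. Here $\D_R$ is the homotopy category of the stable model category of right $R$-module spectra of \cite{EKMM1997}, and any stable model category (like any stable derivator) produces a morphic enhancement in the sense of the final appendix; I would simply cite this. The conceptual obstacle is that, unlike the affine case of Theorem~\ref{th:tria-completion}, the category $\D_R$ is topological rather than algebraic, so its completion cannot be triangulated through a Frobenius model and must instead be handled by the more general morphic machinery. Once the enhancement is in place the assembly above is immediate, which is why the statement is recorded as a corollary without separate proof.
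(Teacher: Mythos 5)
Your proof is correct and is essentially the paper's own argument: the corollary is recorded there as an immediate consequence of Lemma~\ref{re:pseudo-coherent}, whose hypothesis (PC) $\Leftrightarrow$ (HP) is supplied by Proposition~\ref{prop:ringspectra}(1)$\Leftrightarrow$(2), with the essential image identified as $\D_R^b$ via (1)$\Leftrightarrow$(3) and the triangulated structure coming from a morphic enhancement of $\D_R$ exactly as you describe. Your closing remark---that $\D_R$ is topological rather than algebraic, so the Frobenius-category route of Theorem~\ref{th:tria-completion} is unavailable and the morphic machinery of the final appendix is what makes the statement a one-line corollary---is precisely the point the paper is making.
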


In particular, the corollary applied to the Eilenberg--Mac~Lane ring
spectrum $H\Lambda$ of a right coherent ring $\Lambda$ recovers
Theorem~\ref{th:bounded-derived}.

\begin{lem}
  Let $R$ be as in Proposition~\ref{prop:ringspectra} and assume
  additionally that the right global dimension of $\pi_0R$ is finite,
  then $\D_R^b$ coincides with the thick subcategory
  of $\D_R^b$ generated by the Eilenberg--Mac~Lane $R$-module
  $H\pi_0R$.
\end{lem}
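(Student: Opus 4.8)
My plan is to prove the two inclusions separately. One direction is immediate: since $\pi_0(H\pi_0R)=\pi_0R$ is finitely presented over itself and $\pi_i(H\pi_0R)=0$ for $i\neq0$, the module $H\pi_0R$ lies in $\D_R^b$; and because $\pi_0R$ is coherent, the finitely presented $\pi_0R$-modules form an abelian category, so $\D_R^b$ is closed under suspensions, cofibers, and retracts, hence is thick. Thus $\Thick(H\pi_0R)\subseteq\D_R^b$ with no use of the global-dimension hypothesis. The real work is the reverse inclusion $\D_R^b\subseteq\Thick(H\pi_0R)$, which I would carry out in two stages: a reduction to Eilenberg--Mac~Lane modules, followed by an argument using finite global dimension.

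For the reduction, I would exploit the Postnikov filtration. After applying a shift I may assume $M\in\D_R^b$ is connective, and boundedness gives $\pi_iM=0$ for $i\gg0$ as well, so the Postnikov tower of $M$ is finite and exhibits $M$ as an iterated cofiber of its layers $\Si^iH(\pi_iM)$. Each $\pi_iM$ is a finitely presented $\pi_0R$-module, and the $R$-action on a layer with homotopy concentrated in a single degree factors through the truncation $R\to H\pi_0R$; so each layer is the Eilenberg--Mac~Lane $H\pi_0R$-module $H(\pi_iM)$, restricted to an $R$-module. Since $\Thick(H\pi_0R)$ is closed under suspensions and cofibers, it then suffices to show that $H(N)\in\Thick(H\pi_0R)$ for every finitely presented $\pi_0R$-module $N$.

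This is where the hypothesis on the global dimension enters. Because $\pi_0R$ is right coherent of finite right global dimension, every finitely presented $N$ admits a finite resolution $0\to P_d\to\cdots\to P_0\to N\to0$ by finitely generated projective $\pi_0R$-modules: coherence forces successive syzygies to be finitely presented, and finiteness of the global dimension forces the $d$-th syzygy to be projective, hence finitely generated projective. Splicing this resolution into short exact sequences and applying the Eilenberg--Mac~Lane functor turns each short exact sequence of $\pi_0R$-modules into a cofiber sequence of $H\pi_0R$-modules, the connecting map being the class of the extension; restricting scalars along $R\to H\pi_0R$ these remain cofiber sequences in $\D_R$ and build $H(N)$ from the finitely many $H(P_k)$. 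Each $P_k$ is a retract of a finite free module $(\pi_0R)^{n_k}$, so $H(P_k)$ is a retract of $(H\pi_0R)^{n_k}$ and lies in $\Thick(H\pi_0R)$. Hence $H(N)\in\Thick(H\pi_0R)$, which completes the reverse inclusion.

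The main obstacle I anticipate is the bookkeeping around module structures: one must verify that the Postnikov layers genuinely carry an $H\pi_0R$-module structure restricting to their given $R$-action, so that a projective resolution of $N$ over the \emph{ordinary} ring $\pi_0R$ can be transported into a tower in $\D_R$, and that splicing yields honest cofiber sequences after restriction of scalars. Both points are standard once phrased through the equivalence $\D_{H\pi_0R}\simeq\bfD(\pi_0R)$, under which $H\pi_0R$ corresponds to $\pi_0R$ and $\Thick(H\pi_0R)$ to the perfect complexes: a finitely presented module of finite projective dimension is perfect, and restriction of scalars along $R\to H\pi_0R$ carries $\Thick_{\D_{H\pi_0R}}(H\pi_0R)$ into $\Thick_{\D_R}(H\pi_0R)$.
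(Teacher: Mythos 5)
Your proposal is correct and follows the same route as the paper's own (much terser) proof: finite projective resolutions of finitely presented $\pi_0R$-modules, guaranteed by coherence plus finite right global dimension, place each $HN$ in $\Thick(H\pi_0R)$; a finite Postnikov tower argument then handles arbitrary $M\in\D_R^b$; and the converse inclusion follows from boundedness of $H\pi_0R$ together with thickness of $\D_R^b$. The only difference is presentational — you run the Postnikov reduction before the resolution argument and spell out the module-structure bookkeeping via $\D_{H\pi_0R}\simeq\bfD(\pi_0R)$, which the paper leaves implicit.
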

\begin{proof}
  Since $\pi_0R$ has finite right global dimension, any finitely
  presented $\pi_0R$-module $N$ admits a finite length resolution by
  finitely presented projective $\pi_0R$-modules. This implies that the
  $R$-module spectrum $HN$ belongs to $\Thick(H\pi_0R)$. A
  Postnikov tower argument then shows that any $M \in \D_R^b$
  is in $\Thick(H\pi_0R)$.

  Conversely, since the $R$-module $H\pi_0R$ is bounded, so is any $R$-module
   that belongs to $\Thick(H\pi_0R)$.
\end{proof}

The stable homotopy category identifies with $\D_R$ for the sphere
spectrum $R = S^0$ and we obtain the following consequence, a variant
of which has appeared independently in work of
Neeman~\cite[Ex.~22]{Ne2019}.

\begin{cor}
  For a spectrum $X$ in the stable homotopy category the following
  conditions are equivalent:
\begin{enumerate}
\item $X$ is pseudo-coherent and has bounded homotopy.
\item $X$ is homologically perfect.
\item $X$ belongs to the thick subcategory generated by the spectrum
  $H\bbZ$.
\end{enumerate}
In particular, a homologically perfect spectrum $X$ is compact if and
only if $X = 0$.
\end{cor}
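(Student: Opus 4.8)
The plan is to obtain all three equivalences by specialising the two preceding results to the sphere spectrum, and then to deduce the final assertion from the resulting identification of homologically perfect spectra with $\Thick(H\bbZ)$.

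First I would take $R = S^0$, so that $\D_R$ is the stable homotopy category. Then $\pi_0 R = \bbZ$ is noetherian, hence right coherent, and by Serre's finiteness theorem each stable stem $\pi_i R = \pi_i S^0$ is a finitely generated, so finitely presented, abelian group. Thus Proposition~\ref{prop:ringspectra} applies verbatim and already delivers the equivalence of (1), (2), and the condition $X \in \D_R^b$. Since $\bbZ$ has global dimension $1$, in particular finite, the Lemma immediately preceding this corollary then identifies $\D_R^b$ with $\Thick(H\pi_0 R) = \Thick(H\bbZ)$. Chaining these two inputs yields (1) $\Leftrightarrow$ (2) $\Leftrightarrow$ (3).

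For the final statement I would argue that a homologically perfect spectrum, being an object of $\Thick(H\bbZ)$ by the equivalence just proved, is compact precisely when it vanishes. The zero object is trivially compact, so the content is that $\Thick(S^0) \cap \Thick(H\bbZ) = 0$, that is, no nonzero finite spectrum is built from $H\bbZ$. The cleanest route exploits that every object of $\Thick(H\bbZ)$ underlies a perfect $H\bbZ$-module and that, because $\bbZ$ has global dimension $1$, such a module is formal: it splits as $X \simeq \bigvee_n \Sigma^n H(\pi_n X)$ with each $\pi_n X$ finitely generated. If $X$ were compact and nonzero, some summand $\Sigma^n H(\pi_n X)$ with $\pi_n X \neq 0$ would be a nonzero compact retract of $X$, hence itself compact, forcing $HA$ to be a finite spectrum for the nonzero finitely generated abelian group $A = \pi_n X$.

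The main obstacle is precisely this last point, where genuine homotopy-theoretic input is unavoidable: one must know that $HA$ is finite only for $A = 0$. I would settle it by a homology computation, choosing a prime $p$ for which $A$ has nontrivial $p$-primary part (or is infinite) and observing that $H_*(HA;\bbF_p)$ is then infinite-dimensional, containing the relevant portion of the dual Steenrod algebra, whereas the mod-$p$ homology of any finite spectrum is finite-dimensional in total. This contradiction forces $X = 0$ and completes the proof.
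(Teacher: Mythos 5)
Your handling of the equivalence of (1), (2) and (3) is correct and is precisely the paper's argument: Serre finiteness of the stable stems and $\pi_0S^0\cong\bbZ$ let you apply Proposition~\ref{prop:ringspectra}, and finiteness of the global dimension of $\bbZ$ lets you apply the preceding lemma to get $\D_{S^0}^b=\Thick(H\bbZ)$. The gap is in your proof of the last assertion, at the step ``every object of $\Thick(H\bbZ)$ underlies a perfect $H\bbZ$-module and therefore splits as $\bigvee_n\Sigma^nH(\pi_nX)$''. This is false. A thick subcategory of the stable homotopy category is closed under cones of \emph{arbitrary} maps of spectra, and such cones carry no natural $H\bbZ$-module structure; in fact the essential image of the forgetful functor $\D_{H\bbZ}\to\D_{S^0}$ is not thick. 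Concretely, $H\bbF_2=\Cone(2\colon H\bbZ\to H\bbZ)$ lies in $\Thick(H\bbZ)$, hence so does $C=\Cone(Sq^2\colon H\bbF_2\to\Sigma^2H\bbF_2)$. If $C$ split as $\Sigma H\bbF_2\vee\Sigma^2H\bbF_2$ (the only possible wedge of Eilenberg--Mac\,Lane spectra with its homotopy groups), then the truncation $C\to\Sigma H\bbF_2$ would admit a section, forcing $Sq^2=0$ because consecutive maps in a cofibre sequence compose to zero. But the underlying spectrum of any $H\bbZ$-module \emph{is} such a wedge, since $\D_{H\bbZ}\simeq\bfD(\bbZ)$ and $\bbZ$ is hereditary; so $C$ underlies no $H\bbZ$-module, perfect or otherwise. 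Thus the compact wedge summand $\Sigma^nHA$ that you want to extract from a nonzero compact $X\in\Thick(H\bbZ)$ need not exist. (Your remaining homological input --- that $H_*(HA;\bbF_p)$ is infinite dimensional for a suitable prime $p$ when $A\neq 0$ is finitely generated, while a finite spectrum has finite-dimensional mod-$p$ homology --- is correct, but it is applied to these possibly nonexistent summands.)

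The paper's proof of the final claim sidesteps all structure theory of $\Thick(H\bbZ)$: by the equivalences already established, a homologically perfect $X$ lies in $\D_{S^0}^b$ and so has only finitely many nonzero homotopy groups, whereas by a theorem of Serre a nonzero finite (equivalently, compact) spectrum has infinitely many nonzero homotopy groups; hence compactness forces $X=0$. If you wanted to salvage your line of argument, you would have to prove instead that any nonzero spectrum with finitely many nonzero, finitely generated homotopy groups has infinite-dimensional $\bbF_p$-homology for some prime $p$ --- an induction over the Postnikov tower in which the nontrivial $k$-invariants exhibited above are exactly what must be controlled --- and that is genuinely more work than the paper's one-line citation of Serre's theorem.
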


 \begin{proof}
   By the finite generation of the stable homotopy groups of spheres
   and because $\pi_0S^0 \cong \bbZ$, the sphere spectrum $R = S^0$
   satisfies the conditions of the previous lemma, so we have
   $\D_{S^0}^b=\Thick(H\bbZ)$.
  
  A theorem of Serre says that a finite spectrum must have infinitely
  many nonzero homotopy groups, so this result implies in particular
  that a homologically perfect spectrum $X$ is compact if and only if
  $X = 0$.
\end{proof}

Note that, in contrast to the perfect derived categories of right
coherent rings or noetherian schemes, the category $\D_{S^0}^b$ does
not contain $\D_{S^0}^c$ as a subcategory.

\section{Homologically perfect objects on noetherian schemes\\[.5em]
by Tobias Barthel and Henning Krause}

Throughout this appendix, all schemes will be assumed to be separated
and noetherian. For a scheme $\bbX$, we write $\bfD(\Qcoh\bbX)$ for
the derived category of quasi-coherent sheaves on $\bbX$ and $G_\bbX$
denotes a compact generator of $\bfD(\Qcoh\bbX)$, which exists by
\cite[Thm.~3.3.1]{BVB2003}. The notion of homologically perfect
object considered in this appendix is defined with respect to the
cohomological functor $H^0$, taking the degree zero cohomology of an
object in $\bfD(\Qcoh\bbX)$. Our goal is to give an intrinsic
description of the homologically perfect objects in $\bfD(\Qcoh\bbX)$
that does not depend on the chosen t-structure.  For affine schemes,
this has already been observed in Lemma~\ref{le:intrinsic}, but the non-affine
case is more complicated and relies crucially on Neeman's work on
approximability \cite{Ne2017,Ne2018}. The main result is:

\begin{thm}\label{thm:intrinsic} 
  Let $\bbX$ be a separated noetherian scheme. An object
  $X\in\bfD(\Qcoh\bbX)$ belongs to $\bfD^b(\coh\bbX)$ if and only if
  $X$ is the homotopy colimit of a sequence
  $X_0 \to X_1 \to X_2 \to \cdots$ of perfect complexes on $\bbX$
  satisfying the following conditions for every
  $C \in \bfD^\per(\bbX)$:
\begin{enumerate}
\item $\Hom(C,X_s) \iso \Hom(C,X_{s+1})$ for $s \gg
  0$, and
\item $\Hom(C,\Sigma^nX) = 0$ for $\lvert n \rvert \gg 0$.
\end{enumerate}
\end{thm}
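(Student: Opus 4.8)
The plan is to recognise conditions (1) and (2) as Neeman's intrinsic description of $\bfD^b(\coh\bbX)$ and to reconcile it with the completion framework of the main text. Write $\T=\bfD(\Qcoh\bbX)$, so that $\T$ is compactly generated with $\T^c=\bfD^\per(\bbX)$ and admits a compact generator $G_\bbX$ by \cite{BVB2003}; since a compact generator generates the subcategory of compact objects as a thick subcategory, $G_\bbX$ generates $\T^c$ as a triangulated category. Applying Lemma~\ref{le:intrinsic} with $H=\Hom(G_\bbX,-)$, conditions (1) and (2) taken together are exactly the assertion that $X$ is homologically perfect with respect to $\Hom(G_\bbX,-)$. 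On the other hand, Proposition~\ref{pr:coherent-cohom} identifies $\bfD^b(\coh\bbX)$ with the objects that are homologically perfect with respect to the degree zero cohomology functor $H^0$. The theorem thus reduces to the claim that homological perfectness with respect to the global functor $\Hom(G_\bbX,-)$ and with respect to the local functor $H^0$ define the same subcategory of $\T$.

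For the direction $\bfD^b(\coh\bbX)\Rightarrow(1),(2)$ I would argue directly. Condition (1) is part of the conclusion of Proposition~\ref{pr:coherent-cohom}: writing $X$ as the homotopy colimit of its perfect truncations exhibits it as the homotopy colimit of a Cauchy sequence in $\bfD^\per(\bbX)$. For condition (2), fix $C\in\bfD^\per(\bbX)$ and observe that $\RHOM(C,X)$ again lies in $\bfD^b(\coh\bbX)$, as $C$ is perfect and $X$ has bounded coherent cohomology. Since $\Hom(C,\Si^nX)$ is the degree $n$ cohomology of $\bfR\Gamma(\bbX,\RHOM(C,X))$, and since $\bbX$, being quasi-compact and separated, is covered by finitely many affine opens with affine intersections, the \v{C}ech complex shows that $\bfR\Gamma(\bbX,-)$ has finite cohomological amplitude; hence $\Hom(C,\Si^nX)=0$ for $\lvert n\rvert\gg 0$.

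The substance of the theorem is the converse. Assuming (1) and (2), I must produce bounded coherent cohomology. It is worth emphasising at the outset why this is not formal: in the affine case of Lemma~\ref{le:intrinsic} the functor $H^0$ coincides with $\Hom(\cO_\bbX,-)$, so the Cauchy condition reads off the cohomology directly; on a non-affine scheme the global groups $\Hom(C,\Si^nX)$ involve the sheaf cohomology of $\bbX$ and no longer detect the cohomology sheaves $H^nX$, and indeed a Cauchy homotopy colimit of perfect complexes may well have unbounded, non-coherent cohomology once condition (2) is dropped. The bridge is Neeman's approximability of $\T$ \cite{Ne2017,Ne2018}: condition (2), tested against $G_\bbX$ and using that $G_\bbX$ detects the preferred (here, standard) t-structure, places $X$ in the intrinsically bounded part $\T^b$, while condition (1), in the presence of this boundedness and of approximability, identifies $X$ as an object approximable by compact objects to every cohomological order, that is, as a member of Neeman's $\T^-_c=\bfD^-(\coh\bbX)$. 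The main obstacle is precisely this last identification --- translating the Hom-theoretic stabilisation of condition (1) into a genuine cohomological approximation of $X$ by the compact $X_i$ --- and it is here that approximability is indispensable. Granting it, $X\in\T^-_c\cap\T^b=\T^b_c$, and Neeman's theorem $\T^b_c=\bfD^b(\coh\bbX)$ yields the conclusion.

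Finally, I would note that the bijection realised by $X\mapsto\hocolim_iX_i$ is the one already underlying Theorem~\ref{th:scheme-bounded-derived}, so that once the two descriptions of the underlying class of objects are matched, no separate check of functoriality or of compatibility with the triangulated structure is required.
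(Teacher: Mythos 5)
Your forward direction is fine (it is the standard argument, and it matches the paper's use of Proposition~\ref{pr:coherent-cohom} and the easy half of Lemma~\ref{le:scheme-bounded}), and your appeal to Neeman's approximability to deduce from condition (2) that $X$ lies in $\bfD^b(\Qcoh\bbX)$ is exactly what the paper does in Lemma~\ref{le:scheme-bounded}. But the converse has a genuine gap at precisely the point you flag as ``the main obstacle'' and then dispose of with the words ``Granting it'': nothing in your proposal actually converts the Cauchy condition (1) into membership in $\T^-_c$, i.e.\ into coherence of the cohomology sheaves of $X$. This is not a step one can outsource to ``approximability'' as a black box. The Cauchy condition is a Hom-stabilisation statement whose threshold $n_C$ depends on the test object $C$; it says nothing a priori about the cohomological position of the cones of $X_i\to X_{i+1}$, and there is no result in \cite{Ne2017,Ne2018} that one can cite verbatim to pass from it to a cohomological approximation of $X$ by the $X_i$. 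A proof has to be supplied, and supplying it is the actual content of the appendix.

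The paper fills this gap by localising rather than by working globally with $\T^-_c$. First, Lemma~\ref{lem:pullback} shows that for an open immersion $i\colon\bbU\to\bbX$ the pulled-back sequence $(i^*X_s)_s$ is again Cauchy in $\bfD^\per(\bbU)$; this is where Neeman's theorem enters concretely, namely through the statement $\bfR i_*\cO_\bbU\in\sgen{G_\bbX}{N_0}{A_0}{B_0}$ of \cite[Thm.~0.18]{Ne2017}, which allows an induction on the number $N$ of allowed extensions (with the five lemma) to show that the Cauchy property, tested against $G_\bbX$ twisted by duals of shifts of $G_\bbX$, survives tensoring with $\bfR i_*\cO_\bbU$. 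Second, Lemma~\ref{lem:coherent} then checks coherence locally: on an affine open $\bbU$ one has $H^n(i^*X_s)\cong\Hom(\Si^{-n}\cO_\bbU,i^*X_s)$, so the Cauchy property of $(i^*X_s)_s$ forces the cohomology to stabilise to something coherent, and coherence is a local property. Combined with Lemma~\ref{le:scheme-bounded} for boundedness, this gives $X\in\bfD^b(\coh\bbX)$. So your outline identifies the right ingredients and the right difficulty, but the decisive step --- the passage from Hom-theoretic stabilisation to coherent cohomology --- is asserted, not proved, and it is exactly the step requiring the two nontrivial lemmata of the paper.
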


Before we give the proof of the theorem at the end of this appendix,
we record the following consequence, which is an immediate application
of Proposition~\ref{pr:coherent-cohom} and
Theorem~\ref{thm:intrinsic}. It shows in particular that the
completion
\[\widehat{\bfD^\per(\bbX)}^b\xto{\ _\sim\ }\bfD^b(\coh\bbX)\] depends
only on the triangulated structure of $\bfD^\per(\bbX)$.

\begin{cor} The homologically perfect objects on a separated
noetherian scheme $\bbX$ with respect to the standard t-structure
depend only on the triangulated structure of $\bfD(\Qcoh\bbX)$.\qed
\end{cor}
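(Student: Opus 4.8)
The plan is to reduce the theorem to the single assertion that homological perfection is insensitive to the cohomological functor used, namely that the objects homologically perfect with respect to $H^0$ coincide with those homologically perfect with respect to $\Hom(G_\bbX,-)$. First I would apply Lemma~\ref{le:intrinsic} with the compact generator $G=G_\bbX$, which generates $\bfD^\per(\bbX)=\T^c$ as a triangulated category: its condition (2) is precisely the pair of conditions (1) and (2) in the theorem, so the right-hand side describes exactly the objects homologically perfect with respect to $H=\Hom(G_\bbX,-)$. On the other hand, Proposition~\ref{pr:coherent-cohom} identifies $\bfD^b(\coh\bbX)$ with the objects homologically perfect with respect to $H^0$. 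Since both notions rest on the same Cauchy condition (HP1), the theorem becomes the statement that, for a Cauchy sequence $(X_i)_{i\ge0}$ of perfect complexes with $X=\hocolim_iX_i$, having bounded coherent cohomology is equivalent to the vanishing $\Hom(C,\Si^nX)=0$ for all perfect $C$ and $|n|\gg0$.

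For the implication $X\in\bfD^b(\coh\bbX)\Rightarrow(1),(2)$ I would proceed as follows. Proposition~\ref{pr:coherent-cohom} already furnishes a presentation $X=\hocolim_iX_i$ by a Cauchy sequence of perfect complexes, which is condition (1). For condition (2), since $C$ is perfect the object $\RHOM(C,X)\cong C^\vee\Lotimes X$ lies in $\bfD^b(\coh\bbX)$ and $\Hom(C,\Si^nX)\cong H^nR\Gamma(\bbX,\RHOM(C,X))$; because $\bbX$ is separated and quasi-compact, a finite affine cover bounds its cohomological dimension, so derived global sections send bounded complexes to bounded complexes and this group vanishes for $|n|\gg0$. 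This is the routine half.

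The reverse implication $(1),(2)\Rightarrow X\in\bfD^b(\coh\bbX)$ is the heart of the matter and the expected main obstacle. Boundedness of the cohomology is the lesser difficulty: testing condition (2) against a finite family of perfect complexes (for instance Koszul complexes with prescribed support) that jointly detect cohomology over a finite affine cover forces $H^n(X)=0$ for $|n|\gg0$, the finiteness of the cover making the threshold uniform. The genuine problem is coherence of the surviving cohomology sheaves. One has $H^n(X)=\colim_iH^n(X_i)$, a filtered colimit of coherent sheaves that is only manifestly quasi-coherent; the Cauchy condition controls $\Hom(\Si^mG_\bbX,X_i)$ rather than the cohomology sheaves themselves, and over a non-affine $\bbX$ the generator $G_\bbX$ differs from $\cO_\bbX$, so the stabilisation $H^n(X_i)\iso H^n(X_{i+1})$ that is automatic in the affine case is not available here. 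This is exactly the point at which I would invoke Neeman's approximability \cite{Ne2017,Ne2018}, in the precise form of Lemma~\ref{le:scheme-bounded}, to reconcile the metric coming from the triangulated structure with the one coming from the t-structure: approximability promotes the triangulated boundedness in (2) to genuine pseudo-coherence of $X$ and thereby identifies the class of such $X$ with $\bfD^b(\coh\bbX)$. The crux is thus the passage from stabilisation of $\Hom(\Si^mG_\bbX,X_i)$ together with the vanishing in (2) to coherence of $\colim_iH^n(X_i)$, which appears to require the full strength of Neeman's approximation results.
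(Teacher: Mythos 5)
Your opening reduction coincides with the paper's own derivation of the corollary: it is exactly the conjunction of Proposition~\ref{pr:coherent-cohom} and Theorem~\ref{thm:intrinsic}, with Lemma~\ref{le:intrinsic} translating conditions (1) and (2) into homological perfection with respect to $\Hom(G_\bbX,-)$. Your forward implication (bounded coherent $\Rightarrow$ (1),(2)) is also correct and is the standard argument. The problems lie in the reverse implication, where you have the two hard inputs interchanged. First, the boundedness step is not elementary, and your proposed finite family of Koszul complexes does not ``jointly detect cohomology'': a perfect complex supported on a closed subset $Z$ only detects the local cohomology $\bfR\Gamma_Z X$, not the restriction $X|_\bbU$ to an affine open, and the group that does detect cohomology sheaves, $\Hom_\bbU(\cO_\bbU,\Si^n X|_\bbU)\cong\Hom_\bbX(\cO_\bbX,\Si^n\bfR i_*i^*X)$, is a Hom into a modification of $X$ rather than a Hom out of a perfect complex into $X$, so condition (2) does not control it directly. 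Producing a finite family of perfect complexes detecting the quasi-coherent cohomology sheaves of an arbitrary object with uniform shift bounds is essentially equivalent to the statement that the t-structure generated by $G_\bbX$ is interleaved with the standard one, i.e.\ to Neeman's approximability; this is precisely why Lemma~\ref{le:scheme-bounded} is a genuine theorem, proved via \cite[Thm.~4.2]{LN2007} for the bounded-above half and \cite[Ex.~3.6]{Ne2018} for the bounded-below half. You may of course simply cite that lemma here, but then it is doing the work you labelled the lesser difficulty.

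Second, and this is the fatal gap, coherence cannot be obtained the way you propose. Lemma~\ref{le:scheme-bounded} characterises membership in $\bfD^b(\Qcoh\bbX)$: condition (2) alone is satisfied by \emph{every} bounded complex of quasi-coherent sheaves, in particular by complexes with non-coherent cohomology, so no appeal to approximability can ``promote the triangulated boundedness in (2) to genuine pseudo-coherence''. Coherence must be extracted from the Cauchy condition (1), and the obstacle you correctly identify --- that over non-affine $\bbX$ the Cauchy condition controls $\Hom(\Si^m G_\bbX,X_i)$ rather than $H^n(X_i)$ --- is overcome in the paper not by Lemma~\ref{le:scheme-bounded} but by Lemma~\ref{lem:pullback}: for an open immersion $i\colon\bbU\to\bbX$, the restriction $(i^*X_i)_i$ of a Cauchy sequence of perfect complexes is again Cauchy. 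Its proof is the technical core of the appendix and uses a different result of Neeman, the strong generation theorem \cite[Thm.~0.18]{Ne2017}, which places $\bfR i_*\cO_\bbU$ in some $\sgen{G_\bbX}{N_0}{A_0}{B_0}$ and permits an induction on the number of extensions via the five lemma. Once this is available, Lemma~\ref{lem:coherent} settles the matter on affine pieces, where $\colim_i H^n(X_i)$ stabilises, and coherence is a local property. Your proposal contains no substitute for this localisation step, so the implication $(1),(2)\Rightarrow X\in\bfD^b(\coh\bbX)$ remains unproved.
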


We will prepare for the proof of the theorem with three lemmata which
make use of Neeman's study of strong generators and approximability
for triangulated categories.

\begin{lem}\label{lem:pullback}
  Let $i\colon \bbU \to \bbX$ be an open immersion.  If $(X_s)_s$ is a
  Cauchy sequence in $\bfD^\per(\bbX)$, then $(i^*X_s)_s$ is a Cauchy
  sequence in $\bfD^\per(\bbU)$.
\end{lem}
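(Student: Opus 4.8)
The plan is to reduce the Cauchy condition on $\bbU$ to the one that is assumed on $\bbX$, using the adjunction $i^*\dashv \bfR i_*$ together with the local cohomology triangle for the closed complement $Z:=\bbX\setminus\bbU$. First I would record that $i^*$ preserves perfect complexes: its right adjoint $\bfR i_*$ preserves coproducts (the immersion $i$ is quasi-compact, as $\bbX$ is noetherian), so $i^*$ preserves compactness and $(i^*X_s)_s$ is genuinely a sequence in $\bfD^\per(\bbU)$. By Lemma~\ref{le:cauchy-generator} it suffices to test the Cauchy condition against one compact generator and its suspensions, and since a Verdier localisation carries a compact generator to a compact generator I may take $G_\bbU=i^*G_\bbX$. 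Moreover, by Thomason--Trobaugh \cite{TT1990} every object of $\bfD^\per(\bbU)$ is a direct summand of $i^*C$ for some $C\in\bfD^\per(\bbX)$, and because the comparison map is natural in its target, it is enough to test against the objects $i^*C$ with $C\in\bfD^\per(\bbX)$.

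Next I would transport the question to $\bbX$. For perfect $C$ the adjunction gives $\Hom_\bbU(i^*C,i^*X_s)\cong\Hom_\bbX(C,\bfR i_*i^*X_s)$, so the task becomes the stabilisation in $s$ of $\Hom_\bbX(\Si^nC,\bfR i_*i^*X_s)$ for every $n$. I would then apply $\Hom_\bbX(\Si^nC,-)$ to the triangle $\bfR\Ga_Z X_s\lto X_s\lto \bfR i_*i^*X_s\lto\Si\,\bfR\Ga_Z X_s$ and compare the long exact sequences for $s$ and $s+1$. By the five lemma the claim reduces to stabilisation of the middle and of the support terms. The middle terms $\Hom_\bbX(\Si^nC,X_s)$ stabilise because $(X_s)_s$ is Cauchy and $\Si^nC$ is compact. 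For the support terms I would use the identity $\Hom_\bbX(A,\bfR\Ga_Z B)\cong\Hom_\bbX(\bfR\Ga_Z A,B)$, which holds because $i^*\bfR\Ga_Z=0$ and $i^!=i^*$ for the open immersion $i$; this rewrites them as $\Hom_\bbX(\Si^n\bfR\Ga_Z C,X_s)$.

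The whole statement is thereby reduced to one assertion: for each perfect $C$ the sequence $\Hom_\bbX(\Si^n\bfR\Ga_Z C,X_s)$ stabilises in $s$. The delicate point -- and the main obstacle -- is that $\bfR\Ga_Z C$ is \emph{not} compact, so the Cauchy hypothesis, which only controls maps out of perfect complexes, does not apply to it directly. This is exactly where Neeman's approximability of $\bfD(\Qcoh\bbX)$ \cite{Ne2017,Ne2018} enters. The object $\bfR\Ga_Z C$ is bounded (local cohomology on a noetherian scheme has finite cohomological dimension) and lies in the subcategory $\bfD_Z$ of complexes supported on $Z$, which is compactly generated by perfect complexes on $\bbX$ supported on $Z$. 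I would write $\bfR\Ga_Z C$ as a homotopy colimit of such compact objects $P_t$, whose fibres lie arbitrarily far down the standard t-structure, with approximability furnishing the strong generation that bounds the number of building steps \emph{uniformly} in $t$. For each fixed $P_t$ the sequence $\Hom_\bbX(\Si^nP_t,X_s)$ stabilises by the Cauchy hypothesis; the uniform bound is what permits interchanging this stabilisation with the homotopy colimit in $t$ -- controlling the $\lim^1$-term via Lemma~\ref{le:phantom} -- and so yields stabilisation for $\bfR\Ga_Z C$ itself. Making the dependence of the stabilisation thresholds on $t$ uniform is the crux; the preceding reductions are formal.
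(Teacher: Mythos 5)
Your reduction stands or falls with the identity $\Hom_\bbX(A,\bfR\Ga_Z B)\cong\Hom_\bbX(\bfR\Ga_Z A,B)$, and this identity is false. The functor $\bfR\Ga_Z$ is the \emph{right} adjoint of the inclusion $\bfD_Z(\Qcoh\bbX)\hookrightarrow\bfD(\Qcoh\bbX)$, so $\Hom(A,\bfR\Ga_Z B)\cong\Hom(A,B)$ holds only when $A$ already lies in $\bfD_Z(\Qcoh\bbX)$; the adjoint partner of $\bfR\Ga_Z$ on the other side is derived completion along $Z$ (Greenlees--May duality), not $\bfR\Ga_Z$ itself. The facts you invoke ($i^*\bfR\Ga_Z=0$ and $i^!=i^*$) give only the semiorthogonal vanishing $\Hom(\bfD_Z(\Qcoh\bbX),\bfR i_*i^*A)=0$; your identity would need the vanishing in the \emph{opposite} direction, $\Hom(\bfR i_*i^*A,\Si^m\bfR\Ga_Z B)=0$, which fails -- the connecting morphism $\bfR i_*i^*\cO_\bbX\to\Si\bfR\Ga_Z\cO_\bbX$ of the local cohomology triangle is itself generally nonzero. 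Concretely, take $\bbX=\Spec k[x]$, $Z=V(x)$, $A=B=\cO_\bbX$: then $\Hom(A,\bfR\Ga_Z B)=H^0_Z(k[x])=0$, while $\bfR\Ga_Z A\cong\Si^{-1}\bigl(k[x,x^{-1}]/k[x]\bigr)$ gives $\Hom(\bfR\Ga_Z A,B)\cong\Ext^1_{k[x]}\bigl(k[x,x^{-1}]/k[x],k[x]\bigr)\cong k[[x]]\neq 0$. So the support terms $\Hom_\bbX(\Si^nC,\bfR\Ga_Z X_s)$ cannot be rewritten as $\Hom_\bbX(\Si^n\bfR\Ga_Z C,X_s)$, and the object you propose to approximate in your final paragraph is the wrong one.

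Everything before that step (adjunction, the local cohomology triangle, the five lemma, stabilisation of the terms $\Hom(\Si^nC,X_s)$ from the Cauchy hypothesis) is sound and runs parallel to the paper's argument; what is missing is a correct treatment of the remaining term. The paper keeps it in the form $\bfR i_*i^*X_s\simeq(\bfR i_*\cO_\bbU)\Lotimes X_s$ (projection formula) and proves, by induction on $N$, that $\Hom(G_\bbX,\F\Lotimes X_s)$ stabilises in $s$ for every $\F\in\sgen{G_\bbX}{N}{A}{B}$, with a threshold $f(N,A,B)$ depending only on $(N,A,B)$; Neeman's approximability theorem \cite[Thm.~0.18]{Ne2017} supplies finite $N_0,A_0,B_0$ with $\bfR i_*\cO_\bbU\in\sgen{G_\bbX}{N_0}{A_0}{B_0}$. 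The base case uses dualizability of perfect complexes, $\Hom(G_\bbX,\Si^kG_\bbX\Lotimes X_s)\cong\Hom(G_\bbX\Lotimes\Si^{-k}G_\bbX^{\vee},X_s)$, to which the Cauchy hypothesis applies, and the inductive step is a five lemma argument whose thresholds stay uniform because one only ever takes maxima over finitely many data. This finite induction is precisely what resolves the uniformity problem you correctly single out as the crux and leave open: an interchange of stabilisation in $s$ with a homotopy colimit over a sequence of compacts, controlled only by the Milnor sequence of Lemma~\ref{le:phantom}, does not by itself produce uniform thresholds.
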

\begin{proof} First note that, because $i$ is an open immersion and
thus automatically quasi-compact, $i^*$ exhibits $\bfD(\Qcoh \bbU)$ as
the essential image of a smashing Bousfield localisation on
$\bfD(\Qcoh\bbX)$. Therefore, $i^*$ preserves homotopy colimits and
compact object and it has a fully faithful right adjoint $\bfR{i}_*$. In
particular, $i^*G_\bbX$ is a compact generator of $\bfD(\Qcoh \bbU)$,
which we will denote by $G_\bbU$. Moreover, it follows from the
projection formula that for every $Y \in \bfD(\Qcoh\bbX)$ there is a
canonical quasi-isomorphism
\begin{equation}\label{eq:smashing} \bfR{i}_*i^*Y \simeq (\bfR{i}_*\cO_\bbU)
\Lotimes Y,
\end{equation} where $\cO_\bbU$ is the structure sheaf of $\bbU$.

In order to prove the lemma, it suffices by Lemma~\ref{le:cauchy-generator}
to show that for every $k \in \bbZ$ there exists an $s(k)$ such that
for all $s>s(k)$:
\[ \Hom(\Sigma^kG_\bbU, i^*X_s) \iso
\Hom(\Sigma^kG_\bbU, i^*X_{s+1}).
\] Without loss of generality, we will demonstrate the existence of
$s(0)$; the remaining cases follow by an analogous argument applied to
the shifts of $G_\bbU$. By adjunction, choice of $G_\bbU = i^*G_\bbX$,
and substituting \eqref{eq:smashing}, we thus have to show that there
exists an integer $s(0)$ such that for all $s> s(0)$ and $\F =
\bfR{i}_*\cO_\bbU \in \bfD(\Qcoh\bbX)$:
\begin{equation}\label{eq:claim} \Hom(G_{\bbX}, \F \Lotimes X_s)
\iso \Hom(G_{\bbX}, \F \Lotimes X_{s+1}).
\end{equation} Note that the class of objects $\F \in \bfD(\Qcoh\bbX)$
for which \eqref{eq:claim} holds is closed under retracts and
arbitrary direct sums as $G_\bbX$ is compact and $\Lotimes$ commutes
with direct sums.

The object $\cO_\bbU$ is perfect on $U$, so we may invoke Neeman's result
\cite[Thm.~0.18]{Ne2017}: there exist $A_0,B_0,N_0$ with $A_0 \le B_0$
such that $\bfR{i}_*\cO_\bbU \in \sgen{G_\bbX}{N_0}{A_0}{B_0}$, i.e.,
$\bfR{i}_*\cO_\bbU$ can be built from the collection $(\Sigma^k
G_\bbX)_{A_0\le k \le B_0}$ using direct sums, retracts, and at most
$N_0$ extensions; we refer to \cite{Ne2017} for the precise
definition of $\sgen{G_\bbX}{N}{A}{B}$. For $\F = \Sigma^kG_\bbX$ and
writing $G_{\bbX}^{\vee}$ for the dual of $G_{\bbX}$, the Cauchy
property applied to $C = G_{\bbX} \Lotimes
\Sigma^{-k}G_{\bbX}^{\vee}$ provides an integer $t_k$ such that for
all $s>t_k$ there is an isomorphism
\[ \Hom(G_{\bbX} \Lotimes \Sigma^{-k}G_{\bbX}^{\vee}, X_s)
\iso \Hom(G_{\bbX} \Lotimes \Sigma^{-k}G_{\bbX}^{\vee},
X_{s+1}).
\] Set $f(1,A,B) = \max\{t_k\mid k \in [A,B]\}$, then \eqref{eq:claim}
holds  for all $s>f(1,A,B)$ and $\F \in \sgen{G_{\bbX}}{1}{A}{B}$.

We will proceed by induction on $N$, proving the following claim: for
any $N \ge 1$ and any $A\le B$ there exists an integer $f(N,A,B)$ such
that for all $s>f(N,A,B)$  and  $\F \in
\sgen{G_{\bbX}}{N}{A}{B}$ there is an isomorphism
\[ \Hom(G_{\bbX}, \F \Lotimes X_s) \iso \Hom(G_{\bbX}, \F \Lotimes
  X_{s+1}).\] This will then imply the existence of
$s(0):=f(N_0,A_0,B_0)$.

We have just checked that the claim holds for $N=1$ and arbitrary
$A\le B$. Assume the claim has been proven for $N \ge 1$ and let $\F
\in \sgen{G_{\bbX}}{N+1}{A}{B}$, i.e., $\F$ is a retract of an object
$\F'$ which fits in a triangle
\[ \E \longrightarrow \F' \longrightarrow \G \longrightarrow \Sigma\E
\] with $\E \in \sgen{G_{\bbX}}{1}{A}{B}$ and $\G \in
\sgen{G_{\bbX}}{N}{A}{B}$. We thus obtain a morphism of exact
sequences
\[ \resizebox{\textwidth}{!}{ \xymatrix{ \Hom(G_{\bbX}, \Sigma^{-1}\G
\Lotimes X_s) \ar[r] \ar[d]_{\alpha_1} & \Hom(G_{\bbX}, \E \Lotimes
X_s) \ar[r] \ar[d]_{\alpha_2} & \Hom(G_{\bbX}, \F' \Lotimes X_s)
\ar[r] \ar[d]_{\alpha_3} & \Hom(G_{\bbX}, \G \Lotimes X_s) \ar[r]
\ar[d]_{\alpha_4} & \Hom(G_{\bbX}, \Sigma\E \Lotimes X_s)
\ar[d]_{\alpha_5} \\ \Hom(G_{\bbX}, \Sigma^{-1}\G \Lotimes X_{s+1})
\ar[r] & \Hom(G_{\bbX}, \E \Lotimes X_{s+1}) \ar[r] & \Hom(G_{\bbX},
\F' \Lotimes X_{s+1}) \ar[r] & \Hom(G_{\bbX}, \G \Lotimes X_{s+1})
\ar[r] & \Hom(G_{\bbX}, \Sigma\E \Lotimes X_{s+1}).  }}
\] Note that $\Sigma^{-1}\G \in \sgen{G_{\bbX}}{N}{A-1}{B-1}$ and
$\Sigma\E \in \sgen{G_{\bbX}}{1}{A+1}{B+1}$. Therefore, if we set
\[ f(N+1,A,B) = \max\{f(1,A,B),f(1,A+1,B+1),f(N,A,B),f(N,A-1,B-1)\},
\] then the morphisms $\alpha_1, \alpha_2, \alpha_4, \alpha_5$ in the
above diagram are isomorphisms, hence so is $\alpha_3$ by the
five lemma. Consequently, $f(N+1,A,B)$ has the desired properties, and
we conclude by induction.
\end{proof}

\begin{lem}\label{lem:coherent}
  If $X \in \bfD(\Qcoh\bbX)$ is the
homotopy colimit of a Cauchy sequence $(X_s)_s$ in $\bfD^\per(\bbX)$, then $X$
has coherent cohomology sheaves.
\end{lem}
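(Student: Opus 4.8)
The plan is to check coherence locally, reducing the statement to the affine situation already understood. Since $\bbX$ is noetherian it admits a finite cover by affine open subschemes, and a quasi-coherent sheaf is coherent precisely when its restriction to each member of such a cover is coherent. Thus it suffices to fix an open immersion $i\colon\bbU\to\bbX$ with $\bbU=\Spec R$ affine, so that $R$ is noetherian and in particular right coherent, and to prove that every cohomology sheaf of $i^*X$ is coherent.

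First I would transport the hypotheses along $i^*$. By Lemma~\ref{lem:pullback} the sequence $(i^*X_s)_s$ is again a Cauchy sequence, now in $\bfD^\per(\bbU)=\bfD^\per(R)$. Moreover $i^*$ preserves homotopy colimits, being the localisation functor recalled in the proof of Lemma~\ref{lem:pullback}, so $i^*X\iso\hocolim_s i^*X_s$; and since $i^*$ is exact it commutes with the formation of cohomology sheaves, giving $H^n(i^*X)\cong i^*H^n(X)$ for every $n$.

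Next I would run the affine computation. For a fixed $n$, apply the Cauchy condition to the compact object $C=\Si^{-n}R$: the definition of a Cauchy sequence yields an integer $s_n$ with $H^n(i^*X_s)\iso H^n(i^*X_{s+1})$ for all $s\ge s_n$, since $\Hom(\Si^{-n}R,M)\cong H^n(M)$ for $M\in\bfD(R)$. On the other hand, Lemma~\ref{le:hocolim} applied to the compact object $\Si^{-n}R$ gives $H^n(i^*X)\cong\colim_s H^n(i^*X_s)$, so this colimit is attained already at $s=s_n$. As each $i^*X_s$ is a perfect complex over the noetherian ring $R$, every $H^n(i^*X_s)$ is a finitely generated $R$-module; hence $H^n(i^*X)$ is finitely generated, that is, coherent. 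Since $n$ was arbitrary and the affine opens cover $\bbX$, all the quasi-coherent cohomology sheaves $H^n(X)$ are coherent.

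The genuine content sits in Lemma~\ref{lem:pullback}: the only delicate point is that the Cauchy property survives restriction to an open subscheme, which in the non-affine setting rests on Neeman's approximability results. Granting that input, the argument above is a routine localisation together with the observation that, over an affine noetherian base, the Cauchy condition tested against $R$ forces the cohomology modules to stabilise and hence to be finitely generated. I would stress that no boundedness hypothesis enters here, since coherence of each individual cohomology sheaf is a purely finiteness statement; this is why the lemma is strictly weaker than, and serves as a stepping stone toward, the full intrinsic description in Theorem~\ref{thm:intrinsic}. In particular I deliberately avoid invoking the affine equivalence of Lemma~\ref{le:coherent-perf-cohom}, which would require the additional boundedness condition that $i^*X$ need not satisfy.
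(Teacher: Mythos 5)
Your proposal is correct and follows essentially the same route as the paper's own proof: restrict to an affine open via Lemma~\ref{lem:pullback}, test the Cauchy condition against shifts of the structure sheaf so that the cohomology modules stabilise, and conclude coherence from finite generation over a noetherian ring plus locality of coherence. The only difference is that you spell out details the paper leaves implicit (that $i^*$ preserves homotopy colimits and the use of Lemma~\ref{le:hocolim} to identify $H^n(i^*X)$ with the stabilised colimit), which is harmless.
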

\begin{proof} Suppose $\bbU$ is an open affine subscheme of ${\bbX}$
  and denote by $i\colon\bbU \to {\bbX}$ the corresponding
  inclusion. By Lemma~\ref{lem:pullback}, $(i^*X_s)_s$ is a Cauchy
  sequence in $\bfD^\per(\bbU)$, so for any $n$ the sequence
\[ (H^n(i^*X_s))_s \cong (\Hom(\Sigma^{-n}\cO_{\bbU},X_s))_s
\] stabilises for $s \gg 0$, where the isomorphism is because $\bbU$
is affine. It follows that the homotopy colimit $i^*X$ of $(i^*X_s)_s$
has coherent cohomology. Since coherence is a local property, the
cohomology sheaves of $X$ are coherent.
\end{proof}

Next we identify the objects having bounded cohomology,
using the following criterion. We are grateful to Amnon Neeman for
suggesting a proof.

\begin{lem}\label{le:scheme-bounded} An object $X$ in
$\bfD(\Qcoh\bbX)$ belongs to $\bfD^b(\Qcoh\bbX)$ if and only if for
all compact $C$ we have $\Hom(C,\Si^nX)=0$ for $|n|\gg 0$.
\end{lem}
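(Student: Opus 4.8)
The plan is to prove the two implications separately, handling the forward direction by duality and the harder converse by reducing to the affine case through Neeman's approximability, in close parallel with the proof of Lemma~\ref{lem:pullback}.

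For the forward implication, suppose $X\in\bfD^b(\Qcoh\bbX)$ and let $C$ be compact, i.e.\ perfect, hence dualisable. Then $\RHOM(C,X)\simeq C^\vee\Lotimes X$ has bounded cohomology (a perfect complex has finite Tor-amplitude), and $\Hom(C,\Si^nX)\cong H^n\bfR\Ga(\bbX,C^\vee\Lotimes X)$. Since $\bbX$ is separated and quasi-compact, a finite affine cover $\bbX=\bbU_1\cup\dots\cup\bbU_r$ has all intersections affine, so the \v{C}ech complex computes $H^*(\bbX,-)$ on quasi-coherent sheaves and vanishes above degree $r-1$; thus $H^p(\bbX,-)=0$ for $p\ge r$. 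The hypercohomology spectral sequence $H^p(\bbX,\mathcal H^qY)\Rightarrow H^{p+q}\bfR\Ga(\bbX,Y)$ applied to $Y=C^\vee\Lotimes X$ (with $q$ bounded and $0\le p\le r-1$) then forces $\Hom(C,\Si^nX)=0$ for $|n|\gg 0$ in both directions. Note that finite Krull dimension is never used; only the finite \v{C}ech length matters.

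For the converse it suffices to show that each restriction $i_\alpha^*X$ along an affine open $i_\alpha\colon\bbU_\alpha=\Spec R_\alpha\hookrightarrow\bbX$ of the finite cover has bounded cohomology, since the cohomology sheaves of $X$ are determined locally and the cover is finite. As $\bbU_\alpha$ is affine, $H^n(i_\alpha^*X)\cong\Hom_{\bbU_\alpha}(\cO_{\bbU_\alpha},\Si^n i_\alpha^*X)$, and by adjunction $i_\alpha^*\dashv\bfR(i_\alpha)_*$ together with the identity \eqref{eq:smashing} this equals $\Hom_\bbX(\cO_\bbX,\Si^n(\F_\alpha\Lotimes X))$, where $\F_\alpha=\bfR(i_\alpha)_*\cO_{\bbU_\alpha}$. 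So the whole problem reduces to proving $\Hom_\bbX(\cO_\bbX,\Si^n(\F_\alpha\Lotimes X))=0$ for $|n|\gg 0$; in particular only the single compact object $\cO_\bbX$ is needed, and no appeal to Thomason--Trobaugh extension is required. Following Lemma~\ref{lem:pullback} I invoke Neeman's strong generation \cite[Thm.~0.18]{Ne2017}: since $\cO_{\bbU_\alpha}$ is perfect, $\F_\alpha\in\sgen{G_\bbX}{N_0}{A_0}{B_0}$ for suitable $N_0,A_0,B_0$. I then prove by induction on $N$ that for every $\Phi\in\sgen{G_\bbX}{N}{A}{B}$ one has $\Hom_\bbX(\cO_\bbX,\Si^n(\Phi\Lotimes X))=0$ once $|n|$ exceeds a bound depending only on $N,A,B$ (and $X$). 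The base case $N=1$ uses dualisability of $G_\bbX$: for a shift $\Si^kG_\bbX$ one has $\Hom_\bbX(\cO_\bbX,\Si^n(\Si^kG_\bbX\Lotimes X))\cong\Hom_\bbX(G_\bbX^\vee,\Si^{n+k}X)$, which vanishes for $|n+k|\gg 0$ by the hypothesis applied to the compact object $G_\bbX^\vee$; since $k$ ranges only over the bounded interval $[A,B]$, a single bound on $|n|$ annihilates every summand at once, so the vanishing survives the arbitrary direct sums and retracts in $\sgen{G_\bbX}{1}{A}{B}$. The inductive step writes $\Phi$ as a retract of an extension of an object of $\sgen{G_\bbX}{1}{A}{B}$ by one of $\sgen{G_\bbX}{N}{A}{B}$, tensors with $X$, and runs the five lemma across the resulting morphism of long exact sequences, with bound the maximum of the four relevant bounds (ranges shifted by $\pm1$), exactly mirroring $f(N+1,A,B)$ there. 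Feeding $\Phi=\F_\alpha$ in completes the reduction, and the affine case finishes the proof.

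The main obstacle is precisely this passage from the global hypothesis to local boundedness in the non-affine case: a priori the vanishing of $\Hom(C,\Si^nX)$ constrains only hypercohomology, not the cohomology sheaves. The device that overcomes it is Neeman's approximability, which expresses $\bfR(i_\alpha)_*\cO_{\bbU_\alpha}$ through $G_\bbX$ using shifts confined to a \emph{bounded} window $[A_0,B_0]$; this boundedness is exactly what makes the per-summand vanishing bounds uniform, and hence what lets them survive the infinite direct sums that are unavoidable when building a non-compact object such as $\bfR(i_\alpha)_*\cO_{\bbU_\alpha}$.
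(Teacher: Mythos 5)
Your proof is correct, but it takes a genuinely different route from the one in the paper (which was supplied by Neeman). The paper's argument is global and splits the problem into the two tails: vanishing of $\Hom(G_\bbX,\Si^nX)$ for $n\gg 0$ puts $X$ in $\bfD^-(\Qcoh\bbX)$ by Lipman--Neeman \cite[Thm.~4.2]{LN2007}, while vanishing for $n\ll 0$ puts $X$ in $\bfD^+(\Qcoh\bbX)$ because the t-structure generated by $G_\bbX$ is \emph{equivalent} to the standard one by \cite[Ex.~3.6]{Ne2018}. You instead localise: bounded cohomology is detected on a finite affine cover; on an affine piece it is detected by $\Hom(\cO_{\bbU_\alpha},\Si^n-)$; and adjunction plus the projection formula \eqref{eq:smashing} convert this into the single global condition $\Hom_\bbX(\cO_\bbX,\Si^n(\bfR(i_\alpha)_*\cO_{\bbU_\alpha}\Lotimes X))=0$ for $|n|\gg 0$, which you establish by the same $\sgen{G_\bbX}{N}{A}{B}$-induction with uniform bounds that drives Lemma~\ref{lem:pullback}, the key input again being Neeman's \cite[Thm.~0.18]{Ne2017}; compactness of $\cO_\bbX$ is what lets the uniform bound pass through the infinite coproducts, exactly as you say. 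So both routes rest on Neeman's approximability circle of ideas, but on different theorems from it: the paper's proof is shorter modulo its two citations, whereas yours recycles precisely the machinery already set up in this appendix, treats both tails $n\to\pm\infty$ symmetrically in one induction, and makes visible that the converse only needs the hypothesis for the single compact object $G_\bbX^\vee$. Two minor remarks: in your inductive step the five lemma is overkill --- for vanishing it suffices that the middle term of the long exact sequence is flanked by two zeros, so no shifting of the window $[A,B]$ is needed; and your forward direction (\v{C}ech resolution plus the hypercohomology spectral sequence) is an unpacking of the standard reference the paper cites, not a new argument.
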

\begin{proof}[Proof \emph{(Neeman)}]
  Fix a compact generator $G=G_\bbX$ and an
  object $X$ in $\bfD(\Qcoh\bbX)$.

  If $X$ is in $\bfD^b(\Qcoh\bbX)$, then the fact that
$\Hom(G,\Si^nX)=0$ for $|n|\gg 0$ is standard; see
\cite[\href{http://stacks.math.columbia.edu/tag/09M2}{\S09M2}]{stacks-project}.

If $\Hom(G,\Si^nX)=0$ for $n\gg 0$, then $X$ is in
$\bfD^-(\Qcoh\bbX)$, by \cite[Thm.~4.2]{LN2007}.

Set $\T:=\bfD(\Qcoh\bbX)$. The object $G$ generates a t-structure by
setting for $n\in\bbZ$
\begin{align*} \T_G^{>n}&:=\{X\in\T\mid \Hom(G,\Si^i X)=0\textrm{ for
all } i\le n\} \intertext{and} \T_G^{\le n}&:=\{X\in\T\mid
\Hom(X,\T_G^{>n})=0\}.
\end{align*} It follows from \cite[Ex.~3.6]{Ne2018} that this
t-structure is \emph{equivalent} to the standard t-structure on $\T$
in the following sense: there exist $p\ge q$ in $\bbZ$ such that
\[\T_G^{>p}\subseteq \T^{>0}\subseteq \T_G^{>q}.\] Thus
$\Hom(G,\Si^nX)=0$ for $n\ll 0$ implies
\[X\in \bigcup_{n\in\bbZ}\T_G^{>n}=
\bigcup_{n\in\bbZ}\T^{>n}=\bfD^+(\Qcoh\bbX).\] Note that the proof in
\cite{Ne2018} requires the scheme to be quasi-compact and separated.
\end{proof}

\begin{proof}[Proof of Theorem~\ref{thm:intrinsic}] First assume that
  $X$ belongs to $\bfD^b(\coh\bbX)$.  Then the implication (1)
  $\Rightarrow$ (3) of Proposition~\ref{pr:coherent-cohom} shows that
  $X$ can be written as the homotopy colimit of a Cauchy sequence in
  $\bfD^\per(\bbX)$, keeping in mind the comments about
  pseudo-coherent objects before Proposition~\ref{pr:coherent-cohom}.
  Condition~(2) follows from Lemma~\ref{le:scheme-bounded}.

  Conversely, suppose $X$ is the homotopy colimit of a Cauchy sequence
  in $\bfD^\per(\bbX)$ satisfying Condition~(2). It follows from
  Lemma~\ref{le:scheme-bounded} that $X$ has bounded cohomology, while
  Lemma~\ref{lem:coherent} guarantees that $X$ has coherent cohomology
  sheaves, hence $X$ belongs to $\bfD^b(\coh\bbX)$.
\end{proof}

\section{Morphic enhancements and triangle completions\\[.5em]
  by Bernhard Keller}

For a category $\C$, we denote by $\M\C$ the category of morphisms
$f\colon X_1 \to X_0$ of $\C$. Let $\A$ be an abelian category and $\D\A$ its
derived category. We have the cone functor
\[
\D\M\A \to \D\A 
\]
taking a morphism of complexes $f$ to its mapping cone. It allows
us to capture the notion of standard triangle and of coherent morphism
between standard triangles. In this appendix,
we recall the axiomatization of the links between $\D\M\A$ and $\D\A$
given in Section~6 of \cite{Ke1991} and apply it to the construction of
triangulated completions of triangulated categories. Our treatment
is slightly different from that of \cite{Ke1991} because we work with
triangulated categories instead of suspended categories.
      
\subsection{Morphic enhancements} 
\label{ss:epivalence-and-recollement}
Let $\T$ and $\T_1$ be triangulated
categories and 
\[
Q_0\colon \T\to\T_1
\]
a fully faithful triangle functor admitting a
left adjoint $P_0$ and a right adjoint $P_1$. We define an additive functor
\[
M\colon \T_1 \to \M\T\; , \; X \mapsto MX=(\alpha X\colon P_1 X \to P_0 X)
\]
by requiring that $Q_0 \alpha X$ equals the composition of the adjunction morphisms
\[
Q_0 P_1 X \to X \to Q_0 P_0 X.
\]
For example, with the above notations, we can take $\T=\D\A$ and $\T_1=\D\M\A$.
We identify the objects of $\D\M\A$ with morphisms of complexes. With this
convention, the functor $Q_0$ takes a complex $X$ to its identity morphism and
the functors $P_i$ take a morphism of complexes $f\colon X_1 \to X_0$ to 
$X_i$, $i=0,1$. Then the functor $M\colon \D\M\A \to \M\D\A$ takes 
a morphism of complexes $f\colon X_1 \to X_0$ to its image in the category
of morphisms of the derived category $\D\A$ (each morphism of complexes
yields a morphism in the derived category).

Recall that the triple $(P_0, Q_0, P_1)$ yields a canonical recollement
\cite{BBD1982}
\begin{equation} \label{recollement1}
\begin{tikzcd}
  \T\arrow{rr}[description]{Q_0} &&\T_1
  \arrow[yshift=-1.5ex]{ll}{P_1}
  \arrow[yshift=1.5ex]{ll}[swap]{P_0}
  \arrow{rr}[description]{P_2'} &&\T_1/\T\,.
  \arrow[yshift=-1.5ex]{ll}{Q_2'}
  \arrow[yshift=1.5ex]{ll}[swap]{Q_1'}
\end{tikzcd}
\end{equation}
Recall that a functor is an \emph{epivalence} if it is conservative
(i.e.~it detects isomorphisms), full and essentially surjective.

\begin{thm} \label{thm:epi-rec} The following are equivalent:
\begin{itemize}
\item[(i)] The functor $M\colon \T_1 \to \M\T$ is an epivalence.
\item[(ii)] In the above recollement, we have $\Ker P_0\subseteq (\Ker P_1)^\perp$
and the composition $P_1 Q_1'$ is an equivalence.
\end{itemize}
\end{thm}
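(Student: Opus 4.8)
The plan is to translate both parts of (i) into the recollement language and match them with (ii), writing $\Phi:=P_1Q_1'$. First I would record the standard dictionary for $(P_0,Q_0,P_1)$ and its quotient $(Q_1',P_2',Q_2')$: from the two functorial triangles $Q_1'P_2'X\to X\xto{u_X}Q_0P_0X$ and $Q_0P_1X\xto{c_X}X\to Q_2'P_2'X$ (unit $u$ of $P_0\dashv Q_0$, counit $c$ of $Q_0\dashv P_1$) one reads off $\Ker P_0=\Im Q_1'$, $\Ker P_1=\Im Q_2'$, the identities $P_2'Q_1'\iso\Id\iso P_2'Q_2'$ and $P_0Q_1'=0=P_1Q_2'$, and, applying the second triangle to $Q_1'V$, the gluing triangle $Q_0\Phi V\to Q_1'V\to Q_2'V\to\Si Q_0\Phi V$. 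Applying $P_0$ to it gives $P_0Q_2'\cong\Si\Phi$, so $\Phi$ is an equivalence iff $P_0Q_2'$ is. I would also note that $\alpha$ is the natural transformation $P_1\Rightarrow P_0$ with $Q_0\alpha=u\circ c$, that $\Hom(MX,MY)$ is the pullback of $\Hom(P_1X,P_1Y)\to\Hom(P_1X,P_0Y)\leftarrow\Hom(P_0X,P_0Y)$, and that the comparison map sends $\phi$ to $(P_1\phi,P_0\phi)=(\phi c_X,u_Y\phi)$ after adjunction.

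Next I would dispose of conservativity. Since $P_0,P_1$ are triangle functors, $M\phi$ is invertible iff $\Cone\phi\in\Ker P_0\cap\Ker P_1$, so \emph{$M$ is conservative iff $\Ker P_0\cap\Ker P_1=0$}. Reading (ii) as $\Hom(\Ker P_1,\Ker P_0)=\Hom(\Im Q_2',\Im Q_1')=0$, this orthogonality forces the intersection to vanish (an object in both kernels has $\id=0$), hence implies conservativity. For the converse I would extract orthogonality from (i): given $\psi\colon Q_2'W\to Q_1'V$, form $X=\Cone\psi$ with structure map $p\colon Q_1'V\to X$ and show $\alpha X=0$, because the gluing triangles make $c_X$ factor through $p$ and $u_X$ through $X\to\Si Q_2'W$, whose composite with $p$ is zero. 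Thus $MX\cong M(Q_1'V\oplus\Si Q_2'W)$ with $\alpha=0$; the obvious retraction of $Mp$ in $\M\T$ lifts by fullness to some $\rho\colon X\to Q_1'V$ with $M(\rho p)=\id$, and conservativity upgrades $\rho p$ to an isomorphism, so $p$ is a split monomorphism and $\psi=0$, giving the orthogonality in (ii).

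Then I would analyse $\Phi$. Applying $\Hom(-,Q_1'V)$ to the gluing triangle for $W$ and using $P_0Q_1'=0$ together with orthogonality (which kills $\Hom(Q_2'W,\Si^nQ_1'V)$ for all $n$, as $\Im Q_1'$ is a triangulated subcategory) collapses the long exact sequence to an isomorphism $\Hom(W,V)\iso\Hom(\Phi W,\Phi V)$; identifying it with the action of $\Phi$ shows \emph{orthogonality $\Rightarrow$ $\Phi$ fully faithful}. Essential surjectivity of $M$ makes $\Phi$ essentially surjective, since realizing $(A\to 0)$ as $MX$ forces $X\in\Ker P_0=\Im Q_1'$ and $A\cong\Phi(P_2'X)$. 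Hence (i) gives orthogonality (so $\Phi$ fully faithful) and $\Phi$ essentially surjective, i.e.\ $\Phi$ an equivalence: this is (ii). Conversely (ii) supplies orthogonality and the equivalence $\Phi$ outright, and with them conservativity.

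It remains, for (ii)$\Rightarrow$(i), to prove essential surjectivity and fullness of $M$. For essential surjectivity I would, given $f\colon A\to B$, use that $\Phi$ and $P_0Q_2'\cong\Si\Phi$ are equivalences to pick $V,W$ with $\Phi V\cong A$ and $P_0Q_2'W\cong B$, and realize $f$ as $\alpha X$ for $X$ fitting in a triangle $Q_2'W\to X\to Q_1'V\xto{h}\Si Q_2'W$ with $P_1X\cong\Phi V$ and $P_0X\cong P_0Q_2'W$, checking that $h\mapsto\alpha X$ corresponds under $\Hom(Q_1'V,\Si Q_2'W)\cong\Hom(V,\Si W)$ and $\Hom(A,B)\cong\Hom(\Phi V,\Si\Phi W)$ to the bijection induced by $\Phi$. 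For fullness I would take a pair $(\tilde\theta_1,\tilde\theta_0)$ in the pullback description of $\Hom(MX,MY)$ and lift it to $\phi\colon X\to Y$ with $\phi c_X=\tilde\theta_1$ and $u_Y\phi=\tilde\theta_0$, solving the two lifting problems against the triangles of $c_X$ and $u_Y$ simultaneously; the obstruction and the indeterminacy of such a lift lie in cross terms $\Hom(Q_2'P_2'X,\Si^nQ_1'P_2'Y)=\Hom(\Im Q_2',\Si^n\Im Q_1')$, which vanish by orthogonality. I expect this simultaneous lifting — reconciling the two conditions on $\phi$ while controlling the obstruction groups — to be the main obstacle, together with the naturality bookkeeping identifying the comparison maps (such as $h\mapsto\alpha X$ and $\Hom(W,V)\to\Hom(\Phi W,\Phi V)$) with the honest action of $\Phi$.
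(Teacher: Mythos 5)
Your proposal is correct, and in the crucial direction (ii)$\Rightarrow$(i) it takes a genuinely different route from the paper. The paper first builds the right adjoint $Q_1=Q_1'(P_1Q_1')^{-1}$ of $P_1$, then for each $X$ forms the functorial triangle $X \to Q_0P_0X\oplus Q_1P_1X \to Q_1P_0X \to \Sigma X$; applying $\T_1(Z,?)$ to it yields the exact sequence
\[
\T(\Sigma P_1 Z, P_0 X) \to \T_1(Z,X) \to \M\T(MZ,MX) \to 0,
\]
which gives fullness at once, shows that $\Ker M$ is an ideal of square zero (whence conservativity), and essential surjectivity is then obtained by lifting a componentwise split exact sequence of $\M\T$ through the already-full $M$. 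You instead read off conservativity directly from orthogonality (via $\Ker P_0\cap\Ker P_1=0$), prove fullness by a two-stage lifting argument whose obstruction groups are cross terms $\Hom(\Im Q_2',\Sigma^n\,\Im Q_1')$ (this does work, and notably uses only the orthogonality half of (ii)), and realize objects of $\M\T$ as cones $Q_2'W\to X\to Q_1'V\xto{h}\Sigma Q_2'W$, which is where the equivalence $\Phi=P_1Q_1'$ enters. In the direction (i)$\Rightarrow$(ii) your cone-plus-retraction argument for orthogonality matches the paper's in spirit, but you obtain the equivalence $\Phi$ differently: full faithfulness from orthogonality via a long exact sequence, essential surjectivity from that of $M$, whereas the paper constructs a right adjoint $P_{1\rho}$ of $P_1$ and compares images. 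What the paper's route buys is reusable structure: the exact sequence and the square-zero kernel become part c) of Proposition~\ref{prop:properties} and are used again in Theorem~\ref{th:morphic-completion}. What your route buys is economy and a sharper bookkeeping of which hypothesis is responsible for which conclusion.

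One caveat, on the step you yourself flag: the identification $h\mapsto\alpha X$ in your essential surjectivity argument is more than ``naturality bookkeeping''. Naturality of $\alpha$ applied to the defining triangle of $X$ gives no information at all, since $P_0p=0$ and $P_1j=0$ make both naturality squares vanish identically; this is exactly why the paper's construction is easier to check, as there the middle term $Q_1P_1Y\oplus Q_0P_0Y$ has nonzero $\alpha$ and naturality pins down $\alpha$ of the cone in one line. To complete your version one has to factor $h=Q_2'(\tilde h)\circ v$ through the unit $v\colon Q_1'V\to Q_2'V$ and invoke the octahedral axiom: this exhibits the gluing triangle $Q_0\Phi V\to X\to Q_2'F\to\Sigma Q_0\Phi V$ with $F$ the cocone of $\tilde h$, identifies $\alpha X$ (up to the canonical isomorphisms, using $\alpha X = P_0(c_X)\circ\mathrm{counit}^{-1}$) with $P_0$ of the first map of that triangle, and then a comparison of two triangles over the same morphism $\Sigma\Phi(\tilde j)$ shows $\alpha X$ agrees with $\Phi\tilde h$ only up to precomposition with an automorphism of $\Phi V$. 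That weaker statement still suffices, because an isomorphism in $\M\T$ absorbs such an automorphism; so your outline is completable, but this step genuinely requires the octahedron rather than naturality alone.
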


\begin{defn} The triangle
functor $Q_0\colon \T \to \T_1$ is a \emph{morphic enhancement
of $\T$} if the equivalent conditions of the theorem hold.
\end{defn}

For example, it is easy to check that the functor
$M\colon \D\M\A\to\M\D\A$ is an epivalence if $\A$ is an abelian
category. For more examples, we refer to Section~\ref{ss:examples}
below.

\begin{proof}[Proof of Theorem~\ref{thm:epi-rec}]
We prove the implication from (i) to (ii). We start by constructing
a right adjoint $P_{1\rho}$ of $P_1$. Let $Y$ be an object of $\T$. Since $M$
is an epivalence, we can find an object $P_{1\rho}Y$ and an isomorphism
in $\M\T$
\[
\xymatrix{
P_1 P_{1\rho} Y \ar[r] \ar[d]_{\phi Y} & P_0 P_{1\rho} Y \ar[d]\\
Y \ar[r] & \;0.}
\]
For $X$ in $\T_1$, consider the map
\[
\T_1(X,P_{1\rho} Y) \to \T(P_1X,Y)
\]
taking $f$ to $(\phi Y)(P_1 f)$. Since $M$ is full, it is surjective. Suppose that 
$f\colon X \to P_{1\rho} Y$ is in the kernel. Form a triangle
\[
\xymatrix{
X \ar[r]^-f & P_{1\rho} Y \ar[r]^-g & Z \ar[r] & \Sigma X.}
\]
Its image under $M$ is the morphism of triangles
\[
\xymatrix{
P_1 X \ar[r]^-{0} \ar[d] & P_1 P_{1\rho} Y \ar[r]^-{P_1 g} \ar[d] 
& P_1 Z \ar[r] \ar[d] & \Sigma P_1 X \ar[d] \\
P_0 X \ar[r] & 0 \ar[r] & P_0 Z \ar[r] & \Sigma P_0 X.}
\]
This shows that $Mg$ admits a retraction in $\M\T$. Since $M$ is full and
conservative, this implies that $g$ admits a retraction and $f$ vanishes.
Thus we have the right adjoint $P_{1\rho}$ of $P_1$. Since $P_1$ is a
localization functor, $P_{1\rho}$ is fully faithful. By construction, its image is
$\Ker P_0$. Therefore, the functors $P_1$ and $P_{1\rho}$ induce
quasi-inverse equivalences between $\T$ and $\Ker P_0$. We also
know that $Q_1'$ and $P_2'$ induce quasi-inverse equivalences between
$\T_1/\T$ and $\Ker P_0$. Thus, the functor $P_1 Q_1'$ is an equivalence.
Now let $X$ be in $\Ker P_1$, $Y$ in $\Ker P_0$ and let $f\colon X\to Y$ be
a morphism. Form a triangle
\[
\xymatrix{X \ar[r]^f & Y \ar[r]^g & Z \ar[r] & \Sigma X.}
\]
Its image under $M$ is the morphism of triangles
\[
\xymatrix{0 \ar[r] \ar[d] & P_1 Y \ar[d] \ar[r] & P_1 Z \ar[d] \ar[r] & \Sigma 0 \ar[d] \\
P_0 X \ar[r] & 0 \ar[r] & P_0 Z \ar[r] & \Sigma P_0 X.}
\]
It follows that $Mg$ admits a retraction. Since $M$ is full and conservative,
$g$ admits a retraction and $f$ vanishes.

We prove the implication from (ii) to (i). Let $X$ be in $\T_1$ and $Y$ in $\T$.
We have
\[
\T(P_1 X,Y) \iso \T(P_1 X, P_1 Q_1' (P_1 Q_1')^{-1} Y) \iso 
\T(Q_0 P_1 X, Q_1' (P_1Q'_1)^{-1}Y).
\]
We have the triangle
\[
\xymatrix{
Q_0 P_1 X \ar[r] & X \ar[r] & Q_2' P_2' X \ar[r] & \Sigma Q_0 P_1 X.}
\]
Since there are no nonzero morphisms from $\Ker P_1 = \Im Q_2'$ to
$\Ker P_0=\Im Q_1'$, it follows that we have a bijection
\[
\T(P_1 X,Y) \iso \T(X, Q_1' (P_1 Q_1')^{-1} Y).
\]
Thus, the functor $P_1$ admits the right adjoint $Q_1=Q_1' (P_1 Q_1')^{-1}$.
Since $P_1$ is a localization functor, $Q_1$ is fully faithful. We have
$\Im Q_1 =\Im Q_1' =\Ker P_0$. We prepare for the proof of the fullness of $M$.
Let $X$ be in $\T_1$. We form the triangle
\begin{equation} \label{eq:inj-res}
\xymatrix{
X \ar[r] & Q_0 P_0 X \oplus Q_1 P_1 X \ar[r] & Y \ar[r] & \Sigma X
}
\end{equation}
over the morphism whose components are the adjunction morphisms.
The adjunctions yield a canonical isomorphism
\[
P_1 Q_0 P_0 X \oplus P_1 Q_1 P_1 X \iso  P_0 X \oplus P_1 X.
\]
Using this we see that the image of the triangle (\ref{eq:inj-res}) under $M$ is
the morphism of triangles
\[
\xymatrix{
P_1 X \ar[r]^-{\smatrix{\alpha X\\ \id}} \ar[d]_{\alpha X} &
P_0 X \oplus P_1 X \ar[d]^{\smatrix{\id&0}} \ar[r]^-{\smatrix{\beta&\gamma}} &
P_1 Y \ar[d] \ar[r] &
\Sigma P_1 X \ar[d]\\
P_0 X \ar[r]^\id & P_0 X \ar[r] & 0 \ar[r] & \Sigma P_0 X.
}
\]
In particular, $Y$ belongs to $\Ker P_0$ and
$\beta\colon P_0 X \iso P_1 Y$ yields a canonical isomorphism
$Q_1 P_0 X \iso Y$. Thus the triangle (\ref{eq:inj-res}) is isomorphic
to a functorial triangle
\begin{equation} \label{eq:functorial-triangle}
\xymatrix{
X \ar[r] & Q_0 P_0 X \oplus Q_1 P_1 X \ar[r] & Q_1 P_0 X \ar[r] & \Sigma X.
}
\end{equation}
Let $Z$ be in $\T_1$. If we apply $\T_1(Z,?)$ to this triangle and use the
adjunctions, we obtain a bifunctorial exact sequence
\[
\xymatrix{
\T(\Sigma P_1 Z, P_0 X) \ar[r] & \T_1(Z,X) \ar[r] & \M\T(MZ,MX) \ar[r] & 0.
}
\]
This shows in particular that $M\colon \T_1 \to \M\T_0$ is full. We claim that its
kernel is an ideal of square $0$. Indeed, suppose that $g\colon X \to Y$ and 
$f\colon Y\to Z$
belong to the kernel. Consider the morphisms of triangles
\[
\xymatrix{
\Sigma^{-1} Q_1 P_0 X \ar[r] \ar[d]^0 & X \ar[d]^g \ar[r] & 
                           Q_0 P_0 X \oplus Q_1 P_1 X \ar[d]^0 \ar[r] & Q_1 P_0 X \ar[d] \\
\Sigma^{-1} Q_1 P_0 Y \ar[r] \ar[d]^0 & Y \ar[d]^f \ar[r] &
	 		Q_0 P_0 Y \oplus Q_1 P_1 Y \ar[d]^0 \ar[r] & Q_1 P_0 Y \ar[d]\\
\Sigma^{-1} Q_1 P_0 Z \ar[r] & Z \ar[r] &
			Q_0 P_0 Z \oplus Q_1 P_1 Z \ar[r] & Q_1 P_0 Z.
}
\]
Clearly $g$ factors through $\Sigma^{-1}Q_1 P_0 Y \to Y$ and $f$
factors through 
\[
Y \to Q_0 P_0 Y\oplus Q_1 P_1 Y.
\]
It follows that $fg$ vanishes. Since $M$ is full, it follows that it is conservative.
Let us show that it is essentially surjective. Let $f\colon X_1 \to X_0$ be an
object of $\M\T$ and consider the componentwise split short exact
sequence of $\M\T$
\[
\xymatrix{
0 \ar[r] & X_1 \ar[d]^f \ar[r]^-{\smatrix{\id\\ f}} & X_1 \oplus X_0
\ar[d]^{\smatrix{0&\id}} \ar[r]^-{\smatrix{-f&\id}} &
X_0 \ar[r] \ar[d] & 0 \\
0 \ar[r] & X_0 \ar[r]^\id & X_0 \ar[r] & 0 \ar[r] & 0.
}
\]
The middle and the right hand term lift respectively to $Q_1 X_1 \oplus Q_0 X_0$
and $Q_1 X_0$. Since $M$ is full, the morphism between the middle and
the right hand term lifts to a morphism $g\colon Q_1 X_1 \oplus Q_0 X_0 \to Q_1 X_0$.
It is easy to see that $f\colon X_1 \to X_0$ is isomorphic to $MY$, where $Y$ is
defined by the triangle
\[
\xymatrix{
\Sigma^{-1} Q_1 X_0 \ar[r] & Y \ar[r] & Q_1 X_1\oplus Q_0 X_0 \ar[r]^-g & Q_1 X_0.}\qedhere
\]
\end{proof}

\subsection{Examples} \label{ss:examples} As we have already seen,
if $\A$ is abelian, then the derived category $\D\M\A$ yields a morphic
enhancement of $\D\A$.

More generally, if $\T$ is an algebraic triangulated
category, i.e.~triangle equivalent to the stable category $\underline{\E}$
of a Frobenius category $\E$, then it admits a morphic enhancement given by the functor
\[
X \mapsto (\id\colon X \to X)
\]
from $\underline{\E}$ to the stable category $\underline{\I\E}$ of the category
$\I\E$ of inflations $X_1 \to X_0$ of $\E$ endowed with the class of short
exact sequences inducing conflations in the two components and in the 
cokernel, cf.\ Example b) of Section~6.1 in \cite{Ke1991}.

More generally,
if $\T$ is the base category of a (strong) stable derivator $\mathbb{D}$
in the sense of \cite{Groth2013},
then the value of $\mathbb{D}$ on the index category $\{0<1\}$ yields
a morphic enhancement of $\T$. Since the homotopy category of each
combinatorial stable model category is the base of a stable derivator
(cf.~\cite{Cisinski2003} \cite{Groth2013}), the triangulated categories
arising commonly in algebra and topology admit morphic enhancements.

More generally, if $\T$ is the base of an epivalent tower of triangulated
categories in the sense of \cite{Ke1991}, then $\T$ has a morphic
enhancement given by the first floor of the tower. 

\subsection{Properties} \label{ss:properties} Let $Q_0\colon \T \to \T_1$ be
a morphic enhancement in the sense of Section~\ref{ss:epivalence-and-recollement}.
Part b) of the following proposition shows that
the underlying additive categories of $\T$ and $\T_1$
and the additive functor $Q_0\colon \T \to \T_1$ determine the triangulated
structure of $\T$.

\begin{prop} \label{prop:properties}
\begin{itemize}
\item[a)] There is an infinite sequence of adjoint functors
\[
\ldots \dashv P_n \dashv Q_n \dashv P_{n+1} \dashv Q_{n+1} \dashv \ldots\ , n\in\mathbb{Z}.
\]
We have natural isomorphisms $\Sigma Q_n \iso Q_{n+3}$ and 
$\Sigma P_n \iso P_{n-3}$ for all integers $n$. Each $Q_n\colon \T \to \T_1$
is a morphic enhancement with associated recollement
\[\begin{tikzcd}
  \T\arrow{rr}[description]{Q_n} &&\T_1
  \arrow[yshift=-1.5ex]{ll}{P_{n+1}}
  \arrow[yshift=1.5ex]{ll}[swap]{P_n}
  \arrow{rr}[description]{P_{n+2}} &&\T\,.
  \arrow[yshift=-1.5ex]{ll}{Q_{n+2}}
  \arrow[yshift=1.5ex]{ll}[swap]{Q_{n+1}}
\end{tikzcd}
\]
\item[b)] We have a canonical isomorphism $\Sigma \iso P_{-1} Q_1$.
For each $X\in \T_1$, there is a functorial triangle
\[
\xymatrix{ 
P_1 X \ar[r]^{\alpha X} & P_0 X \ar[r] & P_{-1} X \ar[r] & \Sigma P_1 X,
}
\]
where the first two morphisms are given by the adjunctions and the third one
is the composition
\[
P_{-1} X \to P_{-2} X \iso \Sigma P_1 X.
\]
Each triangle of $\T$ is isomorphic to a triangle of this form.
\item[c)] The kernel of $M\colon \T_1 \to \M\T$ is an ideal of square zero.
For $X,Y\in\T_1$, there is a bifunctorial exact sequence
\[
\T(\Sigma P_1 X, P_0 Y) \to \T_1(X,Y) \to \M\T(MX,MY) \to 0.
\]
\end{itemize}
\end{prop}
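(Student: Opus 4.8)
The plan is to bootstrap from Theorem~\ref{thm:epi-rec}, whose proof already furnishes the first new adjoint, and then to propagate the adjoints in both directions using the triangulated structure. From the recollement~\eqref{recollement1} I have $P_0\dashv Q_0\dashv P_1$ and $Q_1'\dashv P_2'\dashv Q_2'$, and the proof of Theorem~\ref{thm:epi-rec} produces the fully faithful functor $Q_1:=Q_1'(P_1Q_1')^{-1}$ with $P_1\dashv Q_1$ and $\Im Q_1=\Ker P_0$. Writing $E:=P_1Q_1'$ for the resulting equivalence $\T_1/\T\iso\T$, formal composition of adjoints through $E$ gives $Q_1\dashv P_2\dashv Q_2$ with $P_2:=EP_2'$ and $Q_2:=Q_2'E^{-1}$. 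Thus the five adjunctions $P_0\dashv Q_0\dashv P_1\dashv Q_1\dashv P_2\dashv Q_2$ are available with no extra work.

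For part~b) I would apply the left adjoint $P_0$ to the gluing triangle $Q_0P_1X\to X\to Q_2'P_2'X\to\Sigma Q_0P_1X$ of the recollement. Using the isomorphisms $P_0Q_0\iso\id$ and $P_0Q_1'=0$ this collapses to a functorial triangle $P_1X\xto{\alpha X}P_0X\to P_{-1}X\to\Sigma P_1X$, where I set $P_{-1}:=P_0Q_2'P_2'$; that the first map is $\alpha X$ is a triangle-identity verification. Evaluating this triangle at $X=Q_1Y$ and using $P_1Q_1\iso\id$ together with $P_0Q_1=0$ yields $P_{-1}Q_1\iso\Sigma$, the first claim of b). Every triangle of $\T$ arises this way because $M$ is essentially surjective: a morphism $g$ of $\T$ lifts to an object $X\in\T_1$ with $\alpha X\cong g$, and the functorial triangle for this $X$ realises the cone of $g$.

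The heart of a) is the $3$-periodicity $\Sigma P_n\iso P_{n-3}$ and $\Sigma Q_n\iso Q_{n+3}$. Its core is the natural isomorphism $P_0Q_2'\iso\Sigma P_1Q_1'$, which I would obtain by applying $P_0$ to the triangle $Q_0P_1Q_1'Z\to Q_1'Z\to Q_2'Z\to\Sigma Q_0P_1Q_1'Z$ (the gluing triangle at $Q_1'Z$, using $P_2'Q_1'\iso\id$) together with $P_0Q_0\iso\id$, $P_0Q_1'=0$; this also gives $P_{-1}\iso\Sigma P_2$. I then \emph{define} $P_n:=\Sigma^{-k}P_r$ and $Q_n:=\Sigma^{k}Q_r$ for $n=3k+r$ with $0\le r\le 2$, so that the periodicity holds by construction. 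With this definition every adjunction $P_n\dashv Q_n\dashv P_{n+1}$ is a suspension of one of the five already established, with the sole exception of the wrap-around $Q_2\dashv P_3=\Sigma^{-1}P_0$, which does not close up formally. I would extract it from the other gluing triangle $Q_1'P_2'X\to X\to Q_0P_0X\to\Sigma Q_1'P_2'X$ evaluated at $X=Q_2Y$, which after the substitutions $P_2'Q_2'\iso\id$, $P_0Q_2'\iso\Sigma P_1Q_1'$ and $P_1Q_1\iso\id$ becomes $Q_1Y\to Q_2Y\to Q_0\Sigma Y\to\Sigma Q_1Y$, and combine it with the adjunctions of $Q_0$ and $Q_1$ to identify $\Hom(Q_2Y,X)$ with $\Hom(\Sigma Y,P_0X)\iso\Hom(Y,\Sigma^{-1}P_0X)$. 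Finally, each $Q_n$ is fully faithful with adjoints $P_n,P_{n+1}$, and its associated morphism functor $\T_1\to\M\T$ is the image of $M$ under the corresponding suspension; since being an epivalence is preserved, Theorem~\ref{thm:epi-rec} shows each $Q_n$ is a morphic enhancement, with the recollement displayed in a).

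Part~c) requires essentially no new argument: the fact that $\Ker M$ is a square-zero ideal and the bifunctorial exact sequence $\T(\Sigma P_1X,P_0Y)\to\T_1(X,Y)\to\M\T(MX,MY)\to 0$ were both established inside the proof of Theorem~\ref{thm:epi-rec}, in the passage producing the functorial triangle~\eqref{eq:functorial-triangle}; I would simply record them. The main obstacle is the non-formal step in a): proving the wrap-around adjunction $Q_2\dashv\Sigma^{-1}P_0$ and checking that the periodicity isomorphism $P_0Q_2'\iso\Sigma P_1Q_1'$ is compatible with the adjunction units and counits, so that the suspension-translated functors are genuinely adjoint rather than merely abstractly isomorphic. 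Everything else reduces to bookkeeping with the standard recollement identities $P_0Q_0\iso\id$, $P_1Q_0\iso\id$, $P_2'Q_1'\iso\id$, $P_0Q_1'=0$, $P_1Q_2'=0$ and their suspensions.
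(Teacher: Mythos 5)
Your architecture is genuinely different from the paper's: you define $P_n,Q_n$ for all $n$ by $3$-periodicity and reduce part a) to a single ``wrap-around'' adjunction $Q_2\dashv\Sigma^{-1}P_0$, whereas the paper forms the shifted recollement around $Q_1$ (thereby \emph{constructing} a new functor $Q_{-1}$ left adjoint to $P_0$), verifies criterion (ii) of Theorem~\ref{thm:epi-rec} for $Q_1$, and then proceeds by induction and by self-duality of the hypotheses, deducing the periodicity $\Sigma Q_{-1}\iso Q_2$ only afterwards. Much of your bookkeeping is correct: the five adjunctions $P_0\dashv Q_0\dashv P_1\dashv Q_1\dashv P_2\dashv Q_2$, the isomorphism $P_0Q_2'\iso\Sigma P_1Q_1'$, the triangle $Q_1Y\to Q_2Y\to Q_0\Sigma Y\to\Sigma Q_1Y$, and parts b) and c). But the step you yourself defer as ``the main obstacle'' is precisely where the content of the proposition lies, and the route you sketch for it fails. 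Applying $\T_1(-,X)$ to the triangle $Q_1Y\to Q_2Y\to Q_0\Sigma Y\to\Sigma Q_1Y$ and invoking ``the adjunctions of $Q_0$ and $Q_1$'' can only use, for maps \emph{out of} $Q_0Z$ and $Q_1Z$, the adjunctions $Q_0\dashv P_1$ and $Q_1\dashv P_2$; the resulting exact sequence is
$\T(\Sigma Y,P_2X)\to\T(\Sigma Y,P_1X)\to\T_1(Q_2Y,X)\to\T(Y,P_2X)\to\T(Y,P_1X)$,
in which $P_0$ never appears, and there is no mechanism for identifying the middle term naturally with $\T(\Sigma Y,P_0X)$ (extracting the middle term of a long exact sequence as a functor is exactly the non-functoriality of cones). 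The deeper point is that $Q_2\dashv\Sigma^{-1}P_0$ is \emph{not} a formal consequence of the recollement \eqref{recollement1} plus your five adjunctions: it requires the morphic-enhancement hypothesis, concretely the orthogonality $\Ker P_0\subseteq(\Ker P_1)^\perp$ of condition (ii) of Theorem~\ref{thm:epi-rec}, which part a) of your proposal never invokes.

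There are two ways to close the gap. One is the paper's: since $\Ker P_0=\Im Q_1$ admits both adjoints to its inclusion, Bousfield localisation theory yields a left adjoint $Q_{-1}$ of $P_0$; applying $P_1$ to the gluing triangle $Q_1P_2Q_{-1}X\to Q_{-1}X\to Q_0P_0Q_{-1}X\to\Sigma Q_1P_2Q_{-1}X$ gives $P_2Q_{-1}\iso\Sigma^{-1}$; since $\Im Q_{-1}=\Ker P_1=\Im Q_2$ lies in $(\Ker P_2)^\perp$, on which $P_2$ is fully faithful, this upgrades to $\Sigma Q_{-1}\iso Q_2$, and then $Q_2\iso Q_{-1}\Sigma\dashv\Sigma^{-1}P_0$ by composing the adjunction $Q_{-1}\dashv P_0$ with suspension. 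Alternatively, your direct attack can be repaired by pairing the other way: apply $\T_1(Q_2Y,-)$ to the \emph{other} gluing triangle $Q_1'P_2'X\to X\to Q_0P_0X\to\Sigma Q_1'P_2'X$; the outer terms vanish because $\Im Q_2=\Ker P_1$, $\Im Q_1'=\Ker P_0$, and $\T_1(A,B)=0$ for $A\in\Ker P_1$, $B\in\Ker P_0$ by condition (ii); hence the unit $X\to Q_0P_0X$ induces natural bijections $\T_1(Q_2Y,X)\iso\T_1(Q_2Y,Q_0P_0X)\iso\T(P_0Q_2Y,P_0X)\iso\T(\Sigma Y,P_0X)$, using $P_0\dashv Q_0$ and $P_0Q_2\iso\Sigma$; this natural bijection \emph{is} the desired adjunction. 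Finally, a smaller inaccuracy: your claim that each $Q_n$ is a morphic enhancement because its morphism functor ``is the image of $M$ under the corresponding suspension'' is true only for $n\equiv 0\pmod 3$; for instance the morphism functor of $Q_1$ sends $X$ to $P_2X\to P_1X$, which is not a suspension of $MX$, so for the other residues you must verify condition (ii) of Theorem~\ref{thm:epi-rec} for $Q_n$ directly, as the paper does --- straightforward once the wrap-around adjunction is in hand, but not mere suspension bookkeeping.
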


\begin{proof} a) In the proof of Theorem~\ref{thm:epi-rec}, we have constructed
the adjoint $Q_1$ as $Q_1' (P_1 Q_1')^{-1}$. Since $P_1 Q_1'$ is an equivalence
and $Q_1'$ admits the right adjoint $P_2'$, the functor $Q_1$ admits a right
adjoint $P_2$. We have $\Im Q_1 = \Ker P_0$. Whence a short exact sequence
\[
\xymatrix{
0\ar[r] & \T \ar[r]^{Q_1} & \T_1 \ar[r]^{P_0} & \T \ar[r] & 0
}
\]
and a recollement
\[\begin{tikzcd}
  \T\arrow{rr}[description]{Q_1} &&\T_1
  \arrow[yshift=-1.5ex]{ll}{P_2}
  \arrow[yshift=1.5ex]{ll}[swap]{P_1}
  \arrow{rr}[description]{P_0} &&\T\,.
  \arrow[yshift=-1.5ex]{ll}{Q_0}
  \arrow[yshift=1.5ex]{ll}[swap]{Q_{-1}}
\end{tikzcd}
\]
We need to show that $Q_1\colon \T \to \T_1$ is a morphic enhancement. We have
$\Ker P_2=\Im Q_0$ and for $X\in \T$ and $Y\in \Ker P_1$, we have
\[
\T_1(Q_0 X,Y) \iso \T(X,P_1 Y)=0.
\]
Thus, we have $\Ker P_1 \subseteq (\Ker P_2)^\perp$. We need to check
that $P_2 Q_{-1}$ is an equivalence. For $X\in \T$, we have the triangle
\[
\xymatrix{Q_1 P_2 Q_{-1} X \ar[r] & Q_{-1}X \ar[r] & Q_0 P_0 Q_{-1} X \ar[r] &
\Sigma Q_1 P_2 Q_{-1} X.}
\]
By applying $P_1$ to this triangle we find the triangle
\[
\xymatrix{P_2 Q_{-1} X \ar[r] & 0 \ar[r] & P_0 Q_{-1} X \ar[r] & \Sigma P_2 Q_{-1} X}
\]
and therefore we have $X \iso \Sigma P_2 Q_{-1} X$. Therefore, the functor
$P_2 Q_{-1}\colon \T \to \T$ is an equivalence isomorphic to $\Sigma^{-1}$.
Thus, the functor $Q_1\colon \T \to \T_1$ is a morphic enhancement. By induction,
we get a sequence of adjoints
\[
P_0 \dashv Q_0 \dashv P_1 \dashv Q_1 \dashv \ldots \dashv P_n \dashv Q_n \dashv \ldots\ .
\]
Since our assumption is self-dual, we also get a sequence of adjoints
\[
\ldots \dashv P_{-n} \dashv Q_{-n} \dashv \ldots \dashv P_0 \dashv Q_0.
\]
We have already constructed an isomorphism $P_2 Q_{-1} \iso \Sigma^{-1}$
and we have $\Sigma^{-1} \iso P_2 \Sigma^{-1} Q_2$. Whence an
isomorphism $P_2 \Sigma Q_{-1} \iso P_2  Q_2$. We have
$\Im Q_{-1} = \Ker P_1 \subseteq (\Ker P_2)^\perp$. By the recollement
\[\begin{tikzcd}
  \T\arrow{rr}[description]{Q_0} &&\T_1
  \arrow[yshift=-1.5ex]{ll}{P_{1}}
  \arrow[yshift=1.5ex]{ll}[swap]{P_0}
  \arrow{rr}[description]{P_{2}} &&\T
  \arrow[yshift=-1.5ex]{ll}{Q_{2}}
  \arrow[yshift=1.5ex]{ll}[swap]{Q_{1}}
\end{tikzcd}
\]
we also have $\Im Q_2 =\Ker P_1 \subseteq (\Ker P_2)^\perp$. Since the
restriction of $P_2$ to $(\Ker P_2)^\perp$ is an equivalence, we get an
isomorphism $\Sigma Q_{-1} \iso Q_2$. By induction, we get
$\Sigma Q_n \iso Q_{n+3}$ and by adjunction $\Sigma^{-1} P_n \iso P_{n+3}$
for all integers $n$. 

b) We have seen in the proof of a) that there is a canonical isomorphism
$P_2 Q_{-1} \iso \Sigma^{-1}$. By passing to the left adjoints we get
an isomorphism $Q_1 P_{-1} \iso \Sigma$. As it follows from the proof
of a), we have a recollement
\[\begin{tikzcd}
  \T\arrow{rr}[description]{Q_{-1}} &&\T_1
  \arrow[yshift=-1.5ex]{ll}{P_{0}}
  \arrow[yshift=1.5ex]{ll}[swap]{P_{-1}}
  \arrow{rr}[description]{P_{1}} &&\T\,.
  \arrow[yshift=-1.5ex]{ll}{Q_{1}}
  \arrow[yshift=1.5ex]{ll}[swap]{Q_{0}}
\end{tikzcd}
\]
Thus, for $X\in \T_1$, we have the functorial triangle
\[
\xymatrix{
Q_0 P_1 X \ar[r] & X \ar[r] & Q_{-1} P_{-1} X \ar[r] & \Sigma Q_0 P_1 X.}
\]
By applying $P_0$ to this triangle we get the functorial triangle
\[
\xymatrix{
P_1 X \ar[r] & P_0 X \ar[r] & P_{-1} X \ar[r] & \Sigma P_1 X.}
\]
Since $M$ is essentially surjective, each triangle of $\T$ is isomorphic
to a triangle of this form.

c) This was already shown in the proof of the implication from (ii) to (i) in
Theorem~\ref{thm:epi-rec}.
\end{proof}

We keep the assumptions on $Q_0 \colon\T \to \T_1$.
\begin{defn} \label{def:coherent}
A \emph{standard triangle} is a triangle of $\T$
\[
\xymatrix{P_1 X \ar[r] & P_0 X \ar[r] & P_{-1} X \ar[r] & \Sigma P_0 X}
\]
associated with an object $X$ of $\T_1$. A \emph{coherent morphism
between standard triangles} is a morphism
\[
\xymatrix{
P_1 X \ar[d] \ar[r] & P_0 X \ar[d] \ar[r] & P_{-1} X \ar[d] \ar[r] & \Sigma P_0 X \ar[d]\\
P_1 Y \ar[r] & P_0 Y \ar[r] & P_{-1} Y \ar[r] & \Sigma P_1 Y}
\]
induced by a morphism $X \to Y$ of $\T_1$.
\end{defn}

 The following lemma will
be crucial in checking the axioms of a triangulated category for a completion.

\begin{lem} \label{lemma:octahedron}
Let $f\colon X \to Y$ be a morphism of $\T_1$. Let 
\[
X'=Q_{-1}P_{-1}X \oplus Q_0 P_0 X\oplus Q_1 P_1 X
\]
and let $f'\colon X \to Y\oplus X'$ be the morphism whose components are $f$
and the adjunction morphisms. Let
\[
\xymatrix{
X \ar[r]^-{f'} & Y\oplus X' \ar[r] & Z \ar[r] & \Sigma X}
\]
be a triangle. Then the standard triangle
\[
\xymatrix{
P_1 Z \ar[r] & P_0 Z \ar[r] & P_{-1} Z \ar[r] & \Sigma P_1 Z}
\]
is isomorphic to the mapping cone \cite[Section~1.3]{Ne2001}
over the morphism of triangles
\[
\xymatrix{
P_1 X \ar[d]^{P_1 f} \ar[r] & P_0 X \ar[d]^{P_0 f} \ar[r] & P_{-1} X \ar[d]^{P_{-1}f} \ar[r] &
\Sigma P_1 X \ar[d]^{\Sigma P_1 f} \\
P_1 Y \ar[r] & P_0 Y \ar[r] & P_{-1} Y \ar[r] & \Sigma P_1 Y.}
\]
\end{lem}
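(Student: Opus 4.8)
The plan is to exploit that, in the recollement of Proposition~\ref{prop:properties}(a), every functor $P_n\colon \T_1 \to \T$ is a triangle functor, so that the standard triangle of $Z$ can be computed by applying the $P_n$ to the defining triangle of $Z$. Concretely, I would apply $P_1, P_0, P_{-1}$ to the triangle $X \xrightarrow{f'} Y \oplus X' \to Z \to \Sigma X$, obtaining for each $n$ a triangle
\[ P_n X \xrightarrow{P_n f'} P_n Y \oplus P_n X' \to P_n Z \to \Sigma P_n X. \]
The connecting maps $P_1 Z \to P_0 Z \to P_{-1} Z \to \Sigma P_1 Z$ are the components of the functorial standard triangle of $Z$ from Proposition~\ref{prop:properties}(b), so identifying this whole diagram with Neeman's mapping cone is exactly the assertion to be proved.

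Next I would compute the objects $P_n X'$. Using $\Im Q_m = \Ker P_{m+2}$, the identities $P_m Q_m \iso \id \iso P_{m+1} Q_m$, the vanishings $P_{m+2} Q_m = 0 = P_m Q_{m+1}$, and the suspension rule $\Sigma P_n \iso P_{n-3}$ of Proposition~\ref{prop:properties}(a), one evaluates all composites $P_n Q_k$ for $n,k \in \{-1,0,1\}$. This yields $P_1 X' \iso P_0 X \oplus P_1 X$, $P_0 X' \iso P_{-1} X \oplus P_0 X$ and $P_{-1} X' \iso P_{-1} X \oplus \Sigma P_1 X$; in each case one summand is a copy of $P_n X$ coming from the unit $\eta^n\colon \id \to Q_n P_n$ of the adjunction $P_n \dashv Q_n$. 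The crucial point is that the corresponding component of $P_n f'$ is the map $P_n \eta^n_X\colon P_n X \to P_n Q_n P_n X \iso P_n X$, which is an isomorphism by the triangle identity for $P_n \dashv Q_n$. Splitting off the cone of this isomorphism then gives $P_1 Z \iso P_1 Y \oplus P_0 X$, $P_0 Z \iso P_0 Y \oplus P_{-1} X$ and $P_{-1} Z \iso P_{-1} Y \oplus \Sigma P_1 X$, which are precisely the three objects of the mapping cone over the morphism of standard triangles of $X$ and $Y$.

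It then remains to identify the maps. Here I would use the naturality of the standard triangle of Proposition~\ref{prop:properties}(b): each connecting transformation $P_1 \to P_0 \to P_{-1} \to \Sigma P_1$ applied to $X \xrightarrow{f'} Y \oplus X' \to Z$ produces, together with the splittings of the previous step, a morphism of triangles in $\T$. Reading off its matrix under these identifications, the off-diagonal entries are on the one hand the maps $P_n f$ coming from the $f$-component of $f'$, and on the other hand the structure maps $a_X,b_X,c_X$ of the standard triangle of $X$ coming from the adjunction components into $X'$; the diagonal entries are the structure maps of the standard triangle of $Y$. This reproduces exactly the matrices defining Neeman's mapping cone \cite[Section~1.3]{Ne2001}, whence the claimed isomorphism of triangles.

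The main obstacle I anticipate is this last bookkeeping of the maps. One must verify, using the triangle identities and the defining property of $\alpha X$ (namely that $Q_0\,\alpha X$ is the composite of adjunction morphisms $Q_0 P_1 X \to X \to Q_0 P_0 X$), that for $k \neq n$ the component $P_n \eta^k_X$ is, up to the canonical isomorphism $P_n Q_k \iso \id$ or $P_n Q_k \iso \Sigma$, the corresponding structure map of the standard triangle of $X$, and to fix the signs. Since both standard triangles and Neeman's mapping cone are determined only up to isomorphism, the automorphisms introduced by the splittings can be absorbed, so the identification of the three objects is unproblematic and only the commutativity of the connecting maps needs genuine care. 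That Neeman's mapping cone is itself a triangle, so that the target of the claimed isomorphism is legitimate, is guaranteed by the octahedral axiom as in \cite[Section~1.3]{Ne2001}.
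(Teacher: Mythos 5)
Your strategy is correct and is, in substance, the paper's own proof: the entire point of the summand $X'$ is that each $P_nf'$ acquires an invertible component (the unit component $P_n\eta^n_X$), the remaining adjunction components are the structure maps $\alpha X,\beta X,\gamma X$ of the standard triangle of $X$, and $P_nZ$ then splits off as the complementary summand. The only real difference is packaging. The paper unrolls the standard triangles of $X$, $Y$, $X'$, $Z$ and all their suspensions into $\bbZ$-indexed complexes $TX, TY, TX', TZ$ with explicit signs $(-1)^p$, checks that $TX'\iso ITX$ is the contractible complex on $TX$ and that $X\to X'$ induces $i_{TX}$, and then observes that applying $T$ to the defining triangle of $Z$ gives a componentwise split short exact sequence of complexes; since the cokernel of $\smatrix{Tf\\ i_{TX}}\colon TX\to TY\oplus ITX$ is by construction the mapping cone over $Tf$, uniqueness of cokernels of componentwise split monomorphisms yields $TZ\iso$ the mapping cone, in all degrees and with all signs at once. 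That device is exactly what eliminates your anticipated ``main obstacle'': your degree-by-degree computation does go through (take the splittings consistently, namely $P_ng$ restricted to the complementary summands, and use naturality of $\alpha,\beta,\gamma$ together with the relations $P_ng\circ P_nf'=0$), but the cross-degree compatibility and the sign-fixing you would have to do by hand are what the complex formalism automates.

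Two corrections to your side remarks, neither fatal to the plan. First, the matrix shape you predict is garbled: in Neeman's mapping cone the structure maps of \emph{both} standard triangles sit on the diagonal (those of $X$ with a sign), while $P_nf$ and $0$ are the off-diagonal entries; your computation, done correctly, produces this shape, not one with the $X$-structure maps off the diagonal. Second, and more importantly, your closing sentence is false: the mapping cone over an arbitrary morphism of triangles is only a candidate triangle and is in general \emph{not} exact --- this failure is precisely why Neeman introduces good morphisms and the axiom TR4$'$, and the exactness of this particular mapping cone is part of what the lemma establishes (and what makes it usable in the proof of Theorem~\ref{th:morphic-completion}). Fortunately, nothing in your argument requires that exactness a priori: you exhibit an isomorphism of $\Sigma$-sequences between the mapping cone and the standard triangle of $Z$, and exactness of the cone is a consequence of this, not a prerequisite.
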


\begin{proof} For an object $X$ of $\T_1$, let
\[
\xymatrix{
P_1 X \ar[r]^{\alpha X} & P_0 X \ar[r]^{\beta X} & P_{-1} X \ar[r]^{\gamma X} &
\Sigma P_0 X}
\]
be the standard triangle and let $TX$ be the complex obtained by glueing the
following triangles along their boundaries
\[
\xymatrix{
\Sigma^p P_1 X \ar[r]^{\alpha_p X} &
\Sigma^p P_0 X \ar[r]^{\beta_p X} &
\Sigma^p P_{-1} X \ar[r]^{\gamma_p X} &
\Sigma^{p+1} P_1 X}\;, \;\; p\in\mathbb{Z}\;,
\]
where
$\alpha_p X=(-1)^p \Sigma^p \alpha X$, $\beta_p X=(-1)^p \Sigma^p \beta X$
and $\gamma_p X = (-1)^p \Sigma^p \gamma X$. We have to show that
$TZ$ is isomorphic to the mapping cone over the morphism $Tf\colon TX \to TY$.
For a complex $C=(C^p,d^p)$, let $IC$ be the complex with components
$C^p\oplus C^{p+1}$ and the differential
\[
\smatrix{0 & \id \\ 0 & 0} \colon
C^p \oplus C^{p+1} \to C^{p+1}\oplus C^{p+2}
\]
and let $i_C\colon C \to IC$ be the morphism of complexes with the components
\[
\smatrix{\id \\ d^p} \colon C^p \to C^p\oplus C^{p+1}.
\]
One checks easily that $TX'$ is canonically isomorphic to $ITX$ and that
the morphism $X \to X'$ whose components are the adjunction morphisms
induces the morphism $i_{TX}\colon TX \to ITX$. Thus, the morphism 
$f\colon X \to Y\oplus X'$
induces the morphism
\[
\smatrix{Tf \\ i_{TX}}\colon TX \to TY\oplus ITX.
\]
Notice that this is a componentwise split monomorphism whose cokernel
is canonically isomorphic to the mapping cone over $Tf\colon TX \to TY$.
Now the triangle
\[
\xymatrix{
X \ar[r]^-{f'} & Y\oplus X' \ar[r] & Z \ar[r] & \Sigma X}
\]
yields a componentwise split exact sequence
\[
\xymatrix{
0 \ar[r] & TX \ar[r] & T(Y\oplus X') \ar[r] & TZ \ar[r] & 0.}
\]
It follows that $TZ$ is canonically isomorphic to the cone over $Tf$.
\end{proof}

\subsection{Morphic functors, compact objects} 
Let $Q_0\colon \calS \to \calS_1$ and $Q_0\colon \T\to \T_1$
be morphic enhancements. A \emph{morphic functor} $\calS \to \T$ is given by
a square of triangle functors
\[
\xymatrix{\calS \ar[d]_F \ar[r]^{Q_0} & \calS_1 \ar[d]^{F_1} \\
\T \ar[r]^{Q_0} & \T_1}
\]
commutative up to given isomorphism such that the canonical
morphisms
\[
P_0 F_1 \to F P_0 \quad\text{and}\quad FP_1 \to P_1 F_1
\]
are invertible. Examples of morphic functors are provided by
morphisms of epivalent towers of triangulated categories
\cite{Ke1991} and by morphisms of stable derivators \cite{Groth2013}.

\begin{lem} \label{lemma:morphic}
If $F\colon \calS\to\T$ is a morphic functor, we have canonical
isomorphisms
\[
P_n F_1 \iso F P_n \quad\text{and}\quad Q_n F \iso F_1 Q_n
\]
for all integers $n$.
\end{lem}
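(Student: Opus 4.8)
The plan is to bootstrap from the three isomorphisms that a morphic functor supplies for free, namely $Q_0F\iso F_1Q_0$ (the commutativity of the defining square) together with the invertible canonical morphisms $P_0F_1\iso FP_0$ and $FP_1\iso P_1F_1$, and then to propagate these along the whole ladder of adjoints. The two tools are the periodicity $\Sigma Q_n\iso Q_{n+3}$ and $\Sigma P_n\iso P_{n-3}$ of Proposition~\ref{prop:properties}(a), and the fact that $F$ and $F_1$, being triangle functors, commute with $\Sigma$. Throughout I write $\theta_n\colon FP_n\iso P_nF_1$ for the isomorphism to be constructed, which for $n=0,1$ is given.

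First I would settle the $\theta_n$ for all $n$. Since $F\Sigma\iso\Sigma F$ and $F_1\Sigma\iso\Sigma F_1$, the relation $\Sigma P_n\iso P_{n-3}$ lets one shift a given $\theta_n$ by $3$ in either direction, so from $n=0,1$ one already obtains all $n\equiv 0,1\pmod 3$. To reach the last residue class it suffices to treat one further value, say $n=-1$. For this I would apply $F$ to the functorial standard triangle of Proposition~\ref{prop:properties}(b),
\[
P_1X\to P_0X\to P_{-1}X\to\Sigma P_1X,
\]
and compare it with the same triangle attached to $F_1X$. The isomorphisms $\theta_1$ and $\theta_0$ identify the first two terms; once one checks that these identifications intertwine the connecting maps (which are adjunction morphisms), one has a morphism of triangles, and invertibility on two of the three terms forces invertibility on the third, yielding $\theta_{-1}\colon FP_{-1}\iso P_{-1}F_1$. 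Periodicity then gives $\theta_n$ for every $n$.

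With all the $\theta_n$ in hand I would produce the isomorphisms $Q_nF\iso F_1Q_n$ by mate calculus. For each $n$ the adjunctions $P_n\dashv Q_n$ in $\calS$ and in $\T$ turn $\theta_n$ into a canonical mate $\beta_n\colon F_1Q_n\to Q_nF$, assembled from the unit of $P_n\dashv Q_n$ on the target side, $\theta_n^{-1}$, and the counit on the source side. To see $\beta_n$ is invertible I would test it against the pair $\{P_n,P_{n+1}\}$, which is jointly conservative: by Proposition~\ref{prop:properties}(a) one has $\Ker P_n=\Im Q_{n+1}$, while $P_{n+1}$ restricts to an equivalence on $\Im Q_{n+1}$ (since $P_{n+1}Q_{n+1}\iso\id$ as $Q_{n+1}$ is fully faithful), so an object killed by both $P_n$ and $P_{n+1}$ vanishes. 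Now $P_n\beta_n$ and $P_{n+1}\beta_n$ reduce to identity morphisms: using the counit isomorphism $P_nQ_n\iso\id$ and the unit isomorphism $P_{n+1}Q_n\iso\id$ (both from full faithfulness of $Q_n$), together with $\theta_n$ respectively $\theta_{n+1}$, the two composites collapse by the triangle identities. Hence $\beta_n$ is an isomorphism for all $n$.

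The step I expect to be the main obstacle is the compatibility check in the second paragraph: that $\theta_0$ and $\theta_1$ intertwine the adjunction morphism $P_1X\to P_0X$ with $P_1F_1X\to P_0F_1X$, so that the comparison is genuinely a morphism of triangles. This is a diagram chase unwinding the mates back to the given isomorphism $Q_0F\iso F_1Q_0$ and the (co)units of the recollement \eqref{recollement1}; it is precisely here that one uses the naturality of the canonical morphisms, and not merely their existence. Everything else is formal manipulation of adjunctions and of the periodicity established in Proposition~\ref{prop:properties}.
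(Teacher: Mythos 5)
Your toolkit (mates, joint conservativity, the triangle argument) is the right one, but the order in which you deploy it creates a genuine gap at the crucial step, the construction of $\theta_{-1}$. Completing the morphism of standard triangles gives, for each object $X$, \emph{some} isomorphism $FP_{-1}X\iso P_{-1}F_1X$, but fill-in morphisms in a triangulated category are neither unique nor functorial, so this does not produce a natural transformation, let alone a canonical one --- and the lemma asserts canonical isomorphisms of functors. Nor can you repair this by writing down a canonical candidate first: by mate calculus under $P_{-1}\dashv Q_{-1}$, a canonical morphism $P_{-1}F_1\to FP_{-1}$ corresponds to a transformation $F_1Q_{-1}\to Q_{-1}F$, and the only canonical transformation available at that level from the data of a morphic functor is the mate $Q_{-1}F\to F_1Q_{-1}$ of $(P_0F_1\iso FP_0)^{-1}$, which points the wrong way until it has been shown to be invertible. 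This is why the paper's proof runs in the opposite order to yours: it first proves that the canonical morphism $Q_{-1}F\to F_1Q_{-1}$ is invertible --- by applying $P_0$ (a commutative square of isomorphisms) and $P_1$ (a morphism between zero objects, since $P_1Q_{-1}=0$ and $P_1F_1Q_{-1}\iso FP_1Q_{-1}=0$) and invoking conservativity of the epivalence $M$ --- and only then forms the mate $P_{-1}F_1\to FP_{-1}$ and runs your triangle argument, where the pre-existing canonical morphism supplies the natural fill-in and the triangle merely certifies its invertibility. The induction is then closed by observing that $Q_{-1}$ is again a morphic enhancement, with duality covering $n\ge 0$; your periodicity trick is a fine alternative for the propagation, but the single-step construction is where the substance lies.

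There is a second, related problem: your claim that $P_{n+1}\beta_n$ "collapses by the triangle identities" is not justified. Unwinding the mate, $P_{n+1}\beta_n$ is, up to the canonical isomorphisms, the structural morphism $P_{n+1}F_1Q_nX\to P_nF_1Q_nX$ of the epivalence (the image under $P_{n+1}$ of the unit of $P_n\dashv Q_n$, which is not invertible), and its invertibility requires the compatibility of the pair $(\theta_{n+1},\theta_n)$ with the adjunction units and counits --- exactly the coherence that is automatic when all the $\theta$'s are mates of a single $Q$-level isomorphism, but which is unavailable for your independently constructed $\theta$'s. Your computation for $P_n\beta_n$ is correct, and so is the joint conservativity of $\{P_n,P_{n+1}\}$ (it is just conservativity of $M$ for the enhancement $Q_n$); a workable substitute for the defective half is to test $\beta_n$ against the recollement-conservative pair $(P_n,P_{n+2})$ instead, since $P_{n+2}\beta_n$ is a morphism between zero objects: $P_{n+2}Q_n=0$ and $P_{n+2}F_1Q_n\iso FP_{n+2}Q_n=0$ via $\theta_{n+2}$. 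That is precisely the pattern of the paper's argument one level down.
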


\begin{proof} Since $Q_{-1}\colon \calS \to \calS_1$ and $Q_{-1}\colon\T\to \T_1$ 
are again morphic enhancements, it is enough to show the claim for
$Q_{-1}$ and $P_{-1}$. Indeed, by induction it will then follow for
$Q_n$ and $P_n$ for all $n<0$ and by duality for all $n\geq 0$. The image
of the canonical morphism $Q_{-1} F \to F_1 Q_{-1}$ under $P_0$ fits
into the commutative square
\[
\xymatrix{
F \ar[d]_\sim \ar[r]^-{\sim} & F P_0 Q_{-1} \ar[d]^\sim \\
P_0 Q_{-1} F \ar[r] & P_0 F_1 Q_{-1}.}
\]
Thus, it is invertible. The image of $Q_{-1} F \to F_1 Q_{-1}$ under $P_1$ is
the identity of the zero object. Since $M$ is conservative, it follows that
the morphism $Q_{-1} F \to F_1 Q_{-1}$ is invertible. Now consider the
canonical morphism
\[
P_{-1} F_1 \to F P_{-1}.
\]
For each $X$ of $\calS_1$, it fits into a morphism of triangles
\[
\xymatrix{
P_1 F_1 X \ar[r] \ar[d]^\sim & P_0 F_1 X \ar[r]\ar[d]^\sim &
P_{-1} F_1 X \ar[d] \ar[r] & \Sigma P_1 F_1 X \ar[d]^\sim\\
FP_1 X \ar[r] & FP_0 X \ar[r] & FP_{-1} X \ar[r] & \Sigma FP_1 X.}
\]
Thus, it is invertible. 
\end{proof}

The inclusion of the subcategory of compact objects in a compactly
generated triangulated category with a morphic enhancement is an
example of a morphic functor, as shown by the next lemma.

\begin{lem} \label{lemma:compact}
Let $Q_0\colon\T \to \T_1$ be a morphic enhancement. Then
$\T$ is compactly generated if and only if $\T_1$ is compactly generated.
In this case, an object $X$ of $\T_1$ is compact if and only if
$P_0 X$ and $P_1 X$ are compact and
the functor $Q_0$ induces a morphic enhancement 
$\T^c \to \T_1^c$, where $\T^c$ is the subcategory of compact objects.
Moreover, the inclusion $\T^c \to \T$ is a morphic functor.
\end{lem}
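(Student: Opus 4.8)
The plan is to reduce every assertion to a single structural fact: in the adjoint chain $\cdots \dashv P_n \dashv Q_n \dashv P_{n+1} \dashv Q_{n+1} \dashv \cdots$ of Proposition~\ref{prop:properties}~a), \emph{every} functor preserves compact objects. Indeed, each $P_n$ has right adjoint $Q_n$, and $Q_n$ is itself a left adjoint (of $P_{n+1}$), hence preserves coproducts; the standard computation $\Hom(P_nC,\coprod_iY_i)\iso\coprod_i\Hom(P_nC,Y_i)$ then shows $P_n$ carries compacts to compacts, and the symmetric argument applies to each $Q_n$ (whose right adjoint $P_{n+1}$ is the left adjoint of $Q_{n+1}$). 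In particular $P_0,P_1,Q_{-1},Q_0,Q_1$ all restrict to the subcategories of compact objects, and the existence of coproducts transfers between $\T$ and $\T_1$ through the recollement \eqref{recollement1}, so that ``compactly generated'' is meaningful on both sides. Granting this, the characterisation of compact objects is immediate: if $X\in\T_1^c$ then $P_0X,P_1X$ are compact; conversely, if $P_0X$ and $P_1X$ are compact, the functorial triangle \eqref{eq:functorial-triangle}
\[
X \lto Q_0P_0X\oplus Q_1P_1X \lto Q_1P_0X \lto \Sigma X
\]
exhibits $X$ as built from the compact objects $Q_0P_0X$, $Q_1P_1X$, $Q_1P_0X$, so $X\in\T_1^c$ since compacts form a thick subcategory.

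Next I would prove the equivalence of compact generation. If $\T$ is compactly generated by a set $\G$, then $\{Q_{-1}G,Q_0G\mid G\in\G\}$ consists of compact objects of $\T_1$, and the adjunctions $Q_{-1}\dashv P_0$ and $Q_0\dashv P_1$ give, for every $X\in\T_1$,
\[
\Hom(\Sigma^nQ_{-1}G,X)\iso\Hom(\Sigma^nG,P_0X),\qquad
\Hom(\Sigma^nQ_0G,X)\iso\Hom(\Sigma^nG,P_1X).
\]
Vanishing of all these forces $P_0X=0=P_1X$, whence $X=0$ by \eqref{eq:functorial-triangle}; so $\T_1$ is compactly generated. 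Conversely, if $\T_1$ is compactly generated and $X\in\T$ is right orthogonal to all shifts of all compacts of $\T$, then for any compact $D\in\T_1$ the object $P_0D$ is compact in $\T$, so $\Hom(\Sigma^nD,Q_0X)\iso\Hom(\Sigma^nP_0D,X)=0$; hence $Q_0X$ is right orthogonal to $\T_1^c$ and therefore $Q_0X=0$, and $X=0$ by faithfulness of $Q_0$.

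Finally I would treat the morphic statements. Restricting $P_0\dashv Q_0\dashv P_1$ to compacts is legitimate by the first paragraph, yielding a fully faithful triangle functor $Q_0\colon\T^c\to\T_1^c$ with left adjoint $P_0$ and right adjoint $P_1$. To see it is a morphic enhancement I would check condition~(i) of Theorem~\ref{thm:epi-rec}, that $M\colon\T_1^c\to\M(\T^c)$, $X\mapsto(\alpha X\colon P_1X\to P_0X)$, is an epivalence. Conservativity and the exact sequence
\[
\Hom(\Sigma P_1X,P_0Y)\lto\T_1(X,Y)\lto\M\T(MX,MY)\lto 0
\]
of Proposition~\ref{prop:properties}~c), giving fullness, involve only the same $\Hom$-groups and so pass to $\T^c$ verbatim. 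For essential surjectivity, a morphism $f\colon A_1\to A_0$ in $\T^c$ is realised, exactly as in the last paragraph of the proof of Theorem~\ref{thm:epi-rec}, by the object $Y$ in a triangle
\[
\Sigma^{-1}Q_1A_0\lto Y\lto Q_1A_1\oplus Q_0A_0\lto Q_1A_0;
\]
the outer terms are compact, so $Y\in\T_1^c$ with $MY\iso f$. Thus $Q_0\colon\T^c\to\T_1^c$ is a morphic enhancement, and the inclusion $\T^c\hookrightarrow\T$ is a morphic functor: the square with top and bottom $Q_0$ commutes strictly, and the comparison morphisms $P_0F_1\to FP_0$ and $FP_1\to P_1F_1$ are identities precisely because $P_0$ and $P_1$ send compacts to compacts.

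The main obstacle is conceptual rather than computational: one must recognise that the \emph{entire} adjoint chain preserves compactness, since this one fact is what keeps the structural triangle \eqref{eq:functorial-triangle} and the essential‑surjectivity construction of Theorem~\ref{thm:epi-rec} inside the compact subcategories throughout. The only genuinely delicate point beyond this is the transfer of existence of coproducts across the recollement, which is what makes the phrase ``compactly generated'' simultaneously sensible for $\T$ and $\T_1$.
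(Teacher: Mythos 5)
Your strategy is essentially the paper's strategy: the single structural input is that every functor in the chain $\cdots\dashv P_n\dashv Q_n\dashv P_{n+1}\dashv\cdots$ admits two successive right adjoints, hence preserves coproducts and compactness; compactness of $X$ is deduced from compactness of $P_0X$ and $P_1X$ via a structural triangle (you use \eqref{eq:functorial-triangle}, the paper uses $Q_{-1}P_0X\to X\to Q_1P_1X\to\Sigma Q_{-1}P_0X$; both work, since compacts form a thick subcategory); and the morphic statements follow by restricting the adjunctions and the epivalence $M$ to compact objects. Your explicit verification that $M\colon\T_1^c\to\M\T^c$ is an epivalence --- in particular that the essential-surjectivity construction of Theorem~\ref{thm:epi-rec} has compact outer terms, hence compact total object --- is precisely what the paper compresses into ``it is now clear'', and it is correct.

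There is, however, one genuine gap, which you flag yourself but do not close: the existence of arbitrary coproducts in $\T_1$ when $\T$ is compactly generated. This is not a side issue, because ``$\T_1$ is compactly generated'' presupposes that $\T_1$ has set-indexed coproducts; your orthogonality argument with the compact objects $Q_{-1}G$, $Q_0G$ proves generation only once that is known. (The opposite direction is easy and your argument covers it implicitly: $P_1$ is split by $Q_0$ and preserves colimits, so coproducts in $\T$ arise as $P_1$ of coproducts of lifts.) The paper closes the hard direction as follows. Each $Q_n$ is a left adjoint, so it carries coproducts of $\T$ to coproducts of $\T_1$; hence the full subcategories $\Im Q_0$ and $\Im Q_2$ admit coproducts and their inclusions preserve them. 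By the recollement, every object $X$ of $\T_1$ is an extension of an object of $\Im Q_2$ by an object of $\Im Q_0$; so, given a family $(X_i)$, one takes the triangles $Q_0P_1X_i\to X_i\to Q_2P_2X_i\to\Sigma Q_0P_1X_i$, forms a triangle on the induced morphism $\coprod_i Q_2P_2X_i\to\Sigma\coprod_i Q_0P_1X_i$, and checks by applying $\Hom(-,Z)$ and the five lemma that its middle term is a coproduct of the $X_i$. With this supplement (or an equivalent argument) your proposal is complete and matches the paper's proof.
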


\begin{proof} The localization functor $P_1\colon \T_1 \to \T$ admits two successive
right adjoints. Thus it commutes with arbitrary coproducts and preserves
compactness. Therefore, if $\T_1$ is compactly generated, then so is $\T$.
For the converse implication, note that all the functors $Q_n$
commute with arbitrary coproducts and preserve compactness (for the
same reason). So if $\T$ is compactly generated, so are the
subcategories $\Im Q_0$ and $\Im Q_2$ of $\T_1$ and their inclusions
commute with arbitrary coproducts. Since each object of $\T_1$ is
an extension of an object of $\Im Q_2$ by an object of $\Im Q_0$, it
follows that $\T_1$ has arbitrary coproducts and is generated by
the compact objects in $\Im Q_2$ and $\Im Q_0$. Suppose that
$\T$ and $\T_1$ are compactly generated. The functors $P_1$ and
$P_0$ preserve compactness and so do $Q_{-1}$ and $Q_1$.
If an object $X$ of $\T_1$ has compact $P_0 X$ and $P_1 X$, then it
is itself compact because of the triangle
\[
\xymatrix{
Q_{-1} P_0 X \ar[r] & X \ar[r] & Q_1 P_1 X \ar[r] & \Sigma Q_{-1} P_0 X.}
\] 
Now suppose that $\T$ and $\T_1$ are compactly generated. 
As we have seen, the functor $Q_0\colon \T \to \T_1$ induces a functor 
$Q_0\colon \T^c \to \T_1^c$ and $P_0$ and $P_1$ induce left and right adjoints.
It is now clear that the epivalence $M\colon \T_1 \to \M\T$ induces an
epivalence $M\colon \T_1^c \to \M\T^c$ and that the inclusion 
$\T^c \to \T_1^c$ is a morphic functor.
\end{proof}

\subsection{Completion} \label{ss:completion}
Let $\T$ be a triangulated category
with a morphic enhancement $Q_0 \colon\T \to \T_1$. Let $\X$ be a class
of sequences 
\[
X_0 \to X_1 \to \ldots \to X_p \to \ldots
\]
of $\T$. We assume that the following hold for $\X$:
\begin{itemize}
\item[a)] $\X$ is stable under passage to cofinal sequences i.e. if $(X_p)$
belongs to $\X$ and $(i_p)$ is a strictly increasing sequence of positive
integers, then $(X_{i_p})$ belongs to $\X$.
\item[b)] $\X$ is stable under $\Sigma$ and $\Sigma^{-1}$ i.e. if $(X_p)$ belongs
to $\X$ so do $(\Sigma X_p)$ and $(\Sigma^{-1} X_p)$.
\item[c)] $\X$ is stable under cones i.e. if $f\colon X \to Y$ is a morphism of
sequences of $\X$ and for each $p\geq 0$, the object $Z_p$ of $\T_1$
lifts the object $f_p \colon X_p \to Y_p$ of $\M\T$ and the morphism $Z_p \to Z_{p+1}$
lifts the morphism
\[
\xymatrix{ X_p \ar[r] \ar[d] & Y_p \ar[d] \\
X_{p+1} \ar[r] & Y_{p+1}}
\]
of $\M\T$, then the sequence $(P_{-1} Z_p)$ belongs to $\X$.
\item[d)] $\X$ is \emph{phantomless} i.e. for $X$ and $Y$ in $\X$, we have
\[
{\lim_p}^1 \colim_q \T(X_p, Y_q) =0.
\]
\end{itemize}
Let $\Mod\T$ be the category of additive functors $\T^{\op}\to \Ab$.
For $X\in \T$, let $X^\wedge$ be the functor $\T(?,X)$ represented by $X$.
Define the completion $\widehat{\T_\X}$ to be the full subcategory of
$\Mod\T$ whose objects are the colimits
\[
LX = \colim_p X_p^\wedge
\]
of sequences $X$ of $\X$. Note that by Proposition~\ref{pr:completion}, 
this agrees with the definition in the main text.
Define $\Sigma\colon \widehat{\T_\X} \to \widehat{\T_\X}$
to be the functor induced by $X \mapsto \Sigma X$. For a sequence
$(Z_p)$ of $\T_1$ such that $(P_1 Z_p)$ and $(P_0 Z_p)$ belong to $\X$,
define the \emph{standard triangle} associated with $Z$ to be the 
$\Sigma$--sequence
\[
\xymatrix{
L(P_1 Z_p) \ar[r] & L(P_0 Z_p) \ar[r] & L(P_{-1} Z_p) \ar[r] & \Sigma L(P_1 Z_p).
}
\]
Thus, the standard triangles of $\widehat{\T_\X}$ are exactly the
colimits 
of sequences of coherent morphisms between standard triangles of $\T$,
cf.~Definition~\ref{def:coherent}.  Define a \emph{triangle} of
$\widehat{\T_\X}$ to be a $\Sigma$--sequence isomorphic to a standard
triangle.

\begin{thm}\label{th:morphic-completion}
  Endowed with the suspension functor $\Sigma$ and the above triangles
  the completion $\widehat{\T_\X}$ is a triangulated category.  If
  $\X$ contains the constant sequences consisting of identities only,
  we have a canonical triangle embedding $\T \to \widehat{\T_\X}$.
\end{thm}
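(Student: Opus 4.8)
The plan is to verify the axioms of a triangulated category for $\widehat{\T_\X}$ by reducing each one to a levelwise statement about sequences in $\T_1$, where the morphic enhancement makes the cone construction functorial (Lemma~\ref{lemma:octahedron}), and then transporting it to the completion. The descent relies on Corollary~\ref{co:completion}, which identifies $\widehat{\T_\X}(LX,LY)$ with $\lim_p\colim_q\T(X_p,Y_q)$, together with the phantomless condition~(d), which guarantees that the attendant $\lim^1$-obstructions vanish. As additive preliminaries, since $\widehat{\T_\X}$ is a full subcategory of $\Mod\T$ it is enough to check closure under finite biproducts and existence of a zero object; both follow from (b) and (c), as the pointwise direct sum $(X_p\oplus Y_p)$ is the cone of the zero morphism $\Sigma^{-1}Y\to X$ and the zero sequence is the cone of an identity. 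Finally $\Sigma$ is an automorphism of $\widehat{\T_\X}$ with inverse induced by $\Sigma^{-1}$, by~(b).

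For the existence of cones, I start with a morphism $\phi\colon LX\to LY$. By Lemma~\ref{le:cofinal} and stability under cofinal reindexing~(a), after replacing $X$ and $Y$ by cofinal subsequences I may assume $\phi$ is induced by a morphism of sequences $(f_p)\colon(X_p)\to(Y_p)$. Since $M$ is an epivalence (Theorem~\ref{thm:epi-rec}), each $f_p\in\M\T$ lifts to $Z_p\in\T_1$ with $MZ_p=f_p$, so $P_1Z_p=X_p$ and $P_0Z_p=Y_p$; fullness of $M$ lets me lift the transition squares $f_p\to f_{p+1}$ to morphisms $Z_p\to Z_{p+1}$, yielding a sequence $(Z_p)$. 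Then $(P_1Z_p)=X$ and $(P_0Z_p)=Y$ lie in $\X$, hence $(P_{-1}Z_p)\in\X$ by~(c), and the associated standard triangle $LX\to LY\to L(P_{-1}Z_p)\to\Sigma LX$ completes $\phi$. The trivial triangle $LX\xrightarrow{\id}LX\to 0\to\Sigma LX$ is standard, associated with the constant sequence $(Q_0X)$.

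Rotation follows from Proposition~\ref{prop:properties}: each $Q_n$ is again a morphic enhancement and $\Sigma P_1\iso P_{-2}$, so the rotation of the $Q_0$-standard triangle of $(Z_p)$ is exactly the $Q_{-1}$-standard triangle of $(Z_p)$; as $\widehat{\T_\X}$ and $\X$ are independent of the chosen enhancement, rotations of standard triangles are standard, and conversely via $Q_1$. For the fill-in axiom, given a commutative square between the base maps of two standard triangles coming from $(Z_p)$ and $(Z'_p)$, I would, after a further cofinal reindexing, represent the verticals by morphisms of sequences whose squares commute levelwise in $\T$; these are morphisms $MZ_p\to MZ'_p$ in $\M\T$, lift through the full functor $M$ to $Z_p\to Z'_p$, and induce on $(P_{-1}Z_p)$ the required third map, commutativity being the coherence of Definition~\ref{def:coherent}. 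The lifts may differ across $p$ by elements of $\ker M$, which by Proposition~\ref{prop:properties}(c) lie in the image of $\T(\Sigma P_1Z_p,P_0Z'_q)$; since $(\Sigma X_p)\in\X$ by~(b), condition~(d) makes the relevant $\lim^1_p$ vanish, so the lifts assemble into a genuine morphism of the completion.

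The octahedral axiom is the main obstacle, and the reason the enhancement is indispensable: Lemma~\ref{lemma:octahedron} exhibits the cone of a morphism $f\colon X\to Y$ of $\T_1$ functorially, as the mapping cone of the coherent morphism of standard triangles $Tf\colon TX\to TY$. Carrying this out levelwise for a composable pair produces a compatible octahedron of sequences in $\T_1$; applying $L$ and invoking the same phantomless descent as above yields the octahedron in $\widehat{\T_\X}$. For the embedding $\T\to\widehat{\T_\X}$ sending $X$ to its constant sequence, it is defined because $\X$ contains the constant identity sequences, fully faithful by Corollary~\ref{co:completion}, and a triangle functor because every triangle of $\T$ is by Proposition~\ref{prop:properties}(b) the standard triangle of some $Z\in\T_1$, whose image is the standard triangle of the constant sequence $(Z)$. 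Throughout, the two genuinely delicate points are the descent of levelwise lifts through the colimit in the fill-in and octahedral axioms; in each, Lemma~\ref{lemma:octahedron} supplies functoriality of cones while condition~(d) eliminates the $\lim^1$-obstruction to compatibility.
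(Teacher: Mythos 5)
Your treatment of TR0/TR1 and of the fill-in axiom is sound and essentially coincides with the paper's: after cofinal reindexing and passing through the equivalence $\widehat{(\M\T)_{\M\X}}\simeq\M\widehat{\T_\X}$ of Example~\ref{example:morphism-category}, one lifts a morphism of sequences in $\M\T$ to a sequence of morphisms in $\T_1$, the obstruction being killed by the exact sequence of part c) of Proposition~\ref{prop:properties} together with phantomlessness and right-exactness of $\lim^1$. The genuine gap is the octahedral axiom. The sentence ``carrying this out levelwise for a composable pair produces a compatible octahedron of sequences in $\T_1$'' is exactly what is \emph{not} available: your $\lim^1$ argument lifts \emph{one} morphism of sequences at a time, whereas an octahedron requires coherent lifts of a composable pair together with all three cones and the comparison maps between them, compatibly in $p$. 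A single floor $\T_1$ of an enhancement does not encode this data (it is the kind of information a second floor $\T_2$ of a tower would provide, cf.\ Section~\ref{ss:completion-morphic-enrich}), and no obstruction argument is offered. The paper avoids the problem entirely: it proves the \emph{strengthened} fill-in statement --- every commutative square between triangles extends to a morphism of triangles \emph{whose mapping cone is again a triangle}, which is where Lemma~\ref{lemma:octahedron} enters, identifying that mapping cone with the standard triangle of the sequence $(W_p)$ constructed there --- i.e.\ Neeman's axiom TR4$'$, and then invokes Proposition~1.4.6 of \cite{Ne2001}, by which TR1--TR3 together with TR4$'$ imply the octahedral axiom. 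Your proof needs this reduction (or a genuinely new coherence argument); note that your fill-in paragraph, as written, does not even record the mapping-cone condition, so TR4$'$ is nowhere established.

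A second, smaller gap is rotation. You argue that the rotation of a standard triangle is the $Q_{-1}$-standard triangle of the same sequence and conclude ``as $\widehat{\T_\X}$ and $\X$ are independent of the chosen enhancement, rotations of standard triangles are standard.'' The \emph{category} $\widehat{\T_\X}$ is independent of the enhancement, but the class of distinguished triangles you defined is not obviously so: it consists of colimits of coherent morphisms of $Q_0$-standard triangles. Levelwise in $\T$ each rotated triangle is indeed isomorphic to the $Q_0$-standard triangle of some object $Z'_p$ lifting $\beta Z_p\colon P_0Z_p\to P_{-1}Z_p$, but to recognise the rotated \emph{colimit} as a triangle of $\widehat{\T_\X}$ you must choose the $Z'_p$ and their transition morphisms coherently and identify $P_{-1}Z'_p\cong\Sigma P_1Z_p$ compatibly in $p$ --- the same coherence problem again, not a formal invariance statement. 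In the paper TR2 comes for free from the strengthened fill-in: apply it to the morphism from a given triangle to the zero triangle, whose mapping cone is precisely the rotation. So the correct architecture is: TR0, TR1, strengthened fill-in (via the $\lim^1$ lifting you already have plus Lemma~\ref{lemma:octahedron}), and then TR2, TR3, TR4 all at once from \cite{Ne2001}.
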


\begin{proof} Let $X$ be a sequence in $\X$. We need to show that
\[
\xymatrix{
LX \ar[r]^{\id} & LX \ar[r] & 0 \ar[r] & \Sigma LX
}
\]
is a triangle (TR0). In fact, it is the standard triangle associated with
the sequence $(Q_0 X_p)$. Let $X$ and $Y$ be sequences in $\X$ and
let $f\colon LX \to LY$ be a morphism. After passing to cofinal sequences we
may assume that $f$ is in fact a morphism of sequences. We lift each
object $f_p\colon X_p \to Y_p$ of $\M\T$ to an object $Z_p$ of $\T_1$ and
each morphism
\[
\xymatrix{
X_p \ar[d] \ar[r]^{f_p} & Y_p \ar[d]\\
X_{p+1} \ar[r]^{f_{p+1}} & Y_{p+1}}
\]
to a morphism $Z_p\to Z_{p+1}$ of $\T_1$. Then the standard triangle
associated with $(Z_p)$ yields a triangle whose first morphism identifies
with $f\colon LX \to LY$ and we have proved TR1. Let
\[
\xymatrix{
LX \ar[d] \ar[r] & LY \ar[r] \ar[d] & LZ \ar[r] & \Sigma LX \ar[d]\\
LX' \ar[r] & LY' \ar[r] & LZ' \ar[r] & \Sigma LX'}
\]
be a commutative diagram of $\widehat{\T_\X}$ whose rows are triangles.
We will show that there is a morphism $LZ \to LZ'$ completing the diagram
to a morphism of triangles whose mapping cone is still a triangle.
This implies the rotation axiom TR2 (take $LX'=LY'=LZ'=0$), the
axiom about the missing morphism TR3 and axiom TR4' of Section~1.4
of \cite{Ne2001}. By Proposition~1.4.6 of \cite{Ne2001}, the octahedral
axiom TR4 follows. We may assume that the given triangles are the standard
triangles associated with sequences $U$ and $V$ of $\T_1$. The given
commutative diagram
\[
\xymatrix{
LP_1 U \ar[r] \ar[d] & LP_0 U \ar[d] \\
LP_1 V \ar[r] & LP_0 V}
\]
is a morphism in $\M\widehat{\T_\X}$. As in 
Example~\ref{example:morphism-category}, we see that the canonical functor
$\widehat{(\M\T)_{\M\X}} \to \M\widehat{\T_\X}$ is an equivalence, where
$\M\X$ is the class of sequences of morphisms $(X_{1p}\to X_{0p})$
with $(X_{1p})$ and $(X_{0p})$ in $\X$. 
So the given commutative diagram may be viewed as a morphism
\[
LMU \to LMV
\]
in $\widehat{(\M T)_{\M\X}}$. We claim that it suffices to lift it
to a morphism of sequences $U \to V$ of $\T_1$. Indeed, once we have
such a lift, we can form triangles
\[
\xymatrix{
U_p \ar[r] & V_p \oplus U_p' \ar[r] & W_p \ar[r] & \Sigma U_p}
\]
as in Lemma~\ref{lemma:octahedron} and morphisms of triangles
\[
\xymatrix{
U_p \ar[d]\ar[r] & V_p \oplus U_p' \ar[d]\ar[r] & W_p \ar[d]\ar[r] & \Sigma U_p\ar[d] \\
U_{p+1} \ar[r] & V_{p+1} \oplus U'_{p+1} \ar[r] & W_{p+1} \ar[r] & \Sigma U_{p+1}.}
\]
By Lemma~\ref{lemma:octahedron}, the standard triangle associated with $W$
will be the mapping cone over the morphism of standard triangles associated
with $U \to V$. The given morphism $LMU\to LMV$ identifies with an element of 
\[
\lim_p \colim_q \M\T(MU_p, MV_q).
\]
It suffices to show that the natural map
\[
\lim_p \colim_q \T_1(U_p, V_q) \to \lim_p \colim_q \M\T(MU_p,MV_q)
\]
is surjective. By part c) of Proposition~\ref{prop:properties}, for each $p\geq 0$,
we have an exact sequence
\[
\colim_q \T(\Sigma P_1 U_p, P_0 V_q) \to \colim_q \T_1(U_p, V_q) \to
\colim_q \M\T(MU_p, MV_q) \to 0.
\]
Let us abbreviate it to
\[
A_p \to B_p \to C_p \to 0.
\]
Let $A'_p$ be the image of $A_p$ in $B_p$. We have the exact sequence
\[
0 \to \lim A'_p \to \lim B_p \to \lim C_p \to {\lim}^1 A'_p.
\]
Since $\lim^1$ is right exact, we have a surjection $\lim^1 A_p\to \lim^1 A'_p$.
Since $\X$ is phantomless, the group $\lim^1 A_p$ vanishes. So
\[
\lim B_p \to \lim C_p
\]
is surjective as required.
\end{proof}

\subsection{Functoriality} The construction of the completion is functorial
with respect to morphic triangle functors. Let us spell this out: Let $\T$ and $\T'$
be skeletally small triangulated categories and $Q_0 \colon \T \to \T_1$ and
$Q_0\colon \T' \to \T'_1$ be morphic enhancements. Let $F\colon\T \to \T'$ be a
morphic triangle functor with enhancement $F_1\colon \T_1 \to \T'_1$.
Let $\X$ and $\X'$ be classes of sequences of $\T$ and $\T'$ satisfying
the assumptions of Section~\ref{ss:completion} and such that $F\X \subseteq \X'$.

\begin{lem} The functor $F$ induces a canonical triangle functor
$\widehat{F}\colon \widehat{\T_\X} \to \widehat{\T'_{\X'}}$.
\end{lem}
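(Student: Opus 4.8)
The plan is to build $\widehat F$ directly from the Hom-description of the completion and to defer all use of the enhancement $F_1$ to the verification that triangles are preserved. On objects I set $\widehat F(LX)=L(FX)$, where $FX$ denotes the sequence $(FX_p)_{p\ge0}$; since $F\X\subseteq\X'$ this sequence lies in $\X'$, so $L(FX)$ is a genuine object of $\widehat{\T'_{\X'}}$. On morphisms I use the identification $\widehat{\T_\X}(LX,LY)\cong\lim_p\colim_q\T(X_p,Y_q)$ from Proposition~\ref{pr:completion}: the functor $F$ induces compatible maps $\T(X_p,Y_q)\to\T'(FX_p,FY_q)$, hence a map
\[\lim_p\colim_q\T(X_p,Y_q)\lto\lim_p\colim_q\T'(FX_p,FY_q),\]
which I take to be $\widehat F$ on morphisms. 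Compatibility with composition and identities, and additivity of $\widehat F$, follow at once from the corresponding properties of $F$ together with the functoriality of $\lim$ and $\colim$. Note that only the triangle functor $F$ and the hypothesis $F\X\subseteq\X'$ enter here.

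Next I would produce the natural isomorphism $\widehat F\Sigma\iso\Sigma\widehat F$ making $\widehat F$ compatible with suspension. The suspension on $\widehat{\T_\X}$ is induced termwise by $X\mapsto\Sigma X$, and $F$, being a triangle functor, carries a natural isomorphism $F\Sigma\iso\Sigma F$; applying $L$ termwise gives $\widehat F(\Sigma LX)=L(F\Sigma X)\iso L(\Sigma FX)=\Sigma\widehat F(LX)$, naturally in $X$.

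The main step, and the one I expect to be the real obstacle, is to show that $\widehat F$ sends triangles to triangles; it is here that the morphic enhancement is indispensable. By definition it suffices to treat a standard triangle, which is associated with a sequence $(Z_p)$ in $\T_1$ such that $(P_1 Z_p)$ and $(P_0 Z_p)$ lie in $\X$, and reads
\[L(P_1 Z_p)\lto L(P_0 Z_p)\lto L(P_{-1} Z_p)\lto\Sigma L(P_1 Z_p).\]
Applying $\widehat F$ termwise yields the $\Sigma$-sequence with terms $L(FP_i Z_p)$ for $i=1,0,-1$, all of which are objects of $\widehat{\T'_{\X'}}$ since $FP_i Z_p=F(P_i Z_p)\in F\X\subseteq\X'$. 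Now I invoke Lemma~\ref{lemma:morphic}: the morphic functor $F$ comes equipped with canonical isomorphisms $P_n F_1\iso FP_n$ for all $n$, which are natural and hence survive the passage to $L(-)$. They identify the above $\Sigma$-sequence with
\[L(P_1 F_1 Z_p)\lto L(P_0 F_1 Z_p)\lto L(P_{-1} F_1 Z_p)\lto\Sigma L(P_1 F_1 Z_p),\]
that is, with the standard triangle of $\widehat{\T'_{\X'}}$ associated with the sequence $(F_1 Z_p)$ in $\T'_1$; this is a legitimate standard triangle because $P_i F_1 Z_p\cong FP_i Z_p\in F\X\subseteq\X'$ for $i=0,1$. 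Hence the image $\Sigma$-sequence is isomorphic to a standard triangle, so it is a distinguished triangle by Theorem~\ref{th:morphic-completion}. The only delicate point is to confirm that the three termwise isomorphisms assemble into an isomorphism of $\Sigma$-sequences, i.e.\ that they are compatible with the connecting morphisms of the two standard triangles; this is exactly the naturality clause of Lemma~\ref{lemma:morphic}. With this, $\widehat F$ preserves triangles and is therefore the desired canonical triangle functor.
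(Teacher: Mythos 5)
Your proposal is correct and is essentially the paper's own argument: the paper disposes of this lemma with the single remark that it is a straightforward verification based on the compatibility of $(F,F_1)$ with all the adjoints (Lemma~\ref{lemma:morphic}), and your proof simply carries out that verification, with object/morphism definitions via the $\lim\colim$ description and triangle preservation resting on the isomorphisms $P_nF_1\iso FP_n$ exactly as intended. One harmless imprecision: the compatibility of these isomorphisms with the connecting morphisms $\alpha$, $\beta$, $\gamma$ of standard triangles is not literally the naturality of Lemma~\ref{lemma:morphic} in the object of $\T_1$, but rather the morphism of triangles constructed in that lemma's proof, which supplies precisely the commutativity you need.
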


\begin{proof} This is a straightforward verification based on the fact that
$(F,F_1)$ is compatible with all the adjoints as checked in 
Lemma~\ref{lemma:morphic}.
\end{proof}

\subsection{Completion inside a compactly generated category}
\label{ss:morphic-compact-completion}

Let $\T$ be a compactly generated triangulated category with a morphic
enhancement $Q_0\colon \T \to \T_1$. By Lemma~\ref{lemma:compact}, we
have an induced morphic enhancement $Q_0\colon\T^c \to \T_1^c$ between
the subcategories of compact objects and
the inclusion $\T^c \to \T$ is morphic. Let $\X$ be a class of sequences
of $\T^c$ satisfying the hypotheses of Section~\ref{ss:completion}.
For a sequence $X$ in $\X$, we define $\hocolim_p X_p$ as
in Section~\ref{s:seq-completion}. As we have seen there, the facts
that each sequence $X\in\X$ is formed by compact objects and
that $\X$ is phantomless and stable under $\Sigma$
imply that for $X$, $Y$ in $\X$, we have a canonical bijection
\[
\widehat{\T^c_\X}(LX,LY) =\lim_p \colim_q \T^c(X_p,Y_q) \iso 
\T(\hocolim X, \hocolim Y).
\]
Thus we have a fully faithful functor
\[
F\colon \widehat{\T^c_\X} \to \T
\]
taking $LX$ to $\hocolim X$.
Clearly, $F$ is endowed with a canonical isomorphism $F\Sigma \iso \Sigma F$.

\begin{lem}\label{le:morphic-compact-completion}
  $F$ is a triangle functor.
\end{lem}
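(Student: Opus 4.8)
The plan is to reduce the claim to a single functorial triangle of Proposition~\ref{prop:properties}b) attached to one homotopy colimit formed in $\T_1$. Since $F$ already carries a canonical isomorphism $F\Sigma\iso\Sigma F$, and the triangles of $\widehat{\T^c_\X}$ are by definition the $\Sigma$--sequences isomorphic to standard triangles, it suffices to check that $F$ sends each standard triangle to an exact triangle of $\T$.

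First I would fix a sequence $(Z_p)_{p\ge 0}$ in $\T_1^c$ with $(P_1 Z_p)$ and $(P_0 Z_p)$ in $\X$, noting that assumption (c) of Section~\ref{ss:completion} forces $(P_{-1} Z_p)$ into $\X$ as well. The associated standard triangle of $\widehat{\T^c_\X}$ is $L(P_1 Z_p)\to L(P_0 Z_p)\to L(P_{-1} Z_p)\to\Sigma L(P_1 Z_p)$, and $F$ takes $L(P_i Z_p)$ to $\hocolim_p P_i Z_p$. Because $\T_1$ is compactly generated (Lemma~\ref{lemma:compact}), the homotopy colimit $\bar Z:=\hocolim_p Z_p$ exists in $\T_1$. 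The key observation is that each of $P_1,P_0,P_{-1}$ is a left adjoint in the infinite adjoint sequence of Proposition~\ref{prop:properties}a), hence preserves coproducts, and therefore preserves homotopy colimits (apply the triangle functor to the triangle defining $\bar Z$). This yields natural isomorphisms $P_i\bar Z\iso\hocolim_p P_i Z_p=F(L(P_i Z_p))$ for $i=1,0,-1$.

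Next I would invoke Proposition~\ref{prop:properties}b): the functorial triangle $P_1\bar Z\to P_0\bar Z\to P_{-1}\bar Z\to\Sigma P_1\bar Z$ is exact in $\T$. Its three morphisms are natural in $\bar Z\in\T_1$ (the first two are adjunction morphisms, the third is the composite $P_{-1}\to P_{-2}\iso\Sigma P_1$), so under the identifications $P_i\bar Z\iso\hocolim_p P_i Z_p$ they become the homotopy colimits of the standard--triangle morphisms attached to the individual $Z_p$, which are precisely the $F$--images of the morphisms of the standard triangle of $(Z_p)$. Thus $F$ carries the standard triangle to the exact functorial triangle of $\bar Z$, and since every triangle of $\widehat{\T^c_\X}$ is isomorphic to such a standard triangle, the lemma follows.

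The main obstacle I anticipate is verifying that the connecting morphism is transported correctly: one must check that $F$ applied to $L(P_{-1} Z_p)\to\Sigma L(P_1 Z_p)$ agrees, under the above identifications, with $P_{-1}\bar Z\to\Sigma P_1\bar Z$. This reduces to the naturality of the isomorphism $P_{-2}\iso\Sigma P_1$ in Proposition~\ref{prop:properties} together with the fact that $\Sigma$ commutes with homotopy colimits; it is the one place where the coherence built into the standard triangles is genuinely used, whereas the exactness of the images is otherwise automatic once $P_1,P_0,P_{-1}$ are known to commute with $\hocolim$.
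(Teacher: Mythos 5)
Your proposal is correct and follows essentially the same route as the paper's own proof: form $\bar Z=\hocolim_p Z_p$ in $\T_1$, use that $P_1,P_0,P_{-1}$ commute with coproducts (being left adjoints in the infinite adjoint chain) to identify $P_i\bar Z$ with $\hocolim_p P_iZ_p$, and then match the $F$-image of the standard triangle with the functorial triangle of Proposition~\ref{prop:properties}b) for $\bar Z$. Your additional observations --- that assumption c) puts $(P_{-1}Z_p)$ in $\X$, and the naturality check for the connecting morphism --- merely spell out details the paper leaves as ``easy to see.''
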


\begin{proof} Let 
\begin{equation} \label{eq:standard-triangle}
\xymatrix{
LP_1 X \ar[r] & LP_0 X \ar[r] & LP_{-1} X \ar[r] & \Sigma L P_0 X}
\end{equation}
be the standard triangle associated with a sequence $X$ of $\T_1^c$ such
that $P_0 X$ and $P_1 X$ belong to $\X$. Put
\[
Y=\hocolim_p X_p
\]
in $\T_1$. Using the fact that $P_1$, $P_0$ and $P_{-1}$ commute with
coproducts, it is easy to see that the standard triangle
\[
\xymatrix{
P_1 Y \ar[r] & P_0 Y \ar[r] & P_{-1} Y \ar[r] & \Sigma P_1 Y}
\]
is isomorphic to the image of the triangle (\ref{eq:standard-triangle})
under $F$.
\end{proof}

\subsection{Completions of morphic enhancements}
\label{ss:completion-morphic-enrich}

Let $\T$ be a triangulated category with a morphic enhancement
$Q_0\colon \T\to\T_1$ and let $\X$ be a class of sequences of $\T$
satisfying the assumptions of Section~\ref{ss:completion}. It is
natural to ask whether the triangulated category $\widehat{\T_\X}$
admits a morphic enhancement given by a completion of $\T_1$. Clearly,
the class of sequences $\X_1$ of $\T_1$ needed for this is formed by
the sequences $X$ such that $P_1 X$ and $P_0 X$ belong to $\X$. Let
$Q^j_0\colon \T_1 \to \T_2$, $j=1,2$, be morphic enhancements of $\T_1$ 
such that the categories $\T$, $\T_1$, $\T_2$ together with the 
given functors and their needed adjoints
satisfy the axioms for the first three floors of an epivalent tower of
triangulated categories \cite{Ke1991}. Then it is not hard to show
that $\X_1$ satisfies assumptions a), b), and c) of
Section~\ref{ss:completion}.  We cannot expect that $\X_1$ is
phantomless in general but this is the case in many examples. Indeed,
if $\T$ is the perfect derived category of a right coherent ring
$\Lambda$, then the canonical morphic enhancement for $\T$ is the
perfect derived category $\T_1$ of the ring $\Lambda_1$ of upper
triangular $2\times 2$ matrices over $\Lambda$. Notice that
$\Lambda_1$ is still right coherent. If $\X$ is the class of bounded
Cauchy sequences in $\T$, then $\X_1$ is easily seen to be the class
of bounded Cauchy sequences of $\T_1$. So in this example, $\X_1$ is
still phantomless. We can iterate this process to see that the
epivalent tower associated with the bounded derived category of
$\mod\Lambda$ is the bounded Cauchy completion of the tower associated
with the perfect derived category of $\Lambda$. An analogous statement
holds for the stable derivators, defined on the $2$-category of finite
directed categories, associated with the bounded derived category of
$\mod\Lambda$ and with the perfect derived category of $\Lambda$,
cf.\ the appendix \cite{Ke2007} to \cite{Maltsiniotis2007} for these
derivators.

Let $\M\X$ be the class of sequences $(X_{p1} \to X_{p0})$ of morphisms
of $\T$ such that $(X_{p1})$ and $(X_{p0})$ belong to $\X$.

\begin{lem} $\X_1$ is phantomless in $\T_1$ if and only if $\M\X$ is
phantomless in $\M\T$.
\end{lem}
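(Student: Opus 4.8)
The plan is to run both phantomless conditions through the bifunctorial exact sequence of Proposition~\ref{prop:properties}~c). For $X,Y\in\T_1$ it reads
\[\T(\Sigma P_1X,P_0Y)\lto\T_1(X,Y)\lto\M\T(MX,MY)\lto 0,\]
and writing $I(X,Y)$ for the image of the first map (equivalently, the kernel of $M$ on hom-groups, which is a square-zero ideal) it becomes a short exact sequence $0\to I(X,Y)\to\T_1(X,Y)\to\M\T(MX,MY)\to 0$, natural in $X$ and $Y$. First I would record that, since $M$ is an epivalence, the assignment $X\mapsto MX$ identifies, up to isomorphism, the sequences of $\X_1$ with those of $\M\X$: for $X\in\X_1$ the domain and codomain sequences of $MX$ are $P_1X$ and $P_0X$, both in $\X$, so $MX\in\M\X$; conversely fullness and essential surjectivity of $M$ let one lift any sequence of $\M\X$ to a sequence $X$ with $MX$ isomorphic to it and with $P_1X,P_0X\in\X$, i.e.\ $X\in\X_1$. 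Because phantomlessness depends only on the isomorphism classes of the sequences involved, this matching is all that is needed to compare the two conditions.

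Fixing $X,Y\in\X_1$, I would apply the exact functor $\colim_q$ and abbreviate the resulting inverse systems in $p$ by $I_p:=\colim_qI(X_p,Y_q)$, $B_p:=\colim_q\T_1(X_p,Y_q)$ and $C_p:=\colim_q\M\T(MX_p,MY_q)$, giving a short exact sequence $0\to I_p\to B_p\to C_p\to 0$ of $\bbN$-indexed towers. Its six-term $\lim$--$\lim^{1}$ sequence
\[0\to\lim_pI_p\to\lim_pB_p\to\lim_pC_p\to\sideset{}{^{1}}\lim_pI_p\to\sideset{}{^{1}}\lim_pB_p\to\sideset{}{^{1}}\lim_pC_p\to0\]
is the main tool, with $\sideset{}{^{1}}\lim_pB_p$ measuring phantomlessness of $\X_1$ and $\sideset{}{^{1}}\lim_pC_p$ that of $\M\X$. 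The implication ``$\X_1$ phantomless $\Rightarrow$ $\M\X$ phantomless'' is then immediate: the final segment $\sideset{}{^{1}}\lim_pB_p\twoheadrightarrow\sideset{}{^{1}}\lim_pC_p$ forces $\sideset{}{^{1}}\lim_pC_p=0$ once $\sideset{}{^{1}}\lim_pB_p=0$.

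The reverse implication is where the work lies, and the main obstacle is to show that the square-zero ideal contributes nothing to $\lim^{1}$, i.e.\ that $\sideset{}{^{1}}\lim_pI_p=0$. Assuming $\M\X$ phantomless we get $\sideset{}{^{1}}\lim_pC_p=0$, so the six-term sequence makes $\sideset{}{^{1}}\lim_pI_p\to\sideset{}{^{1}}\lim_pB_p$ surjective, and it remains to kill the left-hand group. Here I would use that $I_p$ is a natural quotient of $\tilde A_p:=\colim_q\T(\Sigma P_1X_p,P_0Y_q)$, so by right exactness of $\lim^{1}$ the group $\sideset{}{^{1}}\lim_pI_p$ is a quotient of $\sideset{}{^{1}}\lim_p\tilde A_p$. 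To see the latter vanishes I would invoke an identity-sequence trick: for sequences $A,B$ in $\X$ the sequences with terms $(\id\colon A_p\to A_p)$ and $(\id\colon B_q\to B_q)$ lie in $\M\X$, and there is a natural isomorphism $\M\T((\id_{A_p}),(\id_{B_q}))\iso\T(A_p,B_q)$; hence $\M\X$ phantomless yields $\sideset{}{^{1}}\lim_p\colim_q\T(A_p,B_q)=0$ for all $A,B\in\X$. Applying this to $A=\Sigma P_1X$ (in $\X$ by stability under $\Sigma$) and $B=P_0Y$ gives $\sideset{}{^{1}}\lim_p\tilde A_p=0$, whence $\sideset{}{^{1}}\lim_pI_p=0$ and therefore $\sideset{}{^{1}}\lim_pB_p=0$. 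This proves $\X_1$ phantomless.

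The only delicate points I anticipate are bookkeeping: verifying that every map is natural in $p$ so that $\colim_q$ really produces short exact sequences of towers, and that the transfer between $\X_1$ and $\M\X$ along $M$ is harmless because the phantomless condition sees only isomorphism classes. The conceptual heart is the single observation that, although $\ker M$ is nonzero, it is sandwiched between hom-groups of $\X$-sequences and is therefore invisible to $\lim^{1}$ exactly when $\M\X$ is phantomless.
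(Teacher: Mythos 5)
Your proof is correct and follows the same skeleton as the paper's: both arguments run the comparison through the bifunctorial exact sequence of Proposition~\ref{prop:properties}~c), pass to the colimit over $q$ to get an inverse system $A_p\to B_p\to C_p\to 0$, and exploit right exactness of ${\lim}^1$. The one genuine difference is how the term ${\lim}^1A_p$, with $A_p=\colim_q\T(\Sigma P_1X_p,P_0Y_q)$, gets killed. The paper simply invokes the standing assumption d) of Section~\ref{ss:completion} that $\X$ is phantomless (legitimate, since $\Sigma P_1X$ and $P_0Y$ lie in $\X$ by stability under suspension), and thereby obtains an isomorphism ${\lim}^1B_p\cong{\lim}^1C_p$ settling both implications at once. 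You instead derive this vanishing from the hypothesis that $\M\X$ is phantomless, via the observation that identity sequences $(\id_{A_p})$ lie in $\M\X$ and that $\M\T(\id_{A_p},\id_{B_q})\cong\T(A_p,B_q)$; in effect you prove the extra statement that $\M\X$ phantomless implies $\X$ phantomless, so your ``if'' direction does not consume assumption d) at all. This buys a marginally stronger and more self-contained result, at the cost of one extra step that is unnecessary in the paper's context. You are also more explicit than the paper about matching $\X_1$ with $\M\X$ through the epivalence $M$ — the paper works only with pairs $(MX,MY)$ for $X,Y\in\X_1$ and leaves the lifting of arbitrary sequences of $\M\X$ implicit — though both arguments share the same mild caveat that lifts realise the component sequences only up to isomorphism, which is harmless because the ${\lim}^1\colim$ condition is an isomorphism invariant.
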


\begin{proof} Let $X$ and $Y$ be in $\X_1$. By part c) of Lemma~\ref{prop:properties},
for all $p,q\geq 0$, we have an exact sequence
\[
\T(\Sigma P_1 X_p, P_0 Y_q) \to \T_1(X_p, Y_q) \to \M\T(MX_p, MY_q) \to 0.
\]
We pass to the colimit over $q$ to get an inverse system of exact sequences
\[
A_p \to B_p \to C_p \to 0.
\]
Since $\lim^1$ is right exact, it induces an exact sequence
\[
{\lim}^1 A_p \to {\lim}^1 B_p \to {\lim}^1 C_p \to 0.
\]
Now since $\X$ is phantomless, the group $\lim^1 A_p$ vanishes.
\end{proof}

\end{document}